\newtheorem{thm}{Theorem}[section]
\newtheorem{obs}[thm]{Observation}
\newtheorem{lemma}[thm]{Lemma}
\newtheorem{theorem}[thm]{Theorem}
\newtheorem{cor}[thm]{Corollary}
\newtheorem{prop}[thm]{Proposition}
\newtheorem{notation}[thm]{Notation}
\newtheorem{remark}[thm]{Remark}
\newtheorem{de}[thm]{Definition}
\newtheorem{example}[thm]{Example}
\newtheorem{problem}[thm]{Problem}
\newcommand{\Gauss}[2]{{\begin{bmatrix} #1 \\ #2 \end{bmatrix}_q}}
\newcommand{\allones}{\mathds{1}}
\newcommand{\charvec}{\mathds{1}}
\newcommand{\EKR}{\mathcal{F}}
\newcommand{\PS}{\mathrm{PS}}
\newcommand{\cF}{\mathfrak{F}}
\renewcommand{\le}{\leqslant}
\renewcommand{\leq}{\leqslant}
\renewcommand{\ge}{\geqslant}
\renewcommand{\geq}{\geqslant}
\newcommand{\gauss}[2]{{#1\brack #2}}
\title{The largest sets of non-opposite chambers in spherical buildings of type $B$}
\author{
  Jan De Beule\thanks{Department of Mathematics and Data Science, Vrije Universiteit Brussel, Pleinlaan 2, B-1050 Brussel, Belgium. \texttt{jan.de.beule@vub.be}} \and
  Philipp Heering\thanks{Justus-Liebig-Universität, Mathematisches Institut, Arndtstraße 2, D-35392 Gießen, Germany. \texttt{philipp.heering@math.uni-giessen.de}} \and
Sam Mattheus\thanks{Department of Mathematics and Data Science, Vrije Universiteit Brussel, Pleinlaan 2, B-1050 Brussel, Belgium. \texttt{sam.mattheus@vub.be}} \and
  Klaus Metsch\footnotemark[2]\thanks{Justus-Liebig-Universität, Mathematisches Institut, Arndtstraße 2, D-35392 Gießen, Germany.  \texttt{klaus.metsch@math.uni-giessen.de}}
}
\begin{document}

\maketitle

\begin{abstract}

The investigation into large families of non-opposite flags in finite spherical buildings has been a recent addition to a long line of research in extremal combinatorics, extending classical results in vector and polar spaces. This line of research falls under the umbrella of Erd\H{o}s-Ko-Rado (EKR) problems, but poses some extra difficulty on the algebraic level compared to aforementioned classical results. From the building theory point of view, it can be seen as a variation of the center conjecture for spherical buildings due to Tits, where we replace the convexity assumption by a maximality condition.

In previous work, general upper bounds on the size of families of non-opposite flags were obtained by applying eigenvalue and representation-theoretic techniques to the Iwahori-Hecke algebras of non-exceptional buildings. More recently, the classification of families reaching this upper bound in type $A_n$, for $n$ odd, was accomplished by Heering, Lansdown, and Metsch. For buildings of type $B$, the corresponding Iwahori-Hecke algebra is more complicated and depends non-trivially on the type and rank of the underlying polar space. Nevertheless, we are able to find a uniform method based on antidesigns and obtain classification results for chambers (i.e.\ maximal flags) in all cases, except type $^2A_{4n-3}$.
\end{abstract}

\textbf{Keywords:}  Erd\H{o}s-Ko-Rado, center conjecture, spherical building, homogeneous coherent configuration, antidesign, design orthogonality

\textbf{MSC(2020):} 
05C69, 
05C50, 
05C35 

\section{Introduction}

The Erd\H{o}s-Ko-Rado (EKR) theorem \cite{OriginalEKR}, proved in 1938 but published only in 1961,  is a foundational result in extremal set theory. It states that for a finite set $X$ of size $n$, the maximum intersecting family of $k$-element subsets of $X$, where $2k+1 \leq n$, is obtained when all subsets share a common element. 
Since then, numerous generalizations and extensions of the problem have been studied across countless combinatorial structures, including vector spaces \cite{ekr_vectorspaces, hsieh}, groups \cite{EKR_permutations_fixed_cycles, stanton_ekr_chevalleygroups}, and various kinds of geometries \cite{adriansen_ekr_circlegeometries, Theorems_of_EKR_type_in_geometrical_settings, metsch_chromaticnumberoftwo}. For an overview, see \cite{EllisSurvey, GodsilMeagher}.
In all of these results, the typical main questions are: \emph{Given a set with a suitable notion of "intersection", how large can a subset of intersecting objects be? What is the structure of the largest possible such set?} \\

An important tool in the study of EKR-problems is the theory of graphs. By constructing a graph $G$ whose vertices are adjacent if the corresponding objects are not intersecting, we can utilize methods from spectral graph theory to study independent sets of the graph. Specifically, the ratio bound \cite{hoffman_bound_haemers}, also known as the Hoffman-Delsarte bound, has been instrumental in determining the maximum possible size of EKR-sets in various settings. In order to effectively apply this method, one needs to have precise control over the eigenvalues of $G$. While this is difficult in general, the EKR-problems cited earlier typically have the advantage that the adjacency matrix of $G$ can be studied by algebraic means: either it is contained in a commutative matrix algebra (known as association schemes), or $G$ is a Cayley graph of a well-understood group. In both cases we have access to well-known tools in order to compute the eigenvalues of $G$. This is not the case in this work. The graph we will work with can be seen as a Schreier coset graph and its adjacency matrix is contained in a non-commutative matrix algebra. 

Not only does this complicate the application of the ratio bound, it moreover implies that known techniques such as the polyhedral, width and linear programming methods are unavailable, see the excellent monograph by Godsil and Meagher \cite{GodsilMeagher} or the introduction of recent work by Filmus and Lindzey \cite{uniqueness_algebraically} for an overview of these techniques. Instances in the literature where classification results are obtained using algebraic methods, despite this extra layer of difficulty, are rare. We only know of the result due to Filmus and Lindzey on partially 2-intersecting perfect hypermatchings \cite[Section 9]{uniqueness_algebraically}, which relied on the representation theory of the symmetric group, and the recent result due to Heering, Lansdown and Metsch for chambers in type $A_n$, $n$ odd \cite{klaus_jesse_philipp}. In fact, our approach for type $B$ is similar to theirs. However, whereas \cite{klaus_jesse_philipp} relied on lengthy combinatorial calculations and an explicit description of the generators of the eigenspace corresponding to the smallest eigenvalue (see \cite{AlgebraicApproach2}) to find antidesigns, we give a streamlined algebraic argument that does not use the description of the eigenspaces.\\

In this paper we will study EKR-problems in the context of buildings, specifically for flags of finite spherical buildings. Here `intersecting' typically refers to the notion of non-oppositeness from building theory. Buildings of type $A$ come from projective spaces, and EKR-problems for specific flags of projective spaces have been studied, see \cite{line-plane-flags, point-hyperplane-flags, point-plane-flags, k-spaces_intersecting_k-2, heering_PG4q, heeringmetsch2023secondmax, plane-solid-flags}. We note that the proofs of all these results, which in most cases deal with partial flags of size at most 2, are combinatorial in nature. The complexity of these combinatorial arguments quickly increases as the partial flags and the ambient space become bigger, which explains the lack of general results. 

A similar situation occurs for buildings of type $B$, whose geometric models are polar spaces. EKR-theorems for subspaces of polar spaces can be found in \cite{boeck_ekr_polarspaces, ihringer_ekr_polarspaces, EKR_polarspace_smallsubspaces,metsch_pairwise-non-opp-lines, EKR_polarspace_generators}, but we alert the reader that different notions of `intersecting' are used. As far as we are aware, all classification results for EKR-sets in type $B$ deal with single-element flags.
Even in this case, algebraic arguments comprise an essential component in most of the proofs since a purely combinatorial or geometrical approach is quickly limited by 
the complicated and numerous possible geometrical relations that arise.

Only recently this problem has been approached from a more algebraic perspective  \cite{AlgebraicApproach,AlgebraicApproach2}, by leveraging known results on the Iwahori-Hecke algebra of spherical buildings in order to apply the ratio bound. The classification problem in type $A_n$, $n$ odd, has been achieved by Heering, Lansdown and Metsch \cite{klaus_jesse_philipp} using the method of antidesigns. The underlying idea behind the proofs in this paper are similar, but very different in execution as we will indicate. The extra difficulty comes from the fact that the geometry of polar spaces is more involved than that of projective spaces, and the non-trivial dependence of the underlying Iwahori-Hecke algebras on the type and rank of the building. \\

A second motivation for this work is the connection to the center conjecture for spherical buildings due to Tits. This conjecture was solved for non-exceptional spherical buildings by M\"uhlherr and Tits \cite{MuhlherrTits06} and for exceptional spherical buildings by Leeb and Ramos-Cuevas \cite{LR11}, and Ramos-Cuevas \cite{RamosCuevas13}.
Roughly speaking, for a spherical building $\Delta$ with associated group of Lie type $G$ (possibly defined over an infinite field), this conjecture states that a convex subset $S$ of chambers in $\Delta$ must either contain a pair of opposite chambers, or $\mathrm{Stab}_G(S)$ fixes a (non-empty) partial flag of $\Delta$. This partial flag is referred to as a center.
Convexity in this setting is defined by considering the building as a simplicial complex, we refer the reader to \cite[1.3]{Titsbook} for more details. Another, more combinatorial formulation of the center in relation to EKR-problems was given by Ihringer, Metsch, and M\"uhlherr \cite{IMM18}. 

Our main results imply that for finite buildings of type $B$, we can replace the assumption of convexity by the requirement that the family of non-opposite chambers attains the ratio bound and still find a center (in either formulation). It is an interesting question whether this implication remains true for smaller maximal families of non-opposite chambers. In other words, what is the threshold up until which we can omit the convexity assumption from the center conjecture for finite buildings? \\

\textbf{Notation and terminology.} We will state our results in the language of finite polar spaces \cite{distance-regular-graphs, Cameron}, which are the geometrical models of spherical buildings of type $B$ defined over a finite field. The finite classical polar spaces are the geometries naturally associated to non-degenerate, respectively non-singular, sesquilinear, respectively quadratic forms on a vector space over a finite field. These geometries are naturally embedded in finite projective spaces. Their {\em elements}, also called {\em subspaces}, are the subspaces of the projective space that are totally isotropic, respectively totally singular, with respect to the sesquilinear, respectively quadratic form. The subspaces of maximal dimension contained in the polar space are called {\em generators}. Throughout this article, the word {\em subspace} will always refer to a subspace of the polar space. The {\em rank of a subspace} is its vector space dimension, a subspace of rank $i$ will be called an {\em $i$-space}. Points are $1$-spaces and generators are $n$-spaces, with $n$ the Witt index of the underlying form. The {\em rank} of a polar space is the rank of its generators. Furthermore, we define the {\em type} $e = \log_q (N-1)$, where $N$ is the number of generators through an $(n-1)$-space. By $\PS(n,e,q)$ we will denote a polar space of rank $n \geq 3$, type $e \in \{0,\frac{1}{2},1,\frac{3}{2},2\}$ and defined over $\mathbb{F}_q$. 

Let $S$ be a subspace of a polar space with underlying vector space $V$, then $S^\perp$ denotes its {\em tangent space}, this is the subspace of $V$ that is orthogonal to $S$ with relation to the underlying form. If $V$ has dimension $d$ and $S$ is an $i$-space, then $S^\perp$ has rank $d-i$.

A \emph{flag} in a polar space $\PS(n,e,q)$ is a set $F$ of nontrivial subspaces such that $U\le W$ or $W\le U$ for all $U,W\in F$ and $\{\dim(U)\mid U\in F\}$ is the \emph{type} of $F$. For $J \subseteq [n] := \{1,2,\ldots,n\}$, we will denote the set of flags of type $J$ by $\cF_J$. If $J=\{s\}$ is a singleton, we simply write $\cF_s$ instead of $\cF_{\{s\}}$. When given a flag $F$ of type $J \subseteq [n]$, we will typically denote its subspace of dimension $i$ by $F_i$. When $J = [n]$, the flag is called a \emph{chamber}.

Two flags $F$ and $F'$ of type $J$ are \emph{opposite}, if $F_i^\perp \cap F'_i ={0}$ (i.e.\ the zero-dimensional subspace), for all $i \in J$.

\begin{de}
    An \emph{EKR-set of flags of type $J$}, $J \subseteq [n]$, in $\PS(n,e,q)$ is a set of flags of type $J$ such that no two are opposite. 
\end{de}

For example, when $J = \{1\}$, an EKR-set of points is exactly a set of pairwise collinear points. It is clear that the maximum EKR-sets of points hence consist of all points in a generator. As far as other examples of EKR-sets of flags in type $B$ go, only the cases $J=\{n\}$ and $J=\{2\}$ have received attention in the literature \cite{metsch_pairwise-non-opp-lines, EKR_polarspace_generators}. For convenience, we record the known families for $J = \{1\}$ and $J = \{n\}$: 

\begin{example}\label{E: EKR for s-spaces}
    The following are examples of EKR-sets of $s$-spaces in $\PS(n,e,q)$, $s \in \{1,n\}$:
    \begin{enumerate}[(a)]
        \item Fix a generator $\pi$. The set of all points contained in $\pi$ is a maximum EKR-set of points and all maximum EKR-sets of points are of this form.
        \item Fix a point $P$. The set of all generators containing $P$ is an EKR-set of generators.
        \item For $e = 0$ and $n$ odd, there are two equally sized classes of generators, often called greeks and latins. The set of generators of one class is a maximum EKR-set of generators. Note that this example is larger than the previous.
        \item For $e = 1$ and $n$ odd, we can find an embedded hyperbolic space of the same rank in a parabolic space (for all $q$) or a symplectic space (for even $q$), and the previous example embeds as a maximum EKR-set of generators in this case. 
    \end{enumerate}
\end{example}

For the examples involving generators, it was shown by Pepe, Vanhove and Storme \cite{EKR_polarspace_generators} that the list above exhausts all maximum EKR-sets of generators when $n \geq 4$, except when $e = 1/2$ and $n$ is odd, in which case we do not even know the correct order of magnitude of the size of a maximum EKR-set. It might be the case that example (b) is also the maximum example in this case for sufficiently large rank. We refer to \cite{EKR_polarspace_generators} for more details. 

Starting from these examples, we can `blow them up' to produce EKR-sets of flags of type $J$, whenever $1 \in J$ or $n \in J$.

\begin{obs} \label{E: blow-ups of s-spaces}
    Let $\EKR_s$ be an EKR-set of $s$-spaces in $\PS(n,e,q)$ with $s \in [n]$. For any $J \subseteq [n]$ such that $s \in J$, we define $\EKR_J$ to be the set of all flags $F$ of type $J$ with $F_s\in \EKR_s$. Then $\EKR_J$ is an EKR-set of flags of type $J$.
\end{obs}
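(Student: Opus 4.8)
The statement to prove is Observation~\ref{E: blow-ups of s-spaces}: that if $\EKR_s$ is an EKR-set of $s$-spaces and $s \in J$, then the set $\EKR_J$ of all flags of type $J$ whose $s$-dimensional component lies in $\EKR_s$ is again an EKR-set (of flags of type $J$).

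This is essentially immediate from unpacking the definitions, so my proof plan is short. Let me sketch it.

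The plan is to take two arbitrary flags $F, F' \in \EKR_J$ and show they are not opposite, by exhibiting a single index $i \in J$ for which $F_i^\perp \cap F_i' \neq \{0\}$. The natural candidate for this index is $i = s$. Since $F, F' \in \EKR_J$, by definition $F_s, F_s' \in \EKR_s$. Because $\EKR_s$ is an EKR-set of $s$-spaces, no two of its members are opposite as (singleton) flags of type $\{s\}$, which by the definition of oppositeness for flags means precisely that $F_s^\perp \cap F_s' \neq \{0\}$. Hence the pair $F, F'$ fails the oppositeness condition already at the index $s$, so $F$ and $F'$ are not opposite as flags of type $J$. Since $F, F'$ were arbitrary, $\EKR_J$ is an EKR-set of flags of type $J$.

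The one point that deserves a sentence of care is well-definedness: every flag $F$ of type $J$ does have a component $F_s$ of dimension $s$ (because $s \in J$), so the defining condition "$F_s \in \EKR_s$" makes sense for every such flag; and any flag $F$ of type $J$ with $F_s \in \EKR_s$ is in particular a genuine flag of type $J$, so $\EKR_J$ really is a set of flags of type $J$. There is no obstacle here — the content is entirely in the observation that oppositeness of flags is defined conjunctively over all coordinates in $J$, so failure at one coordinate suffices to prevent oppositeness. I expect no hard part; this is a warm-up observation whose role is to provide the lower-bound constructions against which the later upper bounds (via the ratio bound) will be matched.

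Concretely, I would write:

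\begin{proof}
Since $s \in J$, every flag of type $J$ possesses a subspace of dimension $s$, so $\EKR_J$ is well defined and consists of flags of type $J$. Let $F, F' \in \EKR_J$; we must show that $F$ and $F'$ are not opposite. By definition of $\EKR_J$ we have $F_s, F_s' \in \EKR_s$. As $\EKR_s$ is an EKR-set of $s$-spaces, the flags $\{F_s\}$ and $\{F_s'\}$ of type $\{s\}$ are not opposite, which by the definition of oppositeness means $F_s^\perp \cap F_s' \neq \{0\}$. Consequently there exists an index, namely $i = s \in J$, with $F_i^\perp \cap F_i' \neq \{0\}$, so $F$ and $F'$ are not opposite as flags of type $J$. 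Hence no two flags in $\EKR_J$ are opposite, i.e.\ $\EKR_J$ is an EKR-set of flags of type $J$.
\end{proof}
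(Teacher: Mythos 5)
Your proof is correct, and it is exactly the argument the paper has in mind: the paper states this as an \emph{Observation} without writing a proof precisely because it follows immediately from the conjunctive definition of oppositeness of flags, as you correctly identify. Nothing to add.
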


Our first main result is that for almost all polar spaces, the maximum EKR-sets of chambers, have the form of \Cref{E: blow-ups of s-spaces} for $s\in \{1,n\}$. In other words, the maximum EKR-sets of chambers are blow-ups of maximum EKR-sets of points or generators. This answers a question due to the first, third and fourth author \cite[Problem 3.19]{AlgebraicApproach2} for the respective values of $e$ and $n$. 

\begin{theorem}\label{thm:main}
    Consider $\PS(n,e,q)$ for $n \geq 3$ except the case $e = 1/2$ and $n$ odd. Then for sufficiently large $q$ (in terms of $n$) a maximum EKR-set of chambers has the structure described in \Cref{E: blow-ups of s-spaces} for $s\in \{1,n\}$.
\end{theorem}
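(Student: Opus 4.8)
The plan is to follow the general scheme used for type $A_n$ in \cite{klaus_jesse_philipp}: first establish that every maximum EKR-set of chambers meets the ratio bound with equality, then use the combinatorial consequences of equality (the ``design-orthogonality'' with antidesigns) to pin down the structure. Concretely, the upper bound on the size of an EKR-set of chambers is obtained by applying the Hoffman--Delsarte ratio bound to the ``oppositeness graph'' $\Gamma$ on the set of chambers $\cF_{[n]}$ of $\PS(n,e,q)$; the adjacency matrix of $\Gamma$ lives in the Iwahori--Hecke algebra of the building, and the relevant smallest eigenvalue and its multiplicity have been computed in the earlier algebraic work \cite{AlgebraicApproach,AlgebraicApproach2}. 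The first step is therefore to recall that bound and to verify that each family exhibited in \Cref{E: blow-ups of s-spaces} for $s\in\{1,n\}$ attains it; for this one counts chambers through a fixed generator versus chambers through a fixed point (and, where relevant, the greek/latin or embedded-hyperbolic refinements from \Cref{E: EKR for s-spaces}(c)--(d)), and compares with the ratio-bound value. This identifies the maximum size and shows the listed examples are extremal; it remains to show there are no others.

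\textbf{From equality to a combinatorial constraint.} The core of the argument is the antidesign/design-orthogonality machinery. Equality in the ratio bound forces the characteristic vector $\chi$ of a maximum EKR-set $\mathcal{S}$ to lie in the span of the all-ones vector and the eigenspace $U_{\min}$ for the smallest eigenvalue. The new idea, replacing the explicit eigenvector computations of \cite{klaus_jesse_philipp}, is to produce, for a suitable sub-configuration of the flag geometry (e.g.\ incidence of chambers with points, or with generators, or with the greek/latin classes), a set of ``antidesigns'' — vectors orthogonal to $U_{\min}$ — purely by an algebraic argument inside the Iwahori--Hecke algebra, without writing down bases for $U_{\min}$. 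Pairing $\chi$ against such antidesigns yields exact linear equations: for instance, that for each point $P$ the number of chambers of $\mathcal{S}$ through $P$ is a fixed constant times an indicator, or an analogous statement for generators. In homogeneous-coherent-configuration language this is precisely the statement that $\mathcal{S}$, viewed through the appropriate quotient, is a ``tight design'' whose block structure is rigid. The upshot of this step is a clean combinatorial description: $\mathcal{S}$ is a union of chambers forced to pass through a common point, or through a common generator, or to lie in a single class of generators.

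\textbf{Deducing the structure and handling the cases.} Once we know $\mathcal{S}$ is, say, ``point-regular'' in the above sense, a short geometric argument over the polar space $\PS(n,e,q)$ — using that chambers are full flags and that an EKR-set of points is a set of pairwise collinear points, hence (by \Cref{E: EKR for s-spaces}(a)) contained in a generator — upgrades this to: $\mathcal{S}=\EKR_{[n]}$ for an EKR-set of points, i.e.\ a blow-up with $s=1$. The generator case is dual and invokes the Pepe--Vanhove--Storme classification \cite{EKR_polarspace_generators} of maximum EKR-sets of generators for $n\geq 4$ (the hypothesis $e=1/2$, $n$ odd is excluded precisely because that classification, and arguably the value of the bound itself, is open there). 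One must separately check small-rank anomalies ($n=3$) and the hyperbolic/parabolic/symplectic coincidences of \Cref{E: EKR for s-spaces}(c)--(d), making sure the two families (through a point vs.\ in a class) are the only equality cases and that ``sufficiently large $q$'' suffices to rule out sporadic extremal configurations arising from lower-order terms in the counting.

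\textbf{Main obstacle.} I expect the hard part to be Step 2 in full generality: constructing the antidesigns uniformly across the five values of $e$ and across $n$ even/odd, since the Iwahori--Hecke algebra of type $B_n$ is not one-dimensional per irreducible in the way type $A$ is, and the smallest eigenvalue together with its eigenspace depends non-trivially on $(n,e)$. Making the algebraic production of antidesigns ``type-independent'' — so that one argument covers parabolic, hyperbolic, elliptic, and the two Hermitian cases at once, with $^2A_{4n-3}$ being the lone genuine exception — is where the real work lies; the subsequent geometric deduction and the small-$q$/small-$n$ bookkeeping are comparatively routine.
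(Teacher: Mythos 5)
Your high-level strategy matches the paper's (ratio bound, then design-orthogonality against antidesigns, then geometry), but the middle step is too coarse and skips two genuinely essential ingredients of the argument.

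First, pairing $\charvec_\EKR$ against the subspace-based antidesigns $v_S$ (for a fixed $s$-space $S$) does \emph{not} directly yield that $\EKR$ consists of chambers ``through a common point, or through a common generator, or in a single class of generators.'' What it yields is a linear relation among three counts: the number $X$ of chambers in $\EKR$ with $C_s=S$, the number $Y$ with $C_s^\perp\cap S=\{0\}$, and the remainder $Z$ (this is \Cref{L: key for the proof}). To turn these relations into structure, the paper introduces a heavy/light dichotomy on subspaces and chambers, then proves (\Cref{P: light chamber}) that a ratio-sharp $\EKR$ cannot contain a chamber all of whose subspaces are light, and (\Cref{C: chambers - blow-ups}) that consequently there is a single $s\in[n]$ such that every chamber in $\EKR$ contains a heavy $s$-subspace — i.e.\ $\EKR$ is a blow-up of an EKR-set of $s$-spaces for \emph{some} $s$, not yet $s\in\{1,n\}$. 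Your proposal elides this entire reduction and asserts the $s\in\{1,n\}$ conclusion directly, which does not follow from design-orthogonality alone.

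Second, and more importantly, the step from ``blow-up of an EKR-set of $s$-spaces for some $s$'' to ``$s\in\{1,n\}$'' is absent from your plan, yet it is a substantial part of the proof (\Cref{S:EKR_s-spaces}, culminating in \Cref{P: finish proof}, and in fact \Cref{thm:main2} is proved this way). The paper rules out $2<s<n$ by a second antidesign $v_M$ for $m$-spaces $M$ with $m<s$, combined with a ``ladder'' argument (\Cref{L:down_the_waterfall} and \Cref{L:up_the_waterfall}) that iteratively drives the weight of points to $O(1)$ and derives a contradiction with the size of a ratio-sharp EKR-set of $s$-spaces; the case $s=2$ is quoted from \cite{metsch_pairwise-non-opp-lines}. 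Without this, your proof does not exclude the possibility that a maximum EKR-set of chambers is a blow-up of a hypothetical ratio-sharp EKR-set of, say, $3$-spaces. As a minor point, the outcome for $s=1$ is not ``all chambers through a common point'' but the blow-up of the set of all points in a fixed generator; your phrasing conflates a maximum EKR-set of points with a single point.

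Your identification of the main obstacle is also off: the paper's antidesigns $\chi_F$ and $v_S$ are constructed uniformly across all $(n,e)$ simply as rows of $A_J-\lambda_J I$ and sums thereof, without any delicate type-by-type analysis of the Hecke algebra; the real work is the heavy/light reduction and the exclusion of $1<s<n$.
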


Note that for $n = 3$, there might exist exceptional EKR-sets of generators of maximum size not mentioned in \cite{EKR_polarspace_generators}, yet our result still holds. The case $e=1/2$ and $n$ odd is an outlier as we saw before and hence remains open. For other polar spaces, \Cref{E: EKR for s-spaces} provided an exhaustive list for $n \geq 4$ and hence we have a complete classification of the maximum EKR-sets of chambers here. 

The case $e = 0$ and $n$ odd is exceptional for a different reason due to the occurrence of example (c) in \Cref{E: EKR for s-spaces}. Since this case is rather trivial, we will omit it from further discussions. As such, \textbf{we will assume for the remainder of the paper that} $e \geq 1$ \textbf{or} $n$ \textbf{is even}. \\

It is natural to ask what happens when $1,n \notin J$. To answer this question, we first introduce the graph-theoretic framework from \cite{AlgebraicApproach,AlgebraicApproach2} in which we will prove our results. For this purpose, we denote for $J \subseteq [n]$, the opposition graph on $\cF_J$ by $\Gamma_{J}$. To be precise, this is the graph whose vertices are the elements of $\cF_J$, i.e.\ the flags of type $J$ in a polar space $\PS(n,e,q)$, where two vertices are adjacent if the corresponding flags are opposite. If $J=\{s\}$, we simply write $\Gamma_s$. 

It may be clear that EKR-sets of flags of type $J$ are independent sets in $\Gamma_J$. In \cite{AlgebraicApproach} the spectrum of $\Gamma_J$ was investigated in order to obtain an upper bound on the size of maximum EKR-sets of flags of type $J$, using the ratio bound. In general, we call an EKR-set \emph{ratio-sharp} if it is a maximum EKR-set that attains the ratio bound. 

\begin{theorem}\cite[Theorem 3.15]{AlgebraicApproach}\label{thm:upperbound}
Let $\PS(n,e,q)$ be a polar space with $e \geq 1$ or $n$ be even.
Furthermore, let $J\subseteq[n]$.
Then an EKR-set $\EKR$ of flags of type $J$ of $\PS(n,e,q)$ satisfies
    \[|\EKR| \leq \frac{|\cF_J|}{q^{n+e-1}+1}.\]
    Moreover, if we have equality, then the characteristic vector of $\EKR$ lies in the sum of the eigenspaces of the largest and smallest eigenvalues of $\Gamma_J$.
\end{theorem}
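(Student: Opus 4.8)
The statement is exactly the Hoffman--Delsarte (ratio) bound applied to the opposition graph $\Gamma_J$, so the plan is: (i) verify that $\Gamma_J$ is regular; (ii) identify its least eigenvalue; (iii) invoke the ratio bound together with its equality case. Note that an EKR-set of flags of type $J$ is by definition an independent set in $\Gamma_J$, so any spectral bound on independent sets transfers verbatim.

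First I would observe that the full automorphism group of $\PS(n,e,q)$ acts transitively on $\cF_J$ and preserves oppositeness, so $\Gamma_J$ is vertex-transitive, hence $k_J$-regular, where $k_J$ is the number of flags of type $J$ opposite a fixed one. A direct count yields a closed form for $k_J$, but only the ratio $k_J/\lambda_{\min}(\Gamma_J)$ is needed below.

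The crux is step (ii). Working in the framework of \cite{AlgebraicApproach}, the adjacency matrix $A_J$ of $\Gamma_J$ is the image, in the permutation module $\mathbb{C}\,\cF_J$, of the element of the Iwahori--Hecke algebra $\mathcal H$ of type $B_n$ attached to the longest element $w_0$ (more precisely, to the longest double coset relative to the parabolic subgroup corresponding to the cotype of $J$), with the two parameters of $\mathcal H$ being those coming from the polar space $\PS(n,e,q)$. Since for these parameters $\mathcal H$ is semisimple over $\mathbb{C}$, the module $\mathbb{C}\,\cF_J$ splits into irreducibles on each of which this central-type element acts as a scalar, and the set of these scalars is precisely the spectrum of $\Gamma_J$. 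Using the explicit description of the irreducible characters of the type-$B_n$ Hecke algebra (indexed by bipartitions of $n$) and the known action of $w_0$ on them, one shows that the least of these scalars equals $-k_J/q^{\,n+e-1}$ and that every other eigenvalue is at least this value; this is the spectral computation carried out in \cite{AlgebraicApproach}, and it is here that the hypothesis ``$e\ge 1$ or $n$ even'' is used, to exclude the degenerate case $e=0$, $n$ odd. Granting $\lambda_{\min}(\Gamma_J)=-k_J/q^{\,n+e-1}$, the ratio bound for independent sets in a $k_J$-regular graph on $|\cF_J|$ vertices gives
\[
|\EKR|\ \le\ |\cF_J|\cdot\frac{-\lambda_{\min}}{k_J-\lambda_{\min}}\ =\ |\cF_J|\cdot\frac{k_J/q^{\,n+e-1}}{k_J\bigl(1+1/q^{\,n+e-1}\bigr)}\ =\ \frac{|\cF_J|}{q^{\,n+e-1}+1}.
\]
For the equality clause I would appeal to the standard refinement of the ratio bound: if $|\EKR|$ meets this bound then $\chi_{\EKR}-\tfrac{|\EKR|}{|\cF_J|}\,\allones$ is a $\lambda_{\min}$-eigenvector of $A_J$, whence the characteristic vector $\chi_{\EKR}$ lies in the span of $\allones$ (the eigenvector for the degree $k_J$, which is the largest eigenvalue) together with the $\lambda_{\min}$-eigenspace, i.e.\ in the sum of the eigenspaces of the largest and smallest eigenvalues of $\Gamma_J$.

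The main obstacle is the eigenvalue determination of step (ii): because $A_J$ sits inside a \emph{non-commutative} Hecke algebra rather than a commutative association scheme, pinning down the least eigenvalue genuinely requires the representation theory of the type-$B_n$ Hecke algebra (the classification of its irreducibles and the value of $w_0$ on each), plus a careful comparison of the resulting scalars over all irreducibles occurring in $\mathbb{C}\,\cF_J$.
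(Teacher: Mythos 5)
This theorem is cited from \cite{AlgebraicApproach} (Theorem 3.15 there) and not re-proved in the present paper, so there is no in-paper argument to compare against. Your sketch faithfully reconstructs the approach of the cited source: $\Gamma_J$ is vertex-transitive hence regular; its adjacency operator is an element of the type-$B_n$ Iwahori--Hecke algebra acting on the permutation module $\mathbb{C}\,\cF_J$; the least eigenvalue $\lambda_{\min}(\Gamma_J)=-k_J/q^{n+e-1}$ is determined via the representation theory of that non-commutative algebra; and the numerical bound together with the equality clause then follow from the standard ratio bound. You also place the hypothesis correctly: for $e=0$ and $n$ odd the opposition graph $\Gamma_n$ on generators is bipartite, its least eigenvalue is $-k_n$, and the resulting ratio bound $|\cF_n|/2$ is attained by a single generator class (\Cref{E: EKR for s-spaces}(c)), which exceeds $|\cF_n|/(q^{n+e-1}+1)$ --- so the exclusion is genuinely needed in the eigenvalue determination and not merely cosmetic.
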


As a consequence of \Cref{thm:main}, we obtain a classification of maximum EKR-sets of flags of type $J$ in buildings of type $B$, whenever $1 \in J$ or $n \in J$.

\begin{cor} \label{C: classification for flags}
    Consider $\PS(n,e,q)$, assume that $e \geq 1$ or $n$ even, and let $J \subseteq [n]$ so that $1 \in J$ or $n \in J$ (or both). Then for sufficiently large $q$ (in terms of $n$) a ratio-sharp EKR-set of flags of type $J$ has the structure described in 
    \Cref{E: blow-ups of s-spaces} for $s\in \{1,n\}$.
\end{cor}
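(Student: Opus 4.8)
The plan is to reduce the classification of ratio-sharp EKR-sets of flags of type $J$ (with $1\in J$ or $n\in J$) to the already-established classification for chambers, i.e.\ \Cref{thm:main}. The main device is a projection/lifting argument between $\cF_J$ and $\cF_{[n]}$ together with the ``last line'' of \Cref{thm:upperbound}, namely that a ratio-sharp EKR-set has its characteristic vector in the sum of the top and bottom eigenspaces of the opposition graph.

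First I would set up the combinatorial map. Let $\pi\colon \cF_{[n]}\to\cF_J$ be the truncation sending a chamber to its type-$J$ subflag, and let $\sigma\colon \cF_J\to 2^{\cF_{[n]}}$ send a flag $F$ of type $J$ to the set of chambers refining it. A key point is that this is a \emph{balanced} incidence: every flag of type $J$ is refined by the same number $c_J$ of chambers, and any two opposite chambers truncate to opposite flags of type $J$. Conversely — and this is the crucial geometric observation to verify — two flags of type $J$ are opposite if and only if some (equivalently, by homogeneity, the appropriate proportion of) pairs of chambers refining them are opposite; moreover an EKR-set $\EKR\subseteq\cF_J$ lifts to the EKR-set $\EKR'=\pi^{-1}(\EKR)$ of chambers with $|\EKR'| = c_J|\EKR|$. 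Since the ratio bounds for $\Gamma_J$ and $\Gamma_{[n]}$ both have the same denominator $q^{n+e-1}+1$ (by \Cref{thm:upperbound}, the bound is $|\cF_J|/(q^{n+e-1}+1)$ for every $J$), and since $|\cF_{[n]}| = c_J|\cF_J|$, it follows that $\EKR$ is ratio-sharp in $\Gamma_J$ if and only if its lift $\EKR'$ is ratio-sharp in $\Gamma_{[n]}$. Here one should justify the equality of ratio bounds at the spectral level: the incidence matrix $N$ between chambers and type-$J$ flags intertwines $\Gamma_{[n]}$ and $\Gamma_J$ in the relevant eigenspaces, so the characteristic vector of $\EKR$ lying in the top-plus-bottom eigenspace of $\Gamma_J$ pulls back, via $N^\top$ up to scalars, to the characteristic vector of $\EKR'$ lying in the top-plus-bottom eigenspace of $\Gamma_{[n]}$.

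Next I would invoke \Cref{thm:main}: for $q$ large in terms of $n$, the ratio-sharp EKR-set $\EKR'$ of chambers is a blow-up of a maximum EKR-set of $s$-spaces for some $s\in\{1,n\}$, i.e.\ $\EKR' = \{\text{chambers } C : C_s \in \EKR_s\}$ for a maximum EKR-set $\EKR_s$ of $s$-spaces. Because $s\in\{1,n\}\subseteq J$ (as $1\in J$ or $n\in J$, and in the latter case one takes $s=n$, in the former $s=1$ — here one uses that the \emph{same} value of $s$ works; if both $1$ and $n$ lie in $J$ either choice is available), the condition ``$C_s\in\EKR_s$'' depends only on the type-$J$ truncation $\pi(C) = F$. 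Hence $\EKR' = \pi^{-1}(\EKR)$ forces $\EKR = \{F\in\cF_J : F_s\in\EKR_s\}$, which is exactly the structure in \Cref{E: blow-ups of s-spaces}. One small subtlety: \Cref{thm:main} is stated for $s\in\{1,n\}$ and we must check the blow-up description is compatible with the chosen $J$; since $\{1,n\}\cap J\neq\emptyset$ by hypothesis, the value of $s$ produced by \Cref{thm:main} can always be taken inside $J$ (the blow-up of a maximum EKR-set of $1$-spaces to chambers equals the lift of its blow-up to $\cF_J$ when $1\in J$, and symmetrically for $n$), so no information is lost in the truncation.

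The main obstacle I anticipate is the spectral bookkeeping in the first step: showing cleanly that ratio-sharpness is preserved in both directions under $\pi$/$\sigma$, and in particular that the characteristic vector of $\EKR$ sits in the top-plus-bottom eigenspace of $\Gamma_J$ \emph{precisely when} that of $\EKR'$ does for $\Gamma_{[n]}$. This requires understanding how the quotient/lift interacts with the eigenspace decomposition of the opposition graphs — essentially that the fibration $\cF_{[n]}\to\cF_J$ is ``eigenvalue-respecting'' for the extreme eigenvalues. This should follow from the framework of \cite{AlgebraicApproach,AlgebraicApproach2} (the homogeneous coherent configuration on flags and the behavior of the relevant idempotents under the fusion/fission corresponding to forgetting coordinates), but making it rigorous, rather than hand-waving about ``the same denominator,'' is where the real work lies. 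Everything after that is a direct pullback of the chamber classification.
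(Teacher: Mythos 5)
Your outline matches the paper's strategy closely: lift $\EKR_J$ to a set of chambers via the blow-up/preimage, apply \Cref{thm:main}, and pull the resulting structure back down to type-$J$ flags. Two comments, one minor and one substantive.

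The minor one: the ``spectral bookkeeping'' you flag as the hard part is actually the easy part. Both ratio bounds share the denominator $q^{n+e-1}+1$, and each flag of type $J$ is refined by the same number $c_J$ of chambers, so $|\cF_{[n]}| = c_J|\cF_J|$ and $|\EKR'| = c_J|\EKR_J|$; this already gives that $\EKR_J$ attains its ratio bound iff $\EKR'$ does (plus the observation that preimages of independent sets under a graph homomorphism are independent). The ``Moreover'' clause in \Cref{thm:upperbound} is then a \emph{consequence} of attaining the bound, not an additional condition to transport. This is exactly \Cref{R: blow-up}, justified by the graph-homomorphism discussion preceding it; no intertwining of idempotents is needed at this stage.

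The substantive one is a genuine gap in the pull-back step. You assert that ``the value of $s$ produced by \Cref{thm:main} can always be taken inside $J$,'' but \Cref{thm:main} does not let you choose $s$: the value $s\in\{1,n\}$ is determined by the structure of $\EKR'$ (a ratio-sharp chamber set cannot simultaneously be the blow-up of a maximum point-EKR-set and of a maximum generator-EKR-set once $n\geq 2$, since the latter would have to contain all generators meeting a fixed generator, which includes opposite pairs). So if the hypothesis only guarantees, say, $1\in J$ but \Cref{thm:main} hands you $s=n$, your argument stalls. The paper closes exactly this hole with Lemma~3.3: if the determined $i$ satisfies $i\notin J$, then within the blow-up of $\EKR_i$ one can exhibit two chambers whose type-$J$ truncations are opposite, contradicting the fact that $\EKR'$ is also the blow-up of the EKR-set $\EKR_J$ (this simultaneously proves \Cref{thm:main2}). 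Your proposal needs that lemma or an equivalent argument; without it the parenthetical justification you give (comparing blow-ups to $\cF_{[n]}$ versus $\cF_J$) only treats the case $s\in J$ and says nothing about ruling out $s\notin J$.
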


We are now in the position to state our second main result: when $1,n \notin J$ the bound from \Cref{thm:upperbound} can never be sharp. This answers  \cite[Problem 3.20]{AlgebraicApproach}. We remark that a special case of the following theorem is used to prove \Cref{thm:main}.

\begin{theorem}\label{thm:main2}
    Consider $\PS(n,e,q)$, assume that $e\ge 1$, or $n$ even, and let $J \subseteq [n]$ be so that $1,n \notin J$. Then for sufficiently large $q$ (in terms of $n$) the bound in \Cref{thm:upperbound} can never be attained.
\end{theorem}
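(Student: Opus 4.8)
The plan is to show that a ratio-sharp EKR-set of flags of type $J$ with $1,n\notin J$ would have to be simultaneously "too concentrated" and "too spread out". The key structural input is \Cref{thm:upperbound}: if equality holds, then the characteristic vector $\chi$ of the EKR-set $\EKR$ lies in $V_{\max}\oplus V_{\min}$, the sum of the eigenspaces of $\Gamma_J$ for the extreme eigenvalues. The first step is to exploit the relationship between flags of type $J$ and flags of type $J\cup\{s\}$ for a well-chosen $s\in\{1,n\}$. Since $1,n\notin J$, both $s=1$ and $s=n$ are available, and I would use the natural projection/lifting maps between $\cF_J$ and $\cF_{J\cup\{1\}}$ (forgetting, resp.\ averaging over, the point $F_1$). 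The point is that these incidence maps intertwine the opposition graphs in a controlled way on the relevant eigenspaces, so a ratio-sharp configuration in $\cF_J$ would push forward to a configuration in $\cF_{J\cup\{1\}}$ (or $\cF_{J\cup\{n\}}$) whose characteristic vector again lies in the top-plus-bottom eigenspace — i.e., to something behaving like a ratio-sharp EKR-set there. By \Cref{C: classification for flags} such a set must be a blow-up of a maximum EKR-set of points or of generators.

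Next I would derive a contradiction from this forced structure. If $\EKR$ pulls back from a point-blow-up, then for every flag $F\in\EKR$ the point $F_1$ (which is \emph{not} part of the type data, since $1\notin J$) must lie in a fixed generator $\pi$; but $F_1\le F_{\min J}$, so in fact $F_{\min J}$ meets $\pi$ in a point, which is a nontrivial linear constraint that an honest EKR-set of type $J$ need not satisfy. Concretely, one exhibits two flags $F,F'$ of type $J$ that are non-opposite (hence could both lie in a genuine maximum EKR-set by an appropriate blow-up/counting lower bound) yet cannot both satisfy the concentration condition, contradicting maximality: a maximum EKR-set is at least as large as the blow-up examples of \Cref{E: blow-ups of s-spaces} applied with $s=\min J$ or $s=\max J$ lifted from $\cF_1$ or $\cF_n$, and a short count shows these beat anything confined by the spurious point/generator constraint. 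The symmetric argument with $s=n$ (generator-blow-up) handles the other alternative. Running both simultaneously is what closes the case: the type-$(J\cup\{1\})$ analysis forces a point-concentration, the type-$(J\cup\{n\})$ analysis forces a generator-concentration, and these two are mutually incompatible for a set of the size demanded by the ratio bound.

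The algebraic heart of the argument, and the step I expect to be the main obstacle, is verifying that the incidence map from $\cF_J$ to $\cF_{J\cup\{1\}}$ (and to $\cF_{J\cup\{n\}}$) really does send $V_{\max}\oplus V_{\min}$ of $\Gamma_J$ into $V_{\max}\oplus V_{\min}$ of the target graph — equivalently, that the extreme eigenspaces are "compatible" across the forgetful maps. This is precisely where the non-commutativity of the Iwahori-Hecke algebra of type $B$ bites, and where the antidesign / design-orthogonality machinery advertised in the abstract should come in: rather than tracking eigenspaces explicitly (as in \cite{klaus_jesse_philipp}), I would phrase the needed compatibility as an orthogonality statement between $\chi - \frac{|\EKR|}{|\cF_J|}\allones$ and a suitable antidesign built from the blow-up examples, and transfer it along the incidence maps using that antidesigns of type $J$ pull back to antidesigns of type $J\cup\{s\}$. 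The remaining steps — the counting lower bound on maximum EKR-sets, and the explicit incompatibility of point- and generator-concentration — are routine once the eigenspace-transfer lemma is in hand, though the bookkeeping over the five values of $e$ and the parity of $n$ will need care, in particular re-checking that the excluded case $e=1/2$, $n$ odd is genuinely the only place where \Cref{C: classification for flags} is unavailable.
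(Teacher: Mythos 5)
Your high-level strategy — blow up $\EKR_J$ to a larger type, invoke the classification of ratio-sharp sets of that type, then derive a contradiction from the forced point/generator structure — is in the right direction, but it diverges from the paper's proof and has a gap in the final step.

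The paper's proof is shorter and avoids most of what worries you. It blows $\EKR_J$ up all the way to chambers, applies \Cref{thm:main} to conclude that the resulting ratio-sharp set $\EKR$ of chambers is the blow-up of a ratio-sharp set $\EKR_i$ of $i$-spaces with $i \in \{1,n\}$, and then uses a short technical lemma: since $i \notin J$, one can always find two $i$-spaces $I_1,I_2 \in \EKR_i$ that extend to flags $F_1,F_2$ of type $\{i\}\cup J$ with $F_1\setminus\{I_1\}$ and $F_2\setminus\{I_2\}$ opposite. Extending $F_1,F_2$ to chambers $C_1,C_2$, these lie in $\EKR$ (as $I_1,I_2\in\EKR_i$), yet their type-$J$ restrictions are opposite, contradicting the fact that $\EKR$ is also the blow-up of the EKR-set $\EKR_J$. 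This handles $i=1$ and $i=n$ uniformly without any geometric case analysis.

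Concerning your ``main obstacle'': you do not need to track $V_{\max}\oplus V_{\min}$ across incidence maps at all. That ratio-sharpness transfers under blow-up (\Cref{R: blow-up}) follows from the fact that the restriction map $\Gamma_{[n]}\to\Gamma_{J}$ (or $\Gamma_{J\cup\{1\}}\to\Gamma_J$) is a graph homomorphism between vertex-transitive graphs, and the fractional chromatic number of both equals $q^{n+e-1}+1$ when the ratio bound is tight, so the preimage of a maximum independent set is a maximum independent set of the right size. No eigenspace bookkeeping and no antidesigns are required here; the antidesign machinery enters only in the proof of \Cref{thm:main}, which you may take as given.

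The genuine gap is in your contradiction. The claim that two non-opposite flags ``could both lie in a genuine maximum EKR-set'' is not justified and in general is false: non-oppositeness alone does not put two flags into a common maximum EKR-set. The phrase ``blow-up examples of \Cref{E: blow-ups of s-spaces} applied with $s=\min J$ or $s=\max J$ lifted from $\cF_1$ or $\cF_n$'' does not parse when $1,n\notin J$, since that observation requires $s\in J$. Finally, running both $s=1$ and $s=n$ is not what actually closes the argument: already the $s=1$ branch is internally contradictory, because there exist flags $G$ of type $J$ with $G_{\min J}\cap\pi$ a proper nonzero subspace of $G_{\min J}$, and for such $G$ different point-extensions would simultaneously force $G\in\EKR_J$ and $G\notin\EKR_J$. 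If you want to keep your route, this is the argument you should make precise; but the paper's lemma about producing two chambers in $\EKR$ whose type-$J$ restrictions are opposite gets to the contradiction more directly.
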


Remarkably, our restrictions on $e$ and $n$ in \Cref{thm:main} come only from the fact that these are the cases in which the ratio bound is tight. In particular the cases $e\ge 1$ or $n$ even include instances in which the minimal eigenvalue of $\Gamma_{[n]}$ is attained on a well-understood module of the adjacency algebra denoted by $M_{\{[n-1],[1]\}}$, and instances in which the minimal eigenvalue of $\Gamma_{[n]}$ is attained on the module $M_{\{\emptyset,[n]\}}$. There is a `nice' description for $M_{\{[n-1],[1]\}}$, but the module $M_{\{\emptyset,[n]\}}$ is much more mysterious, for details see \cite{AlgebraicApproach2}. \\

Returning to the matter at hand, our proof strategy for \Cref{thm:main} is as follows. We leverage the extra information from \Cref{thm:upperbound} on the characteristic vector of a tentative maximum independent set in $\Gamma_{[n]}$. For $s\in [n]$ we use the smallest eigenvalue of $\Gamma_{s}$ to find multisets of vertices that are design-orthogonal to a maximum independent set $\EKR$. This means that they have constant intersection (counting with multiplicity) with $\EKR$. For convenience, we will refer to these special multisets as \textit{antidesigns}. The design-orthogonality allows us to obtain local geometric information about $\EKR$. With geometric arguments that involve weights, we can show that $\EKR$ is a blow-up of an EKR-set of $s$-spaces for some $s \in [n]$. Finally, we once again use antidesigns for $\Gamma_s$, and we show that the `blow-ups' are too small when $1<s<n$.


\section{Combinatorics of polar spaces}

In this preliminary section we provide some combinatorial results on polar spaces that will be used throughout the paper. As mentioned in the introduction it is understood that we only consider subspaces of $\PS(n,e,q)$. Let $S$ and $T$ be two subspaces of the underlying vector space, then we call them \emph{skew} if $S \cap T = \{0\}$. A chamber $C$ is said to \textit{contain} a fixed $s$-space $S$, if $C_s=S$, we use the same terminology for flags. For the first lemma recall that the totally isotropic subspaces contained in a totally isotropic subspace constitute a projective geometry.

\begin{lemma} \label{L: gaussian coefficient}
	The number of $m$-subspaces contained in a fixed $s$-subspace of $\PS(n,e,q)$ is 
   \begin{align*}
 \gauss{s}{m}_q=
 \begin{cases}
\displaystyle \prod_{i=1}^m\frac{q^{s-m+i}-1}{q^i-1} & \text{if $0\le m\le s$,}
\\
0 & \text{otherwise.} \end{cases}
\end{align*}
\end{lemma}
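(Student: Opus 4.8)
The plan is to reduce the count to the classical enumeration of subspaces of a vector space. Fix an $s$-subspace $S$ of $\PS(n,e,q)$. By definition $S$ is totally isotropic (respectively totally singular) with respect to the underlying form, hence so is every vector subspace of $S$, and therefore the $m$-subspaces of the polar space contained in $S$ are precisely the $m$-dimensional $\mathbb{F}_q$-subspaces of $S$, which we may identify with the $m$-dimensional subspaces of $\mathbb{F}_q^{\,s}$. This is exactly the content of the remark recalled just before the statement: the subspaces contained in $S$ form the projective geometry $\PG(s-1,q)$, so no extra isotropy condition intervenes.

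If $m<0$ or $m>s$ there is clearly no such subspace, which gives the value $0$. For $0\le m\le s$ I would count in the standard way. The number of ordered $m$-tuples of linearly independent vectors in $\mathbb{F}_q^{\,s}$ equals $\prod_{i=0}^{m-1}(q^s-q^i)$, since the $(i{+}1)$-st vector must avoid the $i$-dimensional span of the previous ones. Each $m$-dimensional subspace admits $\prod_{i=0}^{m-1}(q^m-q^i)$ ordered bases, by the same count with $s$ replaced by $m$. Dividing, the number of $m$-dimensional subspaces of $\mathbb{F}_q^{\,s}$ is
\[
\prod_{i=0}^{m-1}\frac{q^s-q^i}{q^m-q^i}
=\prod_{i=0}^{m-1}\frac{q^i\,(q^{\,s-i}-1)}{q^i\,(q^{\,m-i}-1)}
=\prod_{i=0}^{m-1}\frac{q^{\,s-i}-1}{q^{\,m-i}-1},
\]
and reindexing via $k=m-i$ turns the last product into $\prod_{k=1}^{m}\frac{q^{\,s-m+k}-1}{q^{\,k}-1}$, which is the claimed expression for $\gauss{s}{m}_q$.

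I do not anticipate any genuine obstacle. The only point requiring care is the first step, namely that containment in the polar space imposes no condition beyond the dimension on subspaces of a totally isotropic $S$; this is precisely the fact recalled immediately before the lemma. Everything after that is the routine derivation and rewriting of the Gaussian binomial coefficient.
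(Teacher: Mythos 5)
Your proof is correct, and it takes exactly the approach the paper signals: the paper itself gives no proof of this lemma, relying only on the preceding remark that the subspaces of a totally isotropic $s$-space form a projective geometry $\PG(s-1,q)$, at which point the count is the standard Gaussian binomial derivation you carry out. Your reduction to $\mathbb{F}_q^{\,s}$, the double count via ordered bases, and the reindexing $k=m-i$ are all routine and correct.
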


\begin{lemma}[see Lemma 9.4.1 in \cite{distance-regular-graphs} and Remark 4.1.2 in \cite{Vanhove_PhD}] \label{L: extend subspaces} 
Let $0\le m\le  s\le n$. The number of $s$-spaces through a fixed $m$-space of $\PS(n,e,q)$ is given by 
$$ \Phi_m^s(n,e,q):= \Gauss{n-m}{s-m} \prod\limits_{i=1}^{s-m}(q^{n-m+e-i}+1).$$
Since $n$, $e$ and $q$ are always clear from the context, we write $\Phi_m^s$ instead of $\Phi_m^s(n,e,q)$.
\end{lemma}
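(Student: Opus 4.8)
The plan is to prove the formula by induction, exploiting the residual structure of polar spaces. First I would reduce to the case $m=0$. If $S$ is a fixed $m$-space of $\PS(n,e,q)$, then the subspaces of the polar space containing $S$, after passing to the quotient $S^{\perp}/S$, form a polar space of rank $n-m$ and the same type $e$ over $\mathbb{F}_q$, with an $s$-space through $S$ corresponding to an $(s-m)$-space of $\PS(n-m,e,q)$. Hence $\Phi_m^s(n,e,q)=\Phi_0^{s-m}(n-m,e,q)$, and it suffices to establish
$$\Phi_0^t(N,e,q)=\gauss{N}{t}_q\prod_{i=1}^{t}(q^{N+e-i}+1)\qquad\text{for }0\le t\le N.$$

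For this I would induct on $t$, the case $t=0$ being trivial since there is a unique $0$-space. For the inductive step, double-count the incident pairs $(U,W)$ with $U$ a $(t-1)$-space, $W$ a $t$-space and $U\le W$: counting $U$ first gives $\Phi_0^{t-1}(N,e,q)\cdot\Phi_{t-1}^{t}(N,e,q)$, while counting $W$ first gives, using \Cref{L: gaussian coefficient}, $\Phi_0^{t}(N,e,q)\cdot\gauss{t}{t-1}_q=\Phi_0^{t}(N,e,q)\cdot\frac{q^t-1}{q-1}$. The only remaining quantity, $\Phi_{t-1}^{t}(N,e,q)$, is by the residue correspondence the number of points of the rank-$(N-t+1)$ polar space $\PS(N-t+1,e,q)$, namely $\frac{(q^{N-t+1}-1)(q^{N-t+e}+1)}{q-1}$. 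Substituting and solving for $\Phi_0^t$ yields the recursion
$$\Phi_0^{t}(N,e,q)=\Phi_0^{t-1}(N,e,q)\cdot\frac{(q^{N-t+1}-1)(q^{N-t+e}+1)}{q^{t}-1},$$
and iterating from $\Phi_0^0=1$ gives $\prod_{j=1}^{t}\frac{q^{N-j+1}-1}{q^{j}-1}\cdot\prod_{j=1}^{t}(q^{N+e-j}+1)$, which is exactly $\gauss{N}{t}_q\prod_{i=1}^{t}(q^{N+e-i}+1)$ once the first product is recognized as a Gaussian binomial and the second is reindexed. Substituting $N=n-m$, $t=s-m$ then recovers the general formula.

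The two inputs I take for granted are that the residue of an $m$-space of $\PS(n,e,q)$ is again a polar space, of rank $n-m$ and type $e$ (standard: it is the polar space of the form induced on $S^{\perp}/S$), and the point count $\frac{(q^{r}-1)(q^{r+e-1}+1)}{q-1}$ of a rank-$r$ polar space of type $e$, which I would either cite or derive by the same kind of recursive bookkeeping (splitting the points of a polar space according to collinearity with a fixed point, using that the lines through a point correspond to the points of the rank-$(r-1)$ residue). There is no serious obstacle here; the only point requiring care is that the parameter $e$ is genuinely \emph{preserved} under taking residues, not shifted, so that the exponents produced by the point-count formula line up with the claimed product $\prod_{i=1}^{t}(q^{N+e-i}+1)$ after reindexing — indeed, for a rank-$1$ residue this is precisely the defining relation that an $(n-1)$-space lies on $q^{e}+1$ generators. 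Everything else is elementary manipulation of Gaussian binomials.
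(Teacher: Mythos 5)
The paper does not prove this lemma at all; it is cited verbatim from Brouwer--Cohen--Neumaier (Lemma~9.4.1) and Vanhove's thesis (Remark~4.1.2), so there is no in-paper proof to compare against. Your proof is nevertheless correct and is essentially the standard derivation. The reduction to $m=0$ via the quotient $S^\perp/S$ is sound, and you are right to flag that the type parameter $e$ is preserved (not shifted) under this residue construction -- that is exactly the fact that makes the exponents line up. The double count of incident pairs $(U,W)$ is clean: $\Phi_0^{t-1}\cdot\Phi_{t-1}^t = \Phi_0^t\cdot\gauss{t}{t-1}_q$, and $\gauss{t}{t-1}_q=\frac{q^t-1}{q-1}$ is immediate from \Cref{L: gaussian coefficient}. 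Iterating the resulting recursion from $\Phi_0^0=1$ produces $\prod_{j=1}^t\frac{q^{N-j+1}-1}{q^j-1}\cdot\prod_{j=1}^t(q^{N+e-j}+1)$, and the first product telescopes to $\gauss{N}{t}_q$ as you say. The one genuine external input is the point count $\frac{(q^{r}-1)(q^{r+e-1}+1)}{q-1}$ of a rank-$r$ polar space, which you acknowledge; note that this is itself the $t=1$, $m=0$ case of the lemma (applied to the residue of rank $N-t+1$), so if you derive it rather than cite it, you should do so \emph{before} starting the induction on $t$ to avoid any appearance of circularity. With that ordering made explicit, the argument is complete.
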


\begin{lemma} \label{L:z_s}
    Let $S$ be an $s$-space of $\PS(n,e,q)$. The number of flags of type $\{1,\ldots, s\}$ that contain $S$ is $z_s(q):=\prod\limits_{i=1}^s \Gauss{i}{1}$. Since $q$ is always clear from the context, we write $z_s$ instead of $z_s(q)$.
\end{lemma}

\begin{proof}
We use induction on $s$. For $s=1$ the only flag consists of $S$ itself and $z_1=1$.
If $s\ge 2$, there exist $\gauss{s}{1}$ subspaces $P$ of dimension one in $S$. The induction hypothesis applied to the quotient space $S/P$ shows that each such subspace lies in  $z_{s-1}$ flags. Hence, the total number of flags in question is $z_{s-1}\gauss{s}{1}=z_s$.    
\end{proof}

We restate a special case of \Cref{L: extend subspaces} and \Cref{thm:upperbound} using this new notation.

\begin{cor} \label{C: Phi}
\begin{enumerate} [(a)]
\item The number of $s$-spaces in $\PS(n,e,q)$ is $\Phi_0^s$.
\item For $e\ge 1$ or $n$ even the size of an EKR-set of chambers of $\PS(n,e,q)$ is at most $\Phi_1^n z_n$.
\end{enumerate}
\end{cor}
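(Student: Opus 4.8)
The plan is to derive both statements as direct specializations of earlier results, so no substantial new work is needed — the content is purely a translation into the notation $z_s$, $\Phi_m^s$ just introduced.

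\textbf{Part (a).} The number of $s$-spaces in $\PS(n,e,q)$ is obtained by taking the trivial subspace as the base of the count. Formally, I would apply \Cref{L: extend subspaces} with $m = 0$: the number of $s$-spaces through the (unique) $0$-space is $\Phi_0^s(n,e,q)$. Since every $s$-space contains the $0$-space, this counts all $s$-spaces. This is immediate; the only thing to check is that the formula in \Cref{L: extend subspaces} makes sense at $m=0$, which it does, giving $\Phi_0^s = \Gauss{n}{s}\prod_{i=1}^{s}(q^{n+e-i}+1)$.

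\textbf{Part (b).} Here I would combine \Cref{thm:upperbound} (specialized to $J = [n]$) with a count of $|\cF_{[n]}|$, the number of chambers. By \Cref{thm:upperbound}, an EKR-set of chambers has size at most $|\cF_{[n]}|/(q^{n+e-1}+1)$. It therefore suffices to show $|\cF_{[n]}| = \Phi_1^n z_n \cdot (q^{n+e-1}+1)$, equivalently $|\cF_{[n]}| = \Phi_0^n z_n$ (using that $\Phi_0^n = \Phi_1^n(q^{n+e-1}+1)$, which follows by comparing the product formulas for $\Phi_0^n$ and $\Phi_1^n$ from \Cref{L: extend subspaces}). To count chambers directly: a chamber is determined by choosing its top subspace $F_n$ (a generator, of which there are $\Phi_0^n$) together with a full flag $\{1,\dots,n-1\}$ inside $F_n$; by \Cref{L:z_s} applied with $s = n$, the number of flags of type $\{1,\dots,n\}$ contained in a fixed generator — equivalently the number of complete flags in the projective geometry on an $n$-space — is $z_n$. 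Hence $|\cF_{[n]}| = \Phi_0^n z_n$, and dividing by $q^{n+e-1}+1$ gives the claimed bound $\Phi_1^n z_n$.

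\textbf{Main obstacle.} There is essentially no obstacle; the only care needed is the bookkeeping identity $\Phi_0^n = \Phi_1^n(q^{n+e-1}+1)$ and the interpretation of $z_n$ as counting complete flags of a generator (rather than flags of the whole polar space whose top space varies) — this is fine because once $F_n$ is fixed, all subspaces of the flag lie inside it and \Cref{L:z_s} applies verbatim to the projective geometry on $F_n$. If one prefers to avoid the identity, one can instead directly invoke \Cref{thm:upperbound} and simplify $|\cF_{[n]}|/(q^{n+e-1}+1)$ using the product formulas, but the factorization argument above is cleaner.
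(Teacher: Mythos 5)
Your proposal is correct and is exactly the kind of bookkeeping the paper intends: the paper states this corollary without proof, calling it simply a restatement of Lemma~\ref{L: extend subspaces} (at $m=0$) and Theorem~\ref{thm:upperbound} (at $J=[n]$) in the new $\Phi$/$z$ notation. Your factorization $|\cF_{[n]}| = \Phi_0^n z_n$ via choosing the generator first and then the complete flag inside it, together with the identity $\Phi_0^n = \Phi_1^n(q^{n+e-1}+1)$, is the natural way to carry out that translation.
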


For a $q$-polynomial $f$, we denote by $\deg(f)$ the degree of $f$.

\begin{lemma} \label{L:degree_stuff}
For $1\leq s\leq n$, we have
    \begin{enumerate} [(a)]
        \item $\deg(z_s)=s(s-1)/2$,
        \item $\deg(\Phi_m^s)=(s-m)(2n-s-m+e-\frac{s-m+1}{2})$,
        \item $\deg(z_sz_{n-s}\Phi_s^n)=s(s-1)/2+(n-s)(n-s+e-1)$,
        \item $\deg(\Phi_1^nz_n)=(n-1)(n+e-1)$.
    \end{enumerate}
\end{lemma}
\begin{proof}
For $s\in [n]$ we have that $\Gauss{n}{s}$ is a $q$-polynomial (with leading coefficient $1$). It follows directly from the definition (see \Cref{L: gaussian coefficient}) that $\deg\left(\Gauss{n}{s}\right)=s(n-s)$. The results follow from elementary computations using Gauss summation. 
\end{proof}

Furthermore, we need some results that are a bit more technical.

\begin{lemma} [Lemma 9.4.2 in \cite{distance-regular-graphs}] \label{L:advanced_comb}
Let $U$ be a fixed $m$-space in $\PS(n,e,q)$. The number of $(j+k+l)$-spaces $U$ meeting $U$ in a $j$-space and $U^\perp$ in a $(j+l)$-space is 
$$ q^{l(m-j)+k(2n-m-j-2l+e-1)-k(k-1)/2}\Gauss{m}{j}\Gauss{m-j}{k}\Gauss{n-m}{l} \prod\limits_{i=0}^{l-1}(q^{n+e-m-i-1}+1). $$
\end{lemma}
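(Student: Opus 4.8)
\textbf{Proof plan for Lemma~\ref{L:advanced_comb}.}

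The statement is a counting identity, attributed to \cite{distance-regular-graphs}, so I do not need a conceptual breakthrough — I only need to reconstruct (or re-derive) the count. The plan is to count in three stages, corresponding to the three parameters $j$, $k$, $l$, peeling off one piece of the target subspace at a time and using the already-established Lemmas~\ref{L: gaussian coefficient}--\ref{L: extend subspaces} to count subspaces inside and outside the polar space structure. Throughout, write $W$ for the generic $(j+k+l)$-space we wish to count, $U$ for the fixed $m$-space, and keep in mind the two sub-chunks $W\cap U$ (to be a $j$-space) and $W\cap U^\perp$ (to be a $(j+l)$-space, so containing $W\cap U$ in a $j$-space and adding an $l$-space that meets $U$ trivially but lies in $U^\perp$).

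First I would choose the $j$-space $A := W\cap U$ inside $U$: by Lemma~\ref{L: gaussian coefficient} there are $\gauss{m}{j}_q$ choices, since $U$ with its induced structure is just a projective space. Next, inside $U^\perp$ but avoiding $U$, I would choose the $l$-space $B$ that completes $W\cap U^\perp$ to a $(j+l)$-space through $A$; here I should work in the quotient geometry $A^\perp/A$, which is a polar space of rank $n-j$ and the same type $e$, and count $l$-spaces in the residue of $U/A$ (an $(m-j)$-space) lying in its perp — this is where the factor $\gauss{n-m}{l}_q\prod_{i=0}^{l-1}(q^{n+e-m-i-1}+1)$ and a power-of-$q$ contribution $q^{l(m-j)}$ arise, the latter accounting for the freedom of $B$ modulo being forced into $U^\perp$. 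Then the remaining $k$-space, call it $D$, must be chosen so that $W = A \oplus B \oplus D$ meets $U$ in exactly $A$ and $U^\perp$ in exactly $A\oplus B$; equivalently, $D$ projects onto a $k$-space in the quotient by $(A\oplus B)$ that is skew to the images of both $U$ and $U^\perp$. Counting these $k$-spaces "in general position" relative to two fixed subspaces of complementary-ish dimension in the ambient projective space gives the factors $\gauss{m-j}{k}_q$ and the leftover power of $q$, namely $q^{k(2n-m-j-2l+e-1)-k(k-1)/2}$. Multiplying the three stage-counts yields the claimed formula.

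The main obstacle, I expect, is bookkeeping the powers of $q$ correctly: each stage contributes a Gaussian binomial (an "internal" count that is essentially forced) times a power of $q$ that measures how many ways the new chunk can be positioned transversally to the already-chosen subspaces, and these exponents must be tracked through the successive quotient/residue reductions so that they sum exactly to $l(m-j)+k(2n-m-j-2l+e-1)-k(k-1)/2$. A clean way to organize this is to fix flags step by step and, at each step, count the affine-type freedom: once $A$ is fixed, $B$ ranges over an affine space of dimension $l(m-j)$ (after pinning down its "direction"); once $A\oplus B$ is fixed, $D$ ranges over a variety whose size is a product of a Gaussian binomial with a power of $q$ of the stated exponent, the $-k(k-1)/2$ coming from the standard formula for the number of $k$-spaces complementary to a fixed subspace. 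I would double-check the final exponent by verifying the two degenerate cases $k=0$ (reducing to the count of $(j+l)$-spaces meeting $U$ in a $j$-space inside $U^\perp$, which follows from Lemmas~\ref{L: gaussian coefficient} and \ref{L: extend subspaces}) and $j=l=0$ (reducing to the count of $k$-spaces skew to both $U$ and $U^\perp$), and by checking that summing over all admissible $(j,k,l)$ with $j+k+l$ fixed recovers $\Phi^{\,j+k+l}_0$ via the $q$-Vandermonde identity — this sanity check is the surest guard against an off-by-a-power error. Since the result is quoted from the literature, it would also be legitimate simply to cite \cite[Lemma~9.4.2]{distance-regular-graphs} and omit the re-derivation, but I would include the three-stage argument for self-containedness.
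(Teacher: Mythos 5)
The paper offers no proof of this lemma at all: it is stated with the attribution ``Lemma 9.4.2 in \cite{distance-regular-graphs}'' and cited as an external fact, so there is no internal argument to compare yours against. Your three-stage outline (choose $A = W\cap U$ inside $U$; choose the $l$-space completing $W\cap U^\perp$ by working in $U^\perp/U \cong \PS(n-m,e,q)$ and lifting, which accounts for the factor $\gauss{n-m}{l}\prod_{i=0}^{l-1}(q^{n+e-m-i-1}+1)\cdot q^{l(m-j)}$; then choose the complementary $k$-space via its $k$-dimensional image in $(U/A)^{*}$, giving $\gauss{m-j}{k}$) is a sound skeleton and, as best I can tell, tracks the standard derivation. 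But you should be aware that what you have written is a plan, not a proof: the exponent $k(2n-m-j-2l+e-1)-k(k-1)/2$ in the third stage is asserted rather than computed, and that exponent is where essentially all of the content (and all of the risk of an off-by-a-power error) lives. Your proposed sanity checks ($k=0$, $j=l=0$, and recovering $\Phi_0^{j+k+l}$ by summation) are good ones and would catch such an error, but they do not by themselves constitute the derivation. Your closing remark — that it is legitimate simply to cite \cite[Lemma~9.4.2]{distance-regular-graphs} and skip the re-derivation — is exactly what the authors do; that is the appropriate choice here unless one wants the paper to be fully self-contained on a point that is orthogonal to its actual contribution.
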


\begin{lemma} \label{L: extend flags}
	Consider a flag $F$ of type $\{t_1,t_2, \ldots ,t_f \}$ in $\PS(n,e,q)$ with $t_1 < t_2 < \dots < t_f$. Then the number of chambers in $\PS(n,e,q)$ that contain $F$ is exactly
  $$\Phi_{t_f}^n z_{t_1}z_{n-t_f}\prod_{i=2}^f(z_{t_i-t_{i-1}}). $$ 
\end{lemma}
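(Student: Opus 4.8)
The plan is to count chambers containing $F$ by building the chamber "level by level", using the multiplicativity of flag counts in quotient spaces. The key observation is that a chamber $C$ containing $F$ is obtained by choosing, between consecutive members of $F$ (and below $F_{t_1}$, and above $F_{t_f}$), a complete flag filling in the missing dimensions, and these choices are essentially independent once we pass to the appropriate quotient geometries.

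First I would handle the piece above $F_{t_f}$. Using \Cref{L: extend subspaces}, there are $\Phi_{t_f}^n$ generators (i.e.\ $n$-spaces) through $F_{t_f}$, and for each such generator $\pi$, the $n$-space $\pi$ together with $F_{t_f}$ determines a quotient $\pi/F_{t_f}$ which is a projective space of rank $n - t_f$; the number of ways to complete $F_{t_f} \subseteq \cdots \subseteq \pi$ to a full flag of types $\{t_f, t_f+1, \dots, n\}$ equals the number of full flags in this projective space, which is $z_{n-t_f}$ by \Cref{L:z_s} (the proof of that lemma shows $z_s$ counts full flags in a rank-$s$ projective geometry). This accounts for the factor $\Phi_{t_f}^n z_{n-t_f}$.

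Next I would handle the piece below $F_{t_1}$: since $F_{t_1}$ is a $t_1$-space of the polar space, the subspaces inside it form a projective geometry of rank $t_1$, so the number of full flags of types $\{1, \dots, t_1\}$ contained in $F_{t_1}$ is $z_{t_1}$ by \Cref{L:z_s}. Finally, for each consecutive pair $F_{t_{i-1}} \le F_{t_i}$ with $2 \le i \le f$, the quotient $F_{t_i}/F_{t_{i-1}}$ is a projective geometry of rank $t_i - t_{i-1}$, and the number of ways to interpolate a flag of the missing types $\{t_{i-1}+1, \dots, t_i - 1\}$ is the number of full flags in that quotient, namely $z_{t_i - t_{i-1}}$. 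Multiplying all these independent contributions gives $\Phi_{t_f}^n\, z_{t_1}\, z_{n-t_f}\, \prod_{i=2}^f z_{t_i - t_{i-1}}$, as claimed. I would then note that the product telescopes consistently: $z_{t_1} \cdot \prod_{i=2}^f z_{t_i-t_{i-1}} \cdot z_{n-t_f}$ is exactly the count of ways to refine $F$ into a chamber once the top generator is fixed, so no overcounting occurs.

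The main obstacle is making precise the claim that the choices "above $F_{t_f}$", "below $F_{t_1}$", and "between consecutive members" are genuinely independent, i.e.\ that a chamber containing $F$ corresponds bijectively to a tuple consisting of a generator through $F_{t_f}$ plus one full flag in each relevant quotient/subgeometry. This requires observing that for $i < t_1$ the subspace $C_i$ lies in $F_{t_1}$ (hence is determined by a flag inside the projective geometry on $F_{t_1}$), that for $t_{i-1} < i < t_i$ the subspace $C_i$ satisfies $F_{t_{i-1}} \le C_i \le F_{t_i}$ (a flag in the quotient $F_{t_i}/F_{t_{i-1}}$), and that for $i > t_f$ the subspaces $C_i$ form a flag in $\pi/F_{t_f}$ for the unique generator $\pi = C_n$. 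All the intermediate subspaces beyond $F_{t_f}$ are automatically totally isotropic since they lie in the generator $\pi$; the only genuinely "polar" ingredient is the count $\Phi_{t_f}^n$ of generators through $F_{t_f}$, and everything else is projective-geometric. A clean way to organize this is induction on $f$ (or on $n - t_f$), peeling off one level at a time and invoking \Cref{L: extend subspaces} and \Cref{L:z_s}, exactly in the style of the proof of \Cref{L:z_s} itself.
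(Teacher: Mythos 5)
Your proof is correct and follows essentially the same route as the paper: fix the $\Phi_{t_f}^n$ generators through $F_{t_f}$, then reduce the remaining count to a vector-space question by counting complete flags in each quotient $F_{t_i}/F_{t_{i-1}}$ (and below $F_{t_1}$, and above $F_{t_f}$ inside the chosen generator) via \Cref{L:z_s}. The paper states this more tersely but the decomposition, the use of quotients, and the invocation of \Cref{L: extend subspaces} and \Cref{L:z_s} are identical to yours.
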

\begin{proof}
By \Cref{L: extend subspaces} a subspace of dimension $t_f$ is contained in $\Phi_{t_f}^n$ generators. So, the only question left is the following. Given a flag $F$ of type $\{t_1,t_2, \ldots ,t_f,n \}$, how many chambers exist that contain $F$. Notice that this is a combinatorial question on vector spaces. Let $T_i$ be the subspace of $F$ that has dimension $t_i$. Put $T_0:=\{0\}$ and $t_0=0$. Then $t_i$ is the dimension of $T_i$ for $i=0,\hdots,f$ and the dimension of $T_i/T_{i-1}$ is $t_i-t_{i-1}$. Using \Cref{L:z_s}, the statement follows.
\end{proof}

The following statement can be proven by induction and a double count using \Cref{L:advanced_comb}. We give an alternative proof using buildings. For all terminology, we refer to \cite{AlgebraicApproach}.

\begin{lemma} \label{L: basicnumbers}
If $C$ is a chamber of $\PS(n,e,q)$ and $S$ a subspace of dimension $s$ with $S\cap C_s^\perp =\{0\}$, then the number of chambers that contain $S$ and are opposite to $C$ is exactly $q^{\deg(z_sz_{n-s}\Phi_s^n)}$.
\end{lemma}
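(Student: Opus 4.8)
The statement asserts a "local" oppositeness count: fixing a chamber $C$ and a subspace $S$ of dimension $s$ that is skew to the tangent space $C_s^\perp$, the number of chambers containing $S$ and opposite to $C$ equals $q^{\deg(z_sz_{n-s}\Phi_s^n)}$. The plan is to count such chambers by peeling them apart along the flag $S \subseteq T \subseteq \text{(rest)}$, where $T$ is the generator of the chamber. First I would reduce to the following two independent sub-counts: (i) the number of generators $T$ with $S \le T$ that are opposite to $C_n$ and satisfy the compatibility needed so that $S$ can still be completed to a chamber opposite $C$; and (ii) given such a $T$, the number of ways to refine the partial flag $S \subseteq T$ to a full chamber that is opposite $C$. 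Because oppositeness of chambers is checked dimension-by-dimension via $F_i^\perp \cap C_i = \{0\}$, these refinement counts naturally factor through the quotient spaces $S$, $T/S$, and $T^\perp/$ (something), matching the three factors $z_s$, $z_{n-s}$, $\Phi_s^n$ in the exponent.

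\textbf{Key steps.} Step 1: Use \Cref{L: basicnumbers}'s own hypothesis $S \cap C_s^\perp = \{0\}$ to show that $S$ is non-degenerate as a subspace of the polar space (its induced form is non-degenerate), so that $S^\perp$ meets $S$ trivially and the polar space decomposes as $S \perp S^\perp$ with $S^\perp$ again a polar space of rank $n-s$ and the same type $e$. Step 2: A chamber opposite $C$ containing $S$ must have its $i$-th element, for $i \le s$, be a subspace of a complement-type structure; I would argue that choosing the flag $0 \subset F_1 \subset \cdots \subset F_s = S$ inside $S$ opposite to the corresponding flag of $C$ contributes a factor counted via \Cref{L: extend flags}-type reasoning, and that the freedom here is $q^{\deg z_s}$ (each refinement step inside a projective geometry contributes its full $q$-power, since genericity with respect to $C$ is an open dense condition). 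Step 3: Symmetrically, the "top part" of the chamber, i.e. the flag from the generator $T$ down through dimensions $n, n-1, \ldots, s$, lives in $T$ and must be opposite the corresponding part of $C$; combined with the choice of the generator $T \supseteq S$ opposite to $C_n$ (there are, generically, $q^{\deg \Phi_s^n}$ of these by a dimension count on \Cref{L: extend subspaces}, since the opposite ones form the complement of finitely many Schubert-type subvarieties), and the $q^{\deg z_{n-s}}$ refinements of $T/S$, we assemble all three factors. Step 4: Multiply and invoke \Cref{L:degree_stuff}(c) to identify $q^{\deg z_s}\cdot q^{\deg z_{n-s}} \cdot q^{\deg \Phi_s^n} = q^{\deg(z_sz_{n-s}\Phi_s^n)}$.

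\textbf{The building-theoretic shortcut.} Rather than grinding through the double counts with \Cref{L:advanced_comb}, the cleaner route — which the paper signals it will take — is to work in the building directly. Fix the chamber $C$ as a base chamber; the chambers opposite $C$ are exactly those in the "big cell", i.e. those in the $U^-$-orbit of the opposite chamber $C'$, and this orbit is a torsor under a unipotent group whose dimension (as a variety over $\ff_q$) is the number of positive roots, giving $|\{\text{chambers opposite }C\}| = q^{\ell(w_0)}$. Imposing the single condition $F_s = S$ (with $S$ generic in the required sense) cuts this down by the codimension corresponding to the parabolic $P_s$, and one reads off that the count is $q^{\ell(w_0) - \ell(w_0^{P_s})}$ or the appropriate analogue; matching this length difference with $\deg(z_sz_{n-s}\Phi_s^n)$ is then the content of \Cref{L:degree_stuff}(c) reinterpreted via the standard correspondence between $q$-degrees of these polar-space counting polynomials and Coxeter lengths.

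\textbf{Main obstacle.} The genuinely delicate point is Step 2/Step 3: justifying that imposing "opposite to $C$" on each refinement step costs \emph{nothing} in the $q$-exponent — i.e. that the generic (opposite) configurations are a constant fraction bounded away from $0$, hence contribute the full $q^{\deg}$ and not $q^{\deg} - O(q^{\deg - 1})$. One must show the count is \emph{exactly} $q^{\deg(z_sz_{n-s}\Phi_s^n)}$, not merely of that order. In the building picture this is automatic — the big cell is exactly an affine space $\ff_q^N$, so sub-counts are exactly $q$-powers — which is precisely why the building proof is preferable to a direct double count, where one would instead need to verify the product telescopes with no lower-order corrections. I would therefore present the building argument as the main proof and relegate the double-count via \Cref{L:advanced_comb} to a remark.
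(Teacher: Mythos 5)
Your ``building-theoretic shortcut'' is indeed the paper's proof, essentially verbatim in spirit: the paper cites Brouwer \cite[Corollary 3.2]{brouwer_eigenvalues}, which says that the number of chambers through a fixed flag of cotype $\{s\}$ that are opposite to $C$ is exactly $q^{\ell}$ with $\ell$ the length of the longest word of $W_J$, $J=[n]\setminus\{s\}$ (so $W_J = W(A_{s-1})\times W(B_{n-s})$), and then matches $\ell$ against $\deg(z_s z_{n-s}\Phi_s^n)$ via \Cref{L:degree_stuff}(c). Your framing via the big cell, the $U^-$-torsor, and the factorization $\ell(w_0)=\ell(w_0^J)+\ell(w_J)$ gives the same exponent; the paper just quotes the result rather than re-deriving it.

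One imprecision worth flagging: you write that the number of chambers opposite $C$ is $q^{\ell(w_0)}$ where $\ell(w_0)$ is ``the number of positive roots.'' That identification only holds verbatim when $e=1$. For polar spaces of other types the building has two parameters, $q$ for the $A$-type panels and $q^e$ for the $B$-type (last) panel, and Brouwer's formula is the weighted version: a reduced word for $w_J$ of Coxeter length $(n-s)^2 + s(s-1)/2$ contributes $q^{s(s-1)/2}\cdot q^{(n-s)(n-s-1)}\cdot (q^e)^{n-s}$, which is how $e$ enters $\deg(z_s z_{n-s}\Phi_s^n)=s(s-1)/2+(n-s)(n-s+e-1)$. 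Your plan is correct but should say ``weighted length'' rather than ``number of positive roots.'' Your first, double-count route is explicitly \emph{not} what the paper does (the paper mentions it only to say it would work by induction with \Cref{L:advanced_comb}); your instinct to relegate it to a remark and lead with the building argument is exactly right.
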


Note that $z_sz_{n-s}\Phi_s^n$ is exactly the number of chambers that contain a given $s$-space. Since the leading coefficient of $z_sz_{n-s}\Phi_s^n$ is $1$, the ``majority'' of chambers through the given $s$-space is opposite to $C$ in the above lemma.

\begin{proof}
This follows from \cite[Corollary 3.2]{brouwer_eigenvalues}, which states that this number is exactly $q^{\ell}$, where $\ell$ is the length of the longest word in the Coxeter group $W_J \leq W(B_n)$, $J = [n] \setminus \{s\}$. In this case $W_J = W(A_{s-1}) \times W(B_{n-s})$ and so $\ell = s(s-1)/2+(n-s)(n-s+e-1)$. See also \cite[Remark 2.19]{AlgebraicApproach}. The statement follows from \Cref{L:degree_stuff}.
\end{proof}

\begin{lemma}\label{L: heavy power m-space}
    Let $S$ be an $s$-space and $M$ an $m$-space of $\PS(n,e,q)$ with $m \leq s$, such that $M\cap S^\perp=\{0\}$. Then the number of $s$-spaces containing $M$, opposite to $S$ is 
    $q^{\deg(\Phi_m^s)}$.
\end{lemma}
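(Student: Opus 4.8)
The statement counts $s$-spaces $T$ through a fixed $m$-space $M$ that are opposite to a fixed $s$-space $S$, under the genericity hypothesis $M \cap S^\perp = \{0\}$. The plan is to reduce this to the quotient polar space obtained by passing to $M^\perp / M$, where opposition becomes a rank condition, and then to apply \Cref{L:advanced_comb} (or \Cref{L: basicnumbers}) in that quotient. First I would observe that since $M \cap S^\perp = \{0\}$ and $M \le S$, the subspace $M$ is non-degenerate, so $M^\perp / M$ carries the structure of a polar space $\PS(n-m, e, q)$ of rank $n-m$ and the same type $e$. An $s$-space $T$ containing $M$ corresponds to an $(s-m)$-space $\bar T$ of this quotient, and one checks that $T$ is opposite to $S$ in $\PS(n,e,q)$ if and only if $\bar T$ is skew to $\bar S := (S \cap M^\perp + M)/M$; here the hypothesis $M \cap S^\perp=\{0\}$ guarantees $\dim(S \cap M^\perp) = s - m$, so $\bar S$ is an $(s-m)$-space of the quotient, and its tangent space in the quotient is $\bar S^\perp = (S^\perp + M)/M$, which meets $\bar T$ trivially exactly when $T$ is opposite to $S$.

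Granting this translation, I would count the $(s-m)$-spaces of $\PS(n-m,e,q)$ skew to a fixed $(s-m)$-space. This is the special case $m' = j' = l' = s-m$, $k' = 0$ (in the notation of \Cref{L:advanced_comb}) — no, more directly it is the $m=s$ case of \Cref{L: heavy power m-space} itself applied in the quotient with the trivial subspace, i.e.\ the number of generators-of-fixed-rank skew to a fixed one. Concretely, setting $j=0$ in \Cref{L:advanced_comb} with ambient rank $n-m$, $U$-dimension $s-m$, and counting $(s-m)$-spaces meeting $U$ in $0$ and $U^\perp$ in $0$ (so $j=0$, $k+l = s-m$, and skewness forces all intersection with $U^\perp$ to be zero, hence... ) — I would instead simply invoke \Cref{L: basicnumbers} in the quotient: the number of $(s-m)$-subspaces skew to the tangent space of a fixed $(s-m)$-space equals $q^{\deg(z_{s-m} z_{(n-m)-(s-m)} \Phi_{s-m}^{n-m})}$ divided by $z_{s-m} z_{n-s}$... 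Actually the cleanest route is a direct application of \Cref{L:advanced_comb} with parameters $m \mapsto s-m$, $j \mapsto 0$, $k\mapsto 0$, $l \mapsto s-m$ in ambient rank $n-m$: this yields the number of $(s-m)$-spaces meeting $\bar S$ trivially and $\bar S^\perp$ in a $(s-m)$-space, which is the skew count, namely $q^{(s-m)((n-m)+e-(s-m)) \cdot \text{something}} \Gauss{n-m-(s-m)}{s-m}\cdots$; collecting exponents and simplifying should produce exactly $q^{\deg(\Phi_m^s)}$.

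The main obstacle I anticipate is the bookkeeping in the last step: verifying that the power of $q$ coming out of \Cref{L:advanced_comb} in the quotient — together with the leading-term degree of the relevant Gaussian-coefficient-and-product expression — collapses to the clean closed form $q^{\deg(\Phi_m^s)}$ with $\deg(\Phi_m^s) = (s-m)(2n-s-m+e-\frac{s-m+1}{2})$ from \Cref{L:degree_stuff}(b). This is purely a Gauss-summation exponent match, but it is the step where sign and off-by-one errors creep in, so I would carry it out carefully by writing $\deg(\Phi_m^s)$ as $\deg(\Phi_0^{s-m})$ evaluated in rank $n-m$ — indeed $\Phi_0^{s-m}(n-m,e,q)$ counts all $(s-m)$-spaces of the quotient, and among those the fraction skew to a fixed one is, by \Cref{L: basicnumbers}-type reasoning, a power of $q$ whose exponent is precisely $\deg(\Phi_0^{s-m}(n-m,e,q)) = \deg(\Phi_m^s(n,e,q))$ because the extra non-leading $(q^{\ast}+1)$ factors contribute nothing to the degree. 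That identification of degrees is the conceptual heart; once it is in place the proof is a short chain of ``non-degenerate $M$ $\Rightarrow$ quotient polar space $\Rightarrow$ opposition becomes skewness $\Rightarrow$ apply the rank-$(n-m)$ count.''
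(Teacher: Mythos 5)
Your overall strategy is the same as the paper's: pass to the quotient $M^\perp/M \cong \PS(n-m,e,q)$, translate opposition of $s$-spaces through $M$ into opposition of $(s-m)$-spaces in the quotient, and count via \Cref{L:advanced_comb}. But the parameter substitution you settle on is wrong. You propose $j = 0$, $k = 0$, $l = s-m$, which by that lemma counts $(s-m)$-spaces $\bar T$ meeting $\bar S$ in a $0$-space and $\bar S^\perp$ in a $(j+l) = (s-m)$-space, i.e.\ $\bar T \subseteq \bar S^\perp$. This is not the opposition count — those $\bar T$ are all \emph{non}-opposite to $\bar S$ — and the resulting formula is not even a pure power of $q$, so it cannot reduce to the claimed answer. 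The opposition condition $\bar T \cap \bar S^\perp = \{0\}$ forces $j+l=0$, hence $j=l=0$ and $k = s-m$ (you actually note this in passing — ``skewness forces all intersection with $U^\perp$ to be zero'' — but then abandon it). With $j=l=0$, $k=s-m$ in ambient rank $n-m$, the $l$-indexed product is empty, all Gaussian coefficients equal $1$, and the exponent collapses to $(s-m)(2n-s-m+e-\tfrac{s-m+1}{2}) = \deg(\Phi_m^s)$, which is exactly what the paper's proof does. The detour through \Cref{L: basicnumbers} is also a dead end: that lemma is about chambers, not $s$-spaces, and the division you sketch is not justified.

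A smaller but real misstep in the set-up: $M \leq S$ is not a hypothesis and is in fact incompatible with $M \cap S^\perp = \{0\}$ for $m \geq 1$, since $S$ is totally isotropic, so $M \leq S$ would give $M \leq S \leq S^\perp$. Also $M$, being a subspace of the polar space, is totally isotropic, so calling it ``non-degenerate'' is backwards; the quotient $M^\perp/M$ is a polar space because $M$ is the radical of the degenerate polar space $M^\perp \cap \PS(n,e,q)$, not because $M$ is non-degenerate. Your computations of $\dim(S \cap M^\perp) = s-m$ and the identification of $\bar S$ are correct, so the translation to the quotient does go through once these justifications are repaired.
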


We also observe at this point that the number of $s$-spaces through $M$ in the above lemma is $\Phi_m^s$. Since the leading coefficient of $\Phi_m^s$ is $1$, the ``majority'' of $s$-spaces through $M$ is opposite to $S$.

In the following proof, and throughout \Cref{S:EKR_s-spaces}, we will use the notion of \textit{quotient polar space}. Recall that given a degenerate polar space of rank $n$ with radical $R$ of rank $r$, we can quotient out the radical to obtain a non-degenerate polar space of rank $n-r$ (but of the same type and defined over the same field). In this way, whenever we want to argue about the subspaces incident with $R$, it suffices to investigate the quotient space. For such a $t$-subspace $T$ incident with $R$, we will denote by $T/R$ its quotient, which is a $(t-r)$-subspace of the polar space of rank $n-r$. In particular, we will use this idea when we want to argue about the subspaces incident with a given $m$-space in a $\PS(n,e,q)$: we can consider the degenerate polar space $M^\perp \cap \PS(n,e,q)$ with radical $M$ and quotient $M$ out to obtain a polar space denoted by $M^\perp/M$. For this non-degenerate polar space, we have $M^\perp /M \cong\PS(n-m,e,q)$. 

\begin{proof}
    From $M \cap S^\perp = \{0\}$ it follows that $M^\perp \cap S$ is an $(s-m)$-dimensional space. We will now pass to the quotient space $M^\perp /M$.  Observe that an $s$-space $T$ through $M$ is opposite to $S$ if and only if $T/M$ is opposite to $\langle M, M^\perp \cap S \rangle /M$ in this polar space of rank $n-m$. In other words, we need to count the number of $(s-m)$-spaces opposite to a given $(s-m)$-space in $\PS(n-m,e,q)$. This number equals the degree of the relevant opposition graph, which is known to be $q^{(s-m)(2n-s-m+e-(s-m-1)/2)} = q^{\deg(\Phi_m^s)}$, but one can also deduce it from scratch using \Cref{L:advanced_comb} with $j=l=0$ and $k=s-m$.
\end{proof}

\begin{lemma} \label{L: hyperplane intersection}
  Consider a (possibly degenerate) polar space whose number of points is $N$, and a hyperplane $H$ of the ambient projective space. Then the number of points $N'$ of the polar space in $H$ satisfies $\deg(N') = \deg(N)-1$.
\end{lemma}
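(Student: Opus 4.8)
The statement concerns a polar space whose point set has size $N$ (as a $q$-polynomial) and asks to show that intersecting with a projective hyperplane $H$ drops the degree by exactly one. The plan is to reduce to the non-degenerate case and then to a direct computation using the known formula for the number of points. First I would handle the degenerate case: if the polar space has radical $R$ of rank $r$, then either $R \leq H$ or not. If $R \not\leq H$, then every point of the polar space in $H$ together with $R$ spans a totally isotropic space, so such points correspond to points of the quotient polar space $R^\perp/R$ lying in the hyperplane $(H \cap R^\perp)/R$, and $N$ is $q^{r-1}\cdot\frac{q^{r}-1}{q-1}^{-1}$... more cleanly: $N = \gauss{r}{1}_q + q^r N_0$ where $N_0$ is the point count of the non-degenerate quotient, and a parallel identity holds for $N'$; comparing degrees reduces everything to the non-degenerate quotient. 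If $R \leq H$, one passes to the quotient directly. So it suffices to prove the claim for a non-degenerate $\PS(n,e,q)$, where $N = \Phi_0^1 = \gauss{n}{1}_q \prod_{i=1}^{1}(q^{n+e-1-i+... }+1)$; concretely $N = \frac{q^n-1}{q-1}(q^{n+e-1}+1)$ up to the indexing conventions of \Cref{L: extend subspaces}, so $\deg(N) = (n-1) + (n+e-1) = 2n+e-2$.

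Next I would compute $\deg(N')$ for the non-degenerate case. The points of $\PS(n,e,q)$ inside a hyperplane $H$ of the ambient $\PG$ are exactly the points of the (possibly degenerate) polar space $H \cap \PS(n,e,q)$. The key dichotomy is whether $H$ is tangent (i.e.\ $H = P^\perp$ for some point $P$ of the polar space) or not. In the generic (non-tangent) case, $H \cap \PS(n,e,q)$ is a non-degenerate polar space of rank $n-1$ of an appropriate type $e'$ with $e' \in \{e-1, e, e+1\}$ depending on the parity/type, and in all cases the Witt-index drop and type shift are such that $\deg(N') = \deg(N) - 1$; this is the standard fact that a hyperplane section lowers rank by one while possibly changing the type, but the total quantity $2n+e-2$ (which only depends on $2n+e$) behaves additively. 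In the tangent case $H = P^\perp$, the section $H \cap \PS$ is degenerate with radical $\langle P\rangle$ and quotient a non-degenerate $\PS(n-1,e,q)$ of the \emph{same} type, giving $N' = 1 + q\cdot|\PS(n-1,e,q)_{\text{pts}}|$, whose degree is $1 + 1 + \deg\big(\text{pts of }\PS(n-1,e,q)\big) = \deg(N)-1$ as well. Either way the conclusion holds, so I would simply present both cases (or, better, give a uniform double-counting argument below).

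A cleaner uniform argument I would try instead: double-count incident pairs $(P, H')$ where $P$ is a point of the polar space and $H'$ ranges over hyperplanes of $\PG$ containing $P$. The number of hyperplanes of $\PG(V)$ through a fixed point is $\gauss{d-1}{d-2}_q$ where $d = \dim V$, independent of $P$, while the total number of hyperplanes is $\gauss{d}{d-1}_q = \gauss{d}{1}_q$. Hence the average value of $N'$ over all hyperplanes is $N \cdot \gauss{d-1}{1}_q / \gauss{d}{1}_q$, and $\deg\big(\gauss{d-1}{1}_q/\gauss{d}{1}_q\big) = (d-2)-(d-1) = -1$, so the average has degree $\deg(N)-1$. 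This gives $\deg(N') \geq \deg(N)-1$ for at least one hyperplane; for the upper bound $\deg(N') \leq \deg(N)-1$ for \emph{every} hyperplane, note that $N - N'$ counts points of the polar space off $H$, and a short argument (e.g.\ fixing a point $P \notin H$ and counting lines through $P$, or just that the complement of $H$ meets the polar space in a set whose size has the same leading term as $N$) shows $N - N'$ has degree $\deg(N)$ with the same leading coefficient as $N$, forcing $\deg(N') < \deg(N)$, i.e.\ $\deg(N') \leq \deg(N) - 1$. Combining, $\deg(N') = \deg(N)-1$ exactly.

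\textbf{Main obstacle.} The main subtlety is the leading-coefficient bookkeeping: one must verify that $N - N'$ (points outside $H$) genuinely has the \emph{same} leading term as $N$ — equivalently that a positive-degree fraction of the polar space lies outside any hyperplane — rather than a mere degree bound, since otherwise cancellation could in principle keep $\deg(N')$ equal to $\deg(N)$. I expect this is handled most safely by the explicit case analysis on whether $H$ is tangent, using \Cref{L: extend subspaces} (or \Cref{L:advanced_comb}) to write down $N'$ exactly in each case and reading off the degree; the averaging argument then serves as a clean cross-check rather than the primary proof.
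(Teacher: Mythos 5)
Your primary argument --- reduce to the non-degenerate case and then split on whether $H$ is tangent or not, appealing to the standard classification of hyperplane sections of a non-degenerate polar space and reading off $\deg(N')=\deg(N)-1$ from explicit point counts --- is exactly the approach the paper takes. One small factual correction: in the non-tangent case the Witt index does not always drop to $n-1$. For instance, a non-tangent hyperplane section of a parabolic quadric $Q(2m,q)$ (with $(n,e)=(m,1)$) can be a hyperbolic quadric $Q^+(2m-1,q)$ (with $(n,e)=(m,0)$), which has the same rank $m$; the same phenomenon occurs for Hermitian spaces. What is invariably true, as you correctly isolate, is that $2n+e$ drops by exactly one across a hyperplane section, and since $\deg(N)=2n+e-2$ that is all the degree computation needs. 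Your ``cleaner uniform'' averaging argument, as you yourself note, only yields $\deg(N')\geq\deg(N)-1$ for \emph{some} hyperplane, not for an arbitrary one, so it cannot replace the case analysis; it is at best a consistency check.
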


\begin{proof}
    Suppose that the polar space is non-degenerate. Then it is well-known (see for example \cite[Proposition 1.5.6]{Valentino_thesis}) that a hyperplane $H$ of the ambient projective space intersects it either in a non-singular polar space whose rank and type depends on the original polar space or in a degenerate polar space of rank 1 lower, same type and radical of dimension 1. In all cases, the equality holds.

    Now suppose that the polar space has radical $R$ of dimension $r > 0$. Then $H$ intersects either $R$ in a codimension 1 space and contains the base (which is a non-degenerate polar space), or it contains $H$ and intersects the base in one of two ways described in the preceding paragraph. In both cases, again the equality holds.
\end{proof}

\section{Graph homomorphisms for opposition graphs}
\label{Subsection: Maximal examples}

 Some examples coming from \Cref{E: blow-ups of s-spaces} were already described in Theorem 3.18 of \cite{AlgebraicApproach}. We provide some insight into how one can think of these examples from a graph-theoretic point of view. We also give a concrete mathematical description to the term `blow-up' mentioned in the introduction and will allow us to prove \Cref{C: classification for flags} and \Cref{thm:main2}. \\

For a finite simple graph $\Gamma$ we use $\alpha(\Gamma)$ to denote the independence number of $\Gamma$. In view of \Cref{thm:upperbound} we have
\begin{align*}
    \frac{|V(\Gamma_J)|}{\alpha(\Gamma_J)}=q^{n+e-1}+1
\end{align*}
for all instances in which the bound of said theorem is tight. Since $\Gamma_J$ is vertex transitive for all $J \subseteq [n]$, we have that the quotient stated above is equal to the fractional chromatic number if the bound of \Cref{thm:upperbound} is tight, see Corollary 7.5.2 in \cite{algebraic_graph_theory}. The bound of \Cref{thm:upperbound} is tight for $\Gamma_{[n]}$ and $\Gamma_1$. 

We can define a graph homomorphism $\varphi$ from $\Gamma_{[n]}$ to $\Gamma_1$ if we map every chamber of $\Gamma_{[n]}$ to its point. Adjacent vertices are opposite chambers, and opposite chambers contain opposite points. Therefore, this homomorphism indeed preserves adjacency. It follows immediately that the preimage of a maximum independent set of $\Gamma_1$ under $\varphi$ is a maximum independent set of $\Gamma_{[n]}$, see for example \cite[Lemma 7.5.4]{algebraic_graph_theory}. 
The description of maximum independent sets of $\Gamma_1$ (see \Cref{E: EKR for s-spaces}) hence gives a description of the maximum independent sets of $\Gamma_{[n]}$ that are the preimage of a maximum independent set of $\Gamma_1$ under $\varphi$.

In general, the procedure outlined above works for any $J \subseteq [n]$.

\begin{notation}
    Let $\EKR_J$ be an EKR-set of flags of type $J$. If $\EKR$ is the set of all chambers that contain a flag of type $J$ in $\EKR_J$, we call $\EKR$ the \emph{blow-up} of $\EKR_J$.
\end{notation}

\begin{remark} \label{R: blow-up}
Let $\EKR_J$ be an EKR-set of flags of type $J$. Then the blow-up of $\EKR_J$ is a ratio-sharp EKR-set of chambers if and only if $\EKR_J$ is ratio-sharp.
\end{remark}

For $\Gamma_n$ we also know that the ratio bound is tight (see \cite{stanton_ekr_chevalleygroups}) and the maximum independent sets are classified in \cite{EKR_polarspace_generators}.

With this in mind, we now prove \Cref{C: classification for flags} and \Cref{thm:main2} given \Cref{thm:main}. First, we need a technical lemma.



\begin{lemma}
Let $i\in \{1,n\}$. Furthermore, let $\EKR_i$ be a maximum EKR-set of $i$-spaces and let $J\subseteq [n]$ with 
$i\notin J$.Then $\EKR_i$ contains two $i$-spaces $I_1$ and $I_2$ that can be extended to flags $F_1$ and $F_2$ of type $\{i\}\cup J$ such that $F_1\setminus\{I_1\}$ and $F_2\setminus \{I_2\}$ are opposite.
\end{lemma}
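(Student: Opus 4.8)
The plan is to treat the two cases $i=1$ and $i=n$ separately, since the relevant extremal structures for points and for generators are different. In both cases the idea is the same: $\EKR_i$ is large (its size is the degree of a large-degree $q$-polynomial), while the set of "bad" pairs -- pairs of $i$-spaces in $\EKR_i$ for which \emph{every} extension to a flag of type $\{i\}\cup J$ fails to be "opposite away from the $i$-space" -- is governed by lower-degree quantities, so for $q$ large enough a good pair must exist. Concretely, fix $I_1\in\EKR_i$. I would like to choose $I_2\in\EKR_i$ and then choose the remaining subspaces of $F_1$ and $F_2$ (those of the types in $J$) so that, type by type, the chosen subspace of $F_1$ and the chosen subspace of $F_2$ are opposite. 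Write $J=\{t_1<\dots<t_f\}$.

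First consider $i=n$, so $\EKR_n$ is a maximum EKR-set of generators; by our standing assumptions ($e\ge1$ or $n$ even) and \Cref{E: EKR for s-spaces}, a maximum such set contains all generators through a fixed point $P$ (in the cases where example (c) or (d) applies one still finds a point-pencil of the required size inside, or argues directly, but the cleanest route is: whatever the maximum family is, it contains two skew generators $\pi_1,\pi_2$, because a family in which all generators pairwise meet would be far too small -- again a degree comparison, or simply the observation that a maximum EKR-set of generators has size $\Phi_1^n$ which exceeds the number of generators through any point is false, so instead: within the point-pencil through $P$ there are two generators meeting exactly in $P$, and $P$ is the only common subspace we must avoid being skew in, which is fine since $n\notin J$). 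Given such $\pi_1,\pi_2$ meeting only in a subspace disjoint from the ambient data we care about, inside $\pi_1$ (a projective $(n-1)$-space) and $\pi_2$ we can choose flags of type $J$ whose members of equal type span complementary subspaces: this is the standard fact that two disjoint subspaces of a projective space admit "opposite" flags of any prescribed type, and it forces $F_i^\perp\cap F_i'=\{0\}$ for $i\in J$ since $F_i\le\pi_1$, $F_i'\le\pi_2$ are skew and one checks the perp condition using that $\pi_1,\pi_2$ are generators. Set $I_1=\pi_1$, $I_2=\pi_2$.

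Next consider $i=1$, so $\EKR_1$ is the set of all points of a fixed generator $\pi$. Pick any hyperplane $H$ of $\pi$ (as a projective space) and two points $I_1=P_1$, $I_2=P_2$ of $\pi$ not in $H$ and distinct; the quotient idea of \Cref{L: heavy power m-space} is not needed here. I would build $F_1$ inside the flag completion using subspaces of $\pi$ and $F_2$ using subspaces "spread out" from $P_2$: more carefully, since $P_1,P_2\in\pi$ with $P_1\ne P_2$, and since $n\notin J$, every type $t\in J$ satisfies $1<t<n$, so there is room; one chooses $F_1$ to be a flag of type $J$ with each $(F_1)_t\le\pi$ containing $P_1$, and $F_2$ a flag of type $J$ with each $(F_2)_t$ containing $P_2$, arranging $(F_1)_t$ and $(F_2)_t$ to be opposite for each $t$. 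The existence of such a compatible pair is again a genericity statement: inside the relevant quotient polar spaces, opposite pairs of subspaces of a fixed type are the "majority" (as noted repeatedly after Lemmas~\ref{L:advanced_comb}--\ref{L: heavy power m-space}), so a greedy construction type-by-type from $t_1$ up to $t_f$ succeeds once $q$ is large, because at each stage the subspaces already chosen impose only polynomially-many constraints of strictly smaller degree than the number of admissible continuations.

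The main obstacle I anticipate is making the "greedy, type-by-type" construction clean: one must ensure simultaneously that $F_1$ and $F_2$ are genuine flags (nested), that $(F_1)_t\perp$-meets $(F_2)_t$ trivially for all $t\in J$, and -- crucially -- that this can be done without also being forced into a bad configuration at the point/generator level, which is exactly where the hypothesis $i\notin J$ is used (it frees up the coordinate we are not allowed to control). I would handle this by induction on $f$: extend an opposite pair at types $\{t_1,\dots,t_{j-1}\}$ to one at types $\{t_1,\dots,t_j\}$ by passing to the quotient by the previously-chosen $(F_1)_{t_{j-1}}$ in one copy and $(F_2)_{t_{j-1}}$ in the other, using \Cref{L:advanced_comb} (with the appropriate parameters) to see that the number of valid choices has strictly larger degree in $q$ than the number of choices to avoid, hence is positive for $q$ large in terms of $n$. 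The base case $j=1$ is the generator-versus-generator or point-versus-point statement handled above.
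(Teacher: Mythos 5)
Your overall strategy — handle $i=1$ and $i=n$ separately, induct on $|J|$, and extend opposite partial flags type by type — matches the paper's approach in spirit. However, there is a genuine gap in the $i=1$ case. You propose to build $F_1$ with each $(F_1)_t \le \pi$ (where $\pi$ is the generator whose points form $\EKR_1$), and $F_2$ a flag through $P_2 \in \pi$. This cannot work: since $\pi$ is a generator, $(F_1)_t \le \pi$ forces $(F_1)_t^\perp \supseteq \pi^\perp = \pi \ni P_2$, and since $F_2$ is a flag through $P_2$ we also have $P_2 \in (F_2)_t$; hence $P_2 \in (F_1)_t^\perp \cap (F_2)_t$, so $(F_1)_t$ and $(F_2)_t$ are \emph{never} opposite. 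The paper's proof sidesteps this precisely by \emph{not} keeping the extension inside $\pi$: one first picks a $t_1$-space $S \ni P_1$ with $P_2 \notin S^\perp$ (such $S$ necessarily leaves $\pi$), and only then an $S' \ni P_2$ opposite to $S$, continuing inductively.

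Two smaller issues in the $i=n$ case. First, the passage contemplating "two skew generators $\pi_1,\pi_2$" inside $\EKR_n$ is impossible — skew generators are opposite, contradicting that $\EKR_n$ is an EKR-set — so that line of reasoning has to be discarded (you do seem to catch this mid-sentence and land on the correct observation that two generators through a common point $P$ can meet exactly in $P$, which is what the paper uses). Second, the claim that one can pick $(F_1)_t\le\pi_1$ and $(F_2)_t\le\pi_2$ with $(F_1)_t^\perp\cap(F_2)_t=\{0\}$ is the crux; it is not merely "complementary subspaces in two skew projective spaces" since opposition involves the polar $\perp$. The paper pins this down via the observation that for each $m<n$ there is an $m$-space $M\le\pi_1$ with $M^\perp\cap\pi_2$ of dimension exactly $n-m$; your writeup gestures at it but does not verify it. Finally, your invocation of genericity and "$q$ large enough" is unnecessary: the lemma holds for all $q$, and the existence of the required opposite extensions at each inductive step is a direct consequence of the building axioms (any two subspaces with $M_1\cap M_2^\perp=\{0\}$ lie in opposite chambers), not a degree-counting asymptotic.
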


\begin{proof}
    First, let $J=\{t_1,\ldots,t_f \}$ with $t_1<\ldots<t_f$. We start with the case $i=1$ and prove the claim via induction on $f$. Let $P_1$ and $P_2$ be any two distinct points in $\EKR_1$. For any dimension $s$ with $s>1$, there is an $s$-space $S$ incident with $P_1$, such that $S^\perp\cap P_2=\{0\}$. Since $P_2$ is a point not in $S^\perp$, there exists an $s$-space incident with $P_2$ that is opposite to $S$. This shows the result for $f=1$.

    Now assume $f>1$. By the induction hypothesis we can find two flags $F_1'$ and $F_2'$ of type $\{ 1,t_1,\ldots,t_{f-1}\}$ with $F_1'\setminus\{P_1\}$ and $F_2'\setminus\{P_2\}$ being opposite. Let $M_i$ be the $t_{f-1}$-space of $F_i'$ for $i=1,2$. In particular $M_1\cap M_2^\perp=\{0\}$. Since there are opposite chambers in $\PS(n,e,q)$ containing $M_1$ and $M_2$ respectively, there exist $t_f$-spaces $S_1$ incident with $M_1$, and $S_2$ incident with $M_2$, such that $S_1^\perp \cap S_2=\{0\}$.
    By taking $F_1:=F_1'\cup \{ S_1\}$ and $F_2:=F_2'\cup \{S_2 \}$ we obtain the result. 

    Now we consider the case $i=n$. Note that in the constructions described in \Cref{E: EKR for s-spaces} (b)-(d) there are always two generators that meet only in a point. Let $\pi_1$ and $\pi_2$ be two such generators. For any dimension $m<n$ there exists an $m$-space $M$ incident with $\pi_1$ such that $M^\perp$ meets $\pi_2$ only in dimension $n-m$. The statement follows by an inductive argument analogue to the case $i=1$, we leave the details to the reader. 
\end{proof}

\begin{proof}[Proof of \Cref{C: classification for flags} and \Cref{thm:main2}]
Let $J\subseteq [n]$ and suppose that $\EKR_J$ is a ratio-sharp EKR-set of flags of type $J$. By \Cref{R: blow-up} the blow-up $\EKR$ of $\EKR_J$ is a ratio-sharp EKR-set of chambers. 
In \Cref{C: classification for flags} and \Cref{thm:main2} we have that $q$ is large enough, so we can apply \Cref{thm:main} and we have that there is a ratio-sharp EKR-set $\EKR_i$ of $i$-spaces with $i\in \{1,n\}$, such that $\EKR$ is the blow-up of $\EKR_i$.

First assume $i\notin J$. The previous lemma implies that there are two chambers in $\EKR$ whose flags of type $J$ are opposite. This stands in contradiction to $\EKR$ being a blow-up of $\EKR_J$. As $1,n\notin J$ implies $i\notin J$, this proves \Cref{thm:main2}.

We have $i\in J$. Now assume that $\EKR_J$ is not the set of all flags of type $J$ that contain an $i$-space of $\EKR_i$. Since $\EKR$ is a blow-up of $\EKR_J$ and $i\in J$ this implies that $\EKR$ is not the blow-up of $\EKR_i$, which is a contradiction.
Hence  $\EKR_J$ is the set of all flags of type $J$ that contain an $i$-space of $\EKR_i$. This proves \Cref{C: classification for flags}.
%
%
\end{proof}



We turn our attention back to the graph homomorphism between $\Gamma_J$ and $\Gamma_{[n]}$ for general $J \subseteq [n]$.
The relation between them provides us even more information by `linearizing' the graph homomorphism. To be precise, we can construct an equitable partition, see e.g.\  \cite[Section 2.2]{GodsilMeagher} for more background, of $\cF_{[n]}$ by partitioning the chambers according to the flag of type $J$ they contain. With this partition, there is a corresponding quotient matrix $Q$, indexed by $\cF_J$, and incidence matrix $M$, whose rows and columns are indexed by $\cF_{[n]}$ and $\cF_J$ respectively, so that if $A_{[n]}$ is the adjacency matrix of $\Gamma_{[n]}$, we have the equality
\begin{align}\label{eq:quotientmatrixrelation}
    A_{[n]}M = MQ.
\end{align}

In fact, by Brouwer's result \cite[Corollary 3.2]{brouwer_eigenvalues}, we know that the quotient matrix is $Q=q^\ell\cdot A_J$, where $\ell$ is the length of the longest word in the corresponding Coxeter group $W_J$ and $A_J$ is the adjacency matrix of $\Gamma_J$, see \Cref{L: basicnumbers}. In any case, we find a relation between $A_{[n]}$ and (a scalar multiple of) $A_J$, by the matrix $M$. 

Observe that given an eigenvector $v$ for $Q$ with eigenvalue $\lambda$, it follows from \eqref{eq:quotientmatrixrelation} that $Mv$ is an eigenvector for $A_{[n]}$ with the same eigenvalue. This relies in particular on the fact that $M$ defines an injective map $\mathbb{R}^{|\cF_J|} \to \mathbb{R}^{|\cF_{[n]}|}$. As a result, we obtain the following lemma.

\begin{lemma}\label{lem:eigenvectorsthroughM}
    Let $J \subseteq [n]$. Suppose that $v,w \in \mathbb{R}^{|\cF_J|}$ are eigenvectors of $Q$ corresponding to different eigenvalues. Then $Mv$ and $Mw$ are orthogonal eigenvectors of $A_{[n]}$.
\end{lemma}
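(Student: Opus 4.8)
The plan is to exploit the intertwining relation \eqref{eq:quotientmatrixrelation}, $A_{[n]}M = MQ$, together with two structural facts about the incidence matrix $M$: that it is injective as a linear map $\mathbb{R}^{|\cF_J|} \to \mathbb{R}^{|\cF_{[n]}|}$, and that — being (a scalar multiple of) the incidence matrix of an equitable partition — it has a useful interaction with the standard inner product. First I would record that if $Qv = \lambda v$, then $A_{[n]}(Mv) = M(Qv) = \lambda (Mv)$, so $Mv$ is an eigenvector of $A_{[n]}$ for the eigenvalue $\lambda$, provided $Mv \neq 0$; and $Mv \neq 0$ precisely because $M$ is injective and $v \neq 0$ (an eigenvector is nonzero by definition). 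Apply this to both $v$ and $w$ to get that $Mv$ and $Mw$ are genuine (nonzero) eigenvectors of $A_{[n]}$.

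Next I would establish orthogonality. Since $Q = q^\ell A_J$ with $A_J$ symmetric, $Q$ is a symmetric matrix, hence eigenvectors $v, w$ of $Q$ for distinct eigenvalues $\lambda \neq \mu$ are orthogonal: $v^\top w = 0$. I then need to transfer this to $Mv$ and $Mw$, i.e.\ show $(Mv)^\top (Mw) = v^\top (M^\top M) w = 0$. The key observation is that $M^\top M$ is a diagonal matrix: the $(F,F')$ entry of $M^\top M$ counts the number of chambers containing both the flag $F$ and the flag $F'$ of type $J$, which is $0$ unless $F = F'$, in which case it equals the number of chambers containing $F$ — a constant $c$ (namely $c = \Phi_{t_f}^n z_{t_1} z_{n-t_f}\prod_{i=2}^f z_{t_i - t_{i-1}}$ by \Cref{L: extend flags}), independent of $F$ by vertex-transitivity. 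Hence $M^\top M = cI$ with $c > 0$, so $(Mv)^\top(Mw) = c\, v^\top w = 0$. Alternatively, and perhaps more cleanly, one can argue directly that $A_{[n]}$ is symmetric, so its eigenvectors $Mv, Mw$ for the distinct eigenvalues $\lambda, \mu$ are automatically orthogonal — this avoids computing $M^\top M$ altogether, but it does require first confirming that $\lambda \neq \mu$ forces the two $A_{[n]}$-eigenvectors to lie in distinct eigenspaces, which is immediate since $Mv, Mw$ have eigenvalues $\lambda, \mu$ respectively.

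I would actually present the self-contained version: $A_{[n]}$ is a real symmetric matrix; $Mv$ and $Mw$ are nonzero eigenvectors for the eigenvalues $\lambda$ and $\mu$ of $A_{[n]}$; since $\lambda \neq \mu$, they lie in orthogonal eigenspaces of $A_{[n]}$ by the spectral theorem, so $Mv \perp Mw$. The only genuine content to verify carefully is the non-vanishing $Mv, Mw \neq 0$, which rests entirely on injectivity of $M$; this injectivity is already asserted in the paragraph preceding the lemma (it holds because $M$ has a nonzero constant number of ones in each column and, more to the point, because distinct flags of type $J$ extend to disjoint chamber-sets, so the columns of $M$ have disjoint supports and are thus linearly independent).

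The main obstacle, such as it is, is purely expository rather than mathematical: making sure the reader sees that $Mv \neq 0$ (otherwise "$Mv$ is an eigenvector of $A_{[n]}$ with eigenvalue $\lambda$" is vacuous) and that one is entitled to invoke the spectral theorem — i.e.\ that $A_{[n]}$ really is symmetric, which holds because the opposition relation is symmetric. Beyond that the argument is a two-line application of \eqref{eq:quotientmatrixrelation}. One should also note the hypothesis "$v, w$ correspond to different eigenvalues" is exactly what is needed, and flag the (trivial) point that the columns of $M$ having disjoint supports is what gives injectivity, in case a referee wants that spelled out.
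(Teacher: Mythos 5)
Your proposal is correct and ultimately takes the same route as the paper: use the intertwining relation $A_{[n]}M = MQ$ together with injectivity of $M$ to see that $Mv$ and $Mw$ are nonzero eigenvectors of $A_{[n]}$ for the two distinct eigenvalues, then invoke symmetry of $A_{[n]}$ to conclude orthogonality. The alternative you sketch via $M^\top M = cI$ is a valid detour but unnecessary, and you correctly discard it in favour of the cleaner argument the paper also uses.
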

\begin{proof}
    By our observation, $Mv$ and $Mw$ are eigenvectors for $A_{[n]}$ for different eigenvalues and since $A_{[n]}$ is symmetric, it follows that they are orthogonal.
\end{proof}

\section{Antidesigns}\label{sec:antidesigns}

In this section, we will construct antidesigns for ratio-sharp EKR-sets of flags of type $J$ for $J \subseteq [n]$. By equality in the ratio bound, we know that their characteristic vectors are contained in the sum of eigenspaces corresponding to the largest and smallest eigenvalue. When $e > 1$ or $e \in \{0,1\}$ and $n$ even, we have precise control of how the eigenspaces of these eigenvalues look like by \cite{AlgebraicApproach2}. Remarkably, the proofs in this section do not depend on this precise description. It was however instrumental in finding the antidesigns, even though the current iteration of the proof no longer mentions it.

\begin{de}\label{def:antidesign}
Let $J \subseteq [n]$. An \emph{antidesign} of $\Gamma_J$ is a vector in $\mathbb{R}^{|\cF_J|}$ orthogonal to the eigenspace corresponding to the smallest eigenvalue of $\Gamma_J$.
\end{de}

We also view a vector $v \in \mathbb{R}^{|\cF_J|}$ as a map $\cF_J \to \mathbb{R}$ from the set of all flags of type $J$ to $\mathbb{R}$; then $v(F)$ is the entry of $v$ in the position corresponding to the flag $F$. For a subset $X$ of flags we define its \emph{characteristic vector} $\charvec_X$ by $\charvec_X(F)=1$, if $F\in X$, and $\charvec_X(F)=0$, if $F$ is a flag not in $X$. We shall denote the all ones vector of $\mathbb{R}^{|\cF_J|}$ by $\allones$, which is the characteristic vector of  $\cF_J$. The following well-known fact relates the antidesign to the ratio-sharp EKR-set and allows us to obtain local geometric information about the latter.

\begin{prop}\label{P: intersection_F_with_antidesing}
If $\EKR_J$ is a ratio-sharp independent set of $\Gamma_J$ and $v$ is an antidesign, then
\begin{align*}
\charvec_\EKR^\top v=\frac{(\allones^\top \allones_\EKR)(\allones^\top v)}{\allones^\top \allones}.
\end{align*}
\end{prop}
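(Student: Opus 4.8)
The plan is to exploit the fact, guaranteed by equality in the ratio bound (\Cref{thm:upperbound}), that the characteristic vector $\charvec_\EKR$ of a ratio-sharp EKR-set lies in the direct sum of the eigenspaces of $\Gamma_J$ for the largest and smallest eigenvalues. I would decompose $\charvec_\EKR$ accordingly and then pair it against the antidesign $v$, using that $v$ is by definition (\Cref{def:antidesign}) orthogonal to the smallest-eigenvalue eigenspace.

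First I would recall that the opposition graph $\Gamma_J$ is regular (indeed vertex-transitive), so the all-ones vector $\allones$ spans the eigenspace of the largest eigenvalue $d$, and moreover $\mathbb{R}^{|\cF_J|}$ decomposes orthogonally into this line together with the other eigenspaces. Write $\charvec_\EKR = \frac{\allones^\top \charvec_\EKR}{\allones^\top \allones}\,\allones + u$, where $u$ is the projection of $\charvec_\EKR$ onto the orthogonal complement of $\allones$. By \Cref{thm:upperbound}, $u$ lies entirely in the eigenspace $V_{\min}$ of the smallest eigenvalue. Next, because $v$ is an antidesign, $v \perp V_{\min}$, hence $v^\top u = 0$. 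Taking the inner product with $v$ therefore gives
\begin{align*}
\charvec_\EKR^\top v = \frac{\allones^\top \charvec_\EKR}{\allones^\top \allones}\,\allones^\top v,
\end{align*}
which is exactly the claimed identity after noting $\allones^\top \charvec_\EKR = \allones^\top \allones_\EKR$.

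There is essentially no serious obstacle here; the statement is a clean linear-algebra consequence of the two facts already in hand (the structure of $\charvec_\EKR$ from the ratio-bound equality case, and the definition of an antidesign). The only point that needs a word of care is the decomposition: one must use that $\Gamma_J$ is regular so that $\allones$ is an eigenvector (of the largest eigenvalue) and hence orthogonal to every other eigenspace, in particular to $V_{\min}$; this is where the vertex-transitivity of $\Gamma_J$, noted earlier in the excerpt, is invoked. I would also remark that $\allones^\top v$ need not vanish — $v$ is only required to be orthogonal to $V_{\min}$, not to $\allones$ — which is precisely why the right-hand side carries the factor $\allones^\top v$ rather than being zero.
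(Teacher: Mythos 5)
Your proof is correct and takes essentially the same route as the paper's: both decompose $\charvec_\EKR$ into its $\allones$-component plus a piece lying in the smallest eigenspace (using the ratio-bound equality case), and observe that pairing against $v$ annihilates the latter since $v$ is an antidesign. The only cosmetic difference is that the paper also writes $v = \ell\allones + w$ before invoking $e^\top v = 0$, whereas you skip that extra decomposition, which is a mild streamlining of the same argument.
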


The proof is well known. In the interest of keeping the paper self-contained, we give the short argument.

\begin{proof}
If $E$ is the eigenspace of the smallest eigenvalue of $\Gamma_{J}$, then $\charvec_\EKR\in\langle \allones \rangle+E$, that is $\charvec_\EKR=k\allones+e$ for some $k\in\mathbb{R}$ and $e\in E$. We write $v=\ell \allones+w$ with $\allones^\top w=0$. Since $e^\top v=0$, $\allones^\top \charvec_\EKR=k\allones^\top \allones$ and $\allones^\top v=\ell \allones^\top \allones$, then
\begin{align*}
\charvec_\EKR^\top v =(k\allones+e)^\top(\ell \allones+w)=k\ell \allones^\top \allones =\frac{(\allones^\top\charvec_\EKR)(\allones^\top v)}{\allones^\top \allones}.
\qedhere
\end{align*}
\end{proof}

\begin{remark}
If $\{g_1,\ldots,g_m\}$ is a set of spanning vectors of the smallest eigenspace it would suffice to show that $v^\top g_i=0$ for all $1\le 1\le m$, in order to show that $v$ is an antidesign, this approach is in line with \cite{klaus_jesse_philipp}. A set of spanning vectors of $\Gamma_{[n]}$ is described in \cite{AlgebraicApproach2}, however, such a description of the eigenspaces is only known for $e>1$ or $e=1$ and $n$ even, so this does not quite cover all cases we are interested in. The dependency on the eigenspace is clearly a downside of this approach. Furthermore, this approach requires lengthy combinatorial arguments. Finally, it is a priori not clear how one can actually find antidesigns using this method.
\end{remark}

The relation between eigenvectors of $\Gamma_J$ and $\Gamma_{[n]}$ discussed in \Cref{lem:eigenvectorsthroughM} allows us to lift antidesigns.

\begin{lemma}\label{lem:transferantidesign}
    Let $J \subseteq [n]$. Suppose that $v$ is an antidesign of $\Gamma_J$, then $Mv$ is an antidesign of $\Gamma_{[n]}$.
\end{lemma}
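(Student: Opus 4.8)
The plan is to use the structural relation between the combinatorial objects $\Gamma_J$ and $\Gamma_{[n]}$ encoded by the matrix $M$, exactly as set up in \Cref{eq:quotientmatrixrelation} and \Cref{lem:eigenvectorsthroughM}. The key point is that the equitable partition of the chambers according to the flag of type $J$ they contain gives $A_{[n]}M = MQ$ with $Q = q^\ell A_J$ for the appropriate $\ell$, and that $M$ is injective as a linear map $\mathbb{R}^{|\cF_J|}\to\mathbb{R}^{|\cF_{[n]}|}$. Consequently, $M$ maps the eigenspace of $Q$ (equivalently of $A_J$) for a given eigenvalue isomorphically into the eigenspace of $A_{[n]}$ for the same eigenvalue, and maps distinct eigenvalues to distinct eigenvalues. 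What I will need, beyond this, is the observation that the smallest eigenvalue of $\Gamma_{[n]}$ is in fact attained inside the image of $M$, and indeed is the image under $M$ of the smallest eigenspace of $\Gamma_J$ — this is precisely the content of the discussion preceding the lemma (the minimal eigenvalue of $\Gamma_{[n]}$ is attained on the relevant module coming from the partition by type-$J$ flags, cf. \cite{AlgebraicApproach,AlgebraicApproach2}), and it is the fact I would cite rather than reprove.

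First I would fix notation: let $\theta_{\min}$ denote the smallest eigenvalue of $\Gamma_J$, let $E_J$ be the corresponding eigenspace in $\mathbb{R}^{|\cF_J|}$, and let $E_{[n]}$ be the eigenspace of $\Gamma_{[n]}$ for \emph{its} smallest eigenvalue. By the known spectral description, the smallest eigenvalue of $\Gamma_{[n]}$ equals $q^\ell\theta_{\min}$ and $M$ restricts to an injection $E_J \hookrightarrow E_{[n]}$; moreover $E_{[n]}$ is spanned by $M(E_J)$ together with eigenvectors that vanish on the partition, but for the argument I only need $M(E_J) \subseteq E_{[n]}$ and the fact that $M$ sends the orthogonal complement of $E_J$ (within $\mathbb{R}^{|\cF_J|}$) into the orthogonal complement of $M(E_J)$.

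Now suppose $v$ is an antidesign of $\Gamma_J$, i.e. $v \perp E_J$. Decompose $v$ into eigencomponents of $A_J$; since $v$ has no component in $E_J$, every eigencomponent $v_i$ of $v$ lies in an eigenspace of $A_J$ for an eigenvalue $\theta_i \neq \theta_{\min}$. By \Cref{lem:eigenvectorsthroughM} (applied componentwise, or directly: $Mv_i$ is an eigenvector of $A_{[n]}$ for eigenvalue $q^\ell\theta_i \neq q^\ell\theta_{\min}$), each $Mv_i$ lies in an eigenspace of $A_{[n]}$ distinct from $E_{[n]}$, hence $Mv_i \perp E_{[n]}$. Summing, $Mv = \sum_i Mv_i \perp E_{[n]}$, which is exactly the statement that $Mv$ is an antidesign of $\Gamma_{[n]}$.

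The only genuine subtlety — and hence the main obstacle — is justifying that the smallest eigenvalue of $\Gamma_{[n]}$ really is the image $q^\ell\theta_{\min}$ of the smallest eigenvalue of $\Gamma_J$, so that ``no component in $E_J$'' translates into ``no component in $E_{[n]}$'' after applying $M$. This is not automatic from the equitable partition alone (a priori $\Gamma_{[n]}$ could have an even smaller eigenvalue on the part of $\mathbb{R}^{|\cF_{[n]}|}$ not seen by $M$), so I would invoke the explicit spectral analysis of these opposition graphs from \cite{AlgebraicApproach} (and the refinement in \cite{AlgebraicApproach2}), which identifies where the minimal eigenvalue of $\Gamma_{[n]}$ lives; with that in hand the rest is the short orthogonality argument above. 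If one wanted to avoid citing that, an alternative is to note that ratio-sharpness of EKR-sets of type $J$ (which do exist, e.g. the blow-up constructions) forces the relevant eigenvalues to line up, but the cleanest route is simply to use the established spectrum.
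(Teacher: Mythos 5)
Your argument is essentially the paper's: decompose the antidesign $v$ into eigencomponents of $A_J$, observe that none lies in the $\lambda_J$-eigenspace, apply $M$, and use \Cref{lem:eigenvectorsthroughM} to conclude that $Mv$ has no component in the corresponding eigenspace of $A_{[n]}$. The paper's own two-sentence proof is exactly this, stated tersely.

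You are right that there is a genuine subtlety being elided, namely that ``no component in the $q^\ell\lambda_J$-eigenspace of $A_{[n]}$'' is the same as ``orthogonal to the eigenspace of the \emph{smallest} eigenvalue of $\Gamma_{[n]}$''. One must know that $q^\ell\lambda_J$ really is $\lambda_{[n]}$; a priori the column space of $M$ might miss the bottom of the spectrum of $A_{[n]}$. Your instinct to invoke the explicit spectral description from \cite{AlgebraicApproach2} would work, but that description is only available for $e>1$ or $e=1$ and $n$ even, which is narrower than the hypotheses of the lemma. The ``alternative'' route you sketch at the end is in fact the cleaner one and uses only what the paper already records: \Cref{thm:upperbound} gives the ratio bound $|\cF_J|/(q^{n+e-1}+1)$ \emph{uniformly in $J$}, which is equivalent to $d_J = -\lambda_J q^{n+e-1}$ for every $J$ (the paper notes this explicitly just after \Cref{C: intersection basic}); Brouwer's result gives $d_{[n]} = q^\ell d_J$; hence $\lambda_{[n]} = -d_{[n]}/q^{n+e-1} = q^\ell\lambda_J$. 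With that one line supplied, your decomposition argument is complete and matches the intended proof. A small stylistic point: the claim that $M$ maps $E_J^\perp$ into $M(E_J)^\perp$ is true but not what you ultimately use; what the final paragraph correctly establishes is the stronger (and needed) statement that $M(E_J^\perp)\perp E_{[n]}$.
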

\begin{proof}
    By definition, the decomposition of $v$ as a linear combination of eigenvectors does not contain any eigenvector corresponding to the smallest eigenvalue of $\Gamma_J$. By \Cref{lem:eigenvectorsthroughM} this property remains true for $Mv$ and hence it is orthogonal to the eigenspace corresponding to the smallest eigenvalue of $\Gamma_{[n]}$.
\end{proof}

Now we will study how one can generate antidesigns and construct subspace-based antidesigns for $\Gamma_{[n]}$ and $\Gamma_s$ with $s\in[n]$. The former will be used to show that each maximum EKR-set of chambers is always a homomorphic preimage of a maximum EKR-set in $\Gamma_s$, whereas the latter will be used to prove that these only exist when $s \in \{1,n\}$. As usual, we assume that $e\ge 1$ or $n$ even.\\

As a first step, consider the adjacency matrix $A_J$ of the graph $\Gamma_J$ with $J\subseteq [n]$. Let $\lambda_J$ be the smallest eigenvalue of $\Gamma_J$. If $\chi$ is an eigenvector of $A_J$ corresponding to the smallest eigenvalue, we observe that
$$(A_J-\lambda_J I)\chi = 0.$$

In other words, every row of $A_J-\lambda_J I$ is orthogonal to the $\lambda_J$-eigenspace of $A_J$ and is hence an antidesign for $\Gamma_J$. To be precise, looking at the rows indexed by the flags of type $J$, we get an explicit description.

\begin{lemma}
Let $F$ be a fixed vertex of $\Gamma_J$. Then the following is an antidesign for $\Gamma_J$.
\begin{align*}
\chi_F(G):=\begin{cases}
-\lambda_J& \text{if $F=G$,}
\\
1 &  \text{if $F$ and $G$ are opposite,}
\\0 & \text{otherwise.}
\end{cases}
\end{align*}
\end{lemma}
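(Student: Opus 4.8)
The statement is essentially immediate from the identity $(A_J-\lambda_J I)\chi = 0$ observed just before the lemma, so the plan is simply to make this explicit by identifying the vector $\chi_F$ with a particular row of the matrix $A_J - \lambda_J I$. First I would recall that $A_J$ is the adjacency matrix of the opposition graph $\Gamma_J$, so its $(F,G)$ entry is $1$ when $F$ and $G$ are opposite flags of type $J$ and $0$ otherwise; the identity matrix $I$ contributes $-\lambda_J$ on the diagonal. Hence the row of $A_J - \lambda_J I$ indexed by the fixed flag $F$ is exactly the vector $\chi_F$ as defined in the statement: it has entry $-\lambda_J$ in position $F$, entry $1$ in each position $G$ opposite to $F$, and $0$ elsewhere.

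The second step is to conclude orthogonality to the smallest eigenspace. Let $E$ denote the eigenspace of $\Gamma_J$ for the smallest eigenvalue $\lambda_J$, and let $\chi \in E$ be arbitrary. Then $A_J \chi = \lambda_J \chi$, so $(A_J - \lambda_J I)\chi = 0$. Reading off the coordinate of this zero vector in position $F$ gives precisely $\chi_F^\top \chi = 0$. Since $\chi \in E$ was arbitrary, $\chi_F$ is orthogonal to $E$, which is the definition of an antidesign for $\Gamma_J$ (\Cref{def:antidesign}). This completes the argument.

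There is essentially no obstacle here; the only point deserving a word is that $A_J$ is symmetric (being the adjacency matrix of an undirected graph), so that "the row of $A_J - \lambda_J I$ indexed by $F$" and "the column indexed by $F$" coincide, and the expression $\chi_F^\top \chi$ genuinely computes the $F$-coordinate of $(A_J - \lambda_J I)\chi$. I would state this explicitly but not dwell on it. One could also phrase the whole proof in one line: $\chi_F$ is the $F$-th row of $A_J - \lambda_J I$, every row of which annihilates $E$, hence $\chi_F \perp E$.

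\begin{proof}
Let $E$ be the eigenspace of $A_J$ corresponding to the smallest eigenvalue $\lambda_J$. By definition of the opposition graph $\Gamma_J$, the $(F,G)$-entry of $A_J$ equals $1$ if $F$ and $G$ are opposite and $0$ otherwise. Hence, for the fixed vertex $F$, the vector $\chi_F$ described in the statement is exactly the row of the matrix $A_J-\lambda_J I$ indexed by $F$. For any $\chi\in E$ we have $A_J\chi=\lambda_J\chi$, so $(A_J-\lambda_J I)\chi=0$; the $F$-th coordinate of this identity reads $\chi_F^\top\chi=0$ (using that $A_J$ is symmetric, so that this coordinate is indeed $\chi_F^\top\chi$). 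As $\chi\in E$ was arbitrary, $\chi_F$ is orthogonal to $E$ and is therefore an antidesign for $\Gamma_J$.
\end{proof}
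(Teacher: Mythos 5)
Your proof is correct and takes exactly the route the paper does: the text immediately preceding the lemma observes $(A_J-\lambda_J I)\chi=0$ for any $\lambda_J$-eigenvector $\chi$, and the lemma is presented as the reading-off of the row of $A_J-\lambda_J I$ indexed by $F$. Your write-up just makes this explicit (including the symmetry point, which the paper leaves implicit), so there is nothing to add.
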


If $F$ is a flag that contains only one subspace $S$, we also write $\chi_S$. Let $d_J$ be the degree, i.e.\ the valency of $\Gamma_J$. It was already observed in \cite[Theorem 3.15]{AlgebraicApproach} that for the given values of $e$ and $n$, we have $d_J=-\lambda_Jq^{n+e-1}$.

\begin{cor} \label{C: intersection basic}
Let $\EKR_J$ be a ratio-sharp EKR-set for flags of type $J$ of $\PS(n,e,q)$. Furthermore, let $F$ be a fixed flag of type $J$. Then $\allones_{\EKR_J}^\top \chi_F=-\lambda_J$.
\end{cor}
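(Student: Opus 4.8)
The plan is to apply Proposition~\ref{P: intersection_F_with_antidesing} directly to the antidesign $\chi_F$. Since $\EKR_J$ is a ratio-sharp EKR-set and $\chi_F$ is an antidesign (by the preceding lemma), the proposition gives
\[
\allones_{\EKR_J}^\top \chi_F = \frac{(\allones^\top \allones_{\EKR_J})(\allones^\top \chi_F)}{\allones^\top \allones}.
\]
So the task reduces to computing the three scalar quantities on the right-hand side.

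First, $\allones^\top \allones = |\cF_J|$ is just the number of flags of type $J$. Second, $\allones^\top \allones_{\EKR_J} = |\EKR_J|$, and because $\EKR_J$ is ratio-sharp, Theorem~\ref{thm:upperbound} gives $|\EKR_J| = |\cF_J|/(q^{n+e-1}+1)$. Third, I would compute $\allones^\top \chi_F = \sum_{G} \chi_F(G)$: by the explicit description of $\chi_F$, this sum equals $-\lambda_J$ (from the entry $G=F$) plus the number of flags $G$ opposite to $F$ (each contributing $1$), i.e.\ $-\lambda_J + d_J$, where $d_J$ is the valency of $\Gamma_J$. Now using the identity $d_J = -\lambda_J q^{n+e-1}$ recorded just before the corollary, we get $\allones^\top \chi_F = -\lambda_J - \lambda_J q^{n+e-1} = -\lambda_J(q^{n+e-1}+1)$.

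Substituting everything back:
\[
\allones_{\EKR_J}^\top \chi_F = \frac{1}{|\cF_J|} \cdot \frac{|\cF_J|}{q^{n+e-1}+1} \cdot \bigl(-\lambda_J(q^{n+e-1}+1)\bigr) = -\lambda_J,
\]
which is the claimed value.

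I do not expect any real obstacle here: the statement is essentially a bookkeeping consequence of the general antidesign identity together with the ratio-sharpness of $\EKR_J$ and the known relation between valency, smallest eigenvalue, and the quantity $q^{n+e-1}$. The only point requiring a little care is making sure the row of $A_J - \lambda_J I$ indexed by $F$ really is the vector $\chi_F$ (which it is, by definition of the opposition graph and the fact that diagonal entries of $A_J$ are zero), and then invoking $d_J = -\lambda_J q^{n+e-1}$ correctly; both are supplied in the excerpt.
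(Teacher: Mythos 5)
Your proposal is correct and follows essentially the same approach as the paper: invoke Proposition~\ref{P: intersection_F_with_antidesing} with $v = \chi_F$, compute $\allones^\top \chi_F = -\lambda_J + d_J$ by counting, and use ratio-sharpness together with $d_J = -\lambda_J q^{n+e-1}$ to simplify.
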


\begin{proof}
    There is precisely one flag $G$ with $G=F$. Furthermore, there are exactly $d_J$ many flags $G$ with $G$ and $F$ being opposite. In view of \Cref{P: intersection_F_with_antidesing} this yields
\begin{align*}
\begin{split}
\allones_{\EKR_J}^\top \chi_F &=\frac{(\allones^\top \allones_{\EKR_J})(\allones^\top \chi_F)}{\allones^\top \allones}\\
&= \frac{\left(\frac{|\cF_J|}{q^{n+e-1}+1}\right) \left( -\lambda_J +d_J \right)}{|\cF_J|} \\
&= \frac{d_J-\lambda_J}{q^{n+e-1}+1} \\
&=-\lambda_J.
\qedhere
\end{split}
\end{align*}
\end{proof}

\begin{remark}
The eigenspace of the smallest eigenvalue $\lambda_J$ is the null space of $A_J-\lambda_JI$ and the rowspace is its orthogonal complement. Therefore, every antidesign of $\Gamma_J$ is a linear combination of the antidesigns $\chi_F$.
\end{remark}

Next, we examine a subspace-based antidesign for chambers. Recall that we denote $\lambda_J$ as $\lambda_s$ if $J=\{s\}$ and 
$$ -\lambda_s:= q^{s(2n+e-1/2-3s/2)-n-e+1}$$
as well as 
$$-\lambda_{[n]}=q^{(n-1)(n+e-1)}.$$

\begin{remark} \label{R: degree eqn}
In view of \Cref{L:degree_stuff}, we have
    $\deg(-\lambda_s)+\deg(z_sz_{n-s}\Phi_s^n)=\deg(-\lambda_{[n]}).$
\end{remark}

\begin{lemma} \label{N: SubspaceAntidesignChambers}
Let $s \in [n]$ and $S \in \cF_s$, then the function $v_S:\cF_{[n]}\to \mathbb{R}$ defined by
\begin{align*}
v_S(C)=\begin{cases}
-\lambda_s & \text{if $S=C_s$,}
\\
1 &  \text{if $S\cap C_{s}^\perp=\{0\}$,}
\\0 & \text{otherwise,}
\end{cases}
\end{align*}
is an antidesign for $\Gamma_{[n]}$. 
\end{lemma}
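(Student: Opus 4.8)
The goal is to show that $v_S$ is orthogonal to the $\lambda_{[n]}$-eigenspace of $\Gamma_{[n]}$. The natural strategy is to realize $v_S$ (up to scalars and lower-order corrections) as the image under $M$ of the antidesign $\chi_S$ of $\Gamma_s$ from the lemma just above, and then invoke \Cref{lem:transferantidesign}. Concretely, recall that $M$ is the incidence matrix between chambers and flags of type $\{s\}$: the $(C,T)$-entry of $M$ is $1$ if $C_s = T$ and $0$ otherwise. So for a vector $w\in\mathbb{R}^{|\cF_s|}$, the chamber-entry $(Mw)(C)$ is simply $w(C_s)$. Thus $M\chi_S$ is the function sending a chamber $C$ to $-\lambda_s$ if $C_s=S$, to $1$ if $C_s$ is opposite to $S$ (as $s$-spaces), and to $0$ otherwise.

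The obstacle is that opposition of $s$-spaces $S$ and $C_s$ is \emph{not} the same as the condition $S\cap C_s^\perp=\{0\}$ used in the definition of $v_S$. Two $s$-spaces $T,T'$ are opposite precisely when $T^\perp\cap T'=\{0\}$, which forces $\dim T = \dim T'$; but $S\cap C_s^\perp=\{0\}$ is the weaker (equivalent, actually, since both have dimension $s$) condition $\dim(S\cap C_s^\perp)=0$. Since $\perp$ is symmetric, $S^\perp\cap C_s=\{0\} \iff S\cap C_s^\perp=\{0\}$ when $\dim S=\dim C_s=s$ (one checks $\dim(S^\perp\cap C_s) = \dim(S\cap C_s^\perp)$ using $\dim S^\perp = d-s$ and the standard rank formula for the form), so in fact the two conditions coincide and $v_S = M\chi_S$ exactly. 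I would verify this identity $v_S = M\chi_S$ carefully: the three cases of $v_S$ match the three cases of $M\chi_S$ once one knows $C_s=S \iff (Mw)$ hits the first case, opposition $\iff$ the $\{0\}$-intersection condition, and everything else is the zero case.

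Given $v_S = M\chi_S$, we are done immediately: $\chi_S$ is an antidesign of $\Gamma_s$ by the lemma above (it is a row of $A_s-\lambda_s I$, hence orthogonal to the smallest eigenspace of $\Gamma_s$), so by \Cref{lem:transferantidesign} its image $M\chi_S = v_S$ is an antidesign of $\Gamma_{[n]}$, i.e.\ orthogonal to the $\lambda_{[n]}$-eigenspace.

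As an alternative, more self-contained route that avoids the homomorphism machinery, one could instead argue directly: let $A_{[n]}$ be the adjacency matrix of $\Gamma_{[n]}$ and consider $(A_{[n]} - \lambda_{[n]} I)$ applied to an appropriate chamber-indexed vector. Using the equitable partition relation $A_{[n]} M = M Q$ with $Q = q^{\deg(z_sz_{n-s}\Phi_s^n)} A_s$ (from \Cref{L: basicnumbers} and Brouwer's result), one gets $A_{[n]} M \chi_S = q^{\deg(z_sz_{n-s}\Phi_s^n)} M (A_s\chi_S) = q^{\deg(z_sz_{n-s}\Phi_s^n)} M(\lambda_s \chi_S + \text{(antidesign part)})$; combining with \Cref{R: degree eqn} which gives $q^{\deg(z_sz_{n-s}\Phi_s^n)}\lambda_s = \lambda_{[n]}$, one sees $M\chi_S$ is a sum of an eigenvector and an antidesign-type vector, whence orthogonal to the smallest eigenspace. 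I expect the cleanest write-up is the first route, with the only real content being the dimension identity $\dim(S^\perp\cap C_s) = \dim(S\cap C_s^\perp)$ that makes "opposite as $s$-spaces" agree with "$S\cap C_s^\perp = \{0\}$".
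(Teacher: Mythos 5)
Your proof is correct, but it takes a different route from the one the paper actually writes out for this lemma; in fact, the paper itself points out your route in the single sentence following its proof, where it remarks that $v_S$ being an antidesign ``also follows directly from \Cref{lem:transferantidesign} applied to $\chi_S$.'' The paper's displayed proof works entirely inside $\Gamma_{[n]}$: it shows that $\sum_{C:C_s=S}\chi_C = q^{\deg(z_sz_{n-s}\Phi_s^n)}\, v_S$, where each $\chi_C$ is a basic antidesign of $\Gamma_{[n]}$, by a case analysis on a test chamber $B$ (with \Cref{L: basicnumbers} supplying the count $q^{\deg(z_sz_{n-s}\Phi_s^n)}$ when $B_s$ is opposite to $S$, and \Cref{R: degree eqn} matching the scalar to $-\lambda_s$). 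Your proof instead works from $\Gamma_s$: you identify $v_S=M\chi_S$ with $\chi_S$ a basic antidesign of $\Gamma_s$ and then lift through \Cref{lem:transferantidesign}. Both routes ultimately lean on Brouwer's quotient-matrix result, but they package it differently; yours is shorter and more conceptual, while the paper's explicit positive combination $\sum_{C_s=S}\chi_C$ is reused verbatim in the proof of \Cref{C: subspace_intersection} (which you would have to rederive by other means). One small point to tighten in your write-up: your justification of $S\cap C_s^\perp=\{0\}\iff S^\perp\cap C_s=\{0\}$ via ``the standard rank formula'' should be made precise; the clean argument is that $(S^\perp+C_s)^\perp = S\cap C_s^\perp$, so by non-degeneracy $S\cap C_s^\perp=\{0\}$ iff $S^\perp+C_s$ is the ambient space, which by $\dim S=\dim C_s=s$ and a dimension count holds iff $S^\perp\cap C_s=\{0\}$. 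Also, your parenthetical ``which forces $\dim T=\dim T'$'' is not right (the condition $T^\perp\cap T'=\{0\}$ imposes no such constraint); the equality of dimensions is built into the set-up because both are $s$-spaces.
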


\begin{proof}
    Let $S$ be a fixed $s$-space. Recall that the $s$-space of a chamber $C$ is denoted as $C_s$. We will show that $\sum\limits_{C_s=S}\chi_C$ is a scalar multiple of $v_S$. 
    
    For any chamber $B$ with $B_s=S$ we get a contribution of $-\lambda_{[n]}$ from $\chi_B$. In addition, $B$ is not opposite to any chamber $C$ with $C_s=S$, so this is the only non-zero contribution. Therefore, we get $\sum\limits_{C_s=S}\chi_C(B)=-\lambda_{[n]}=q^{(n-1)(n+e-1)}$ if $B_s=S$.

    Now, let $B$ be a chamber such that $B_s$ is not equal to, but also not opposite to $S$. In this case $B$ will be non-opposite to any chamber $C$ with $C_s=S$. Hence, we get $\sum\limits_{C_s=S}\chi_C(B)=0$.

    Finally, let $B$ be a chamber with $B_s$ opposite to $S$. According to \Cref{L: basicnumbers}, the number of chambers $C$ with $C_s=S$ opposite to $B$ is precisely $q^{\deg(z_sz_{n-s}\Phi_s^n)}$.

    The assertion follows from \Cref{R: degree eqn}.
\end{proof}

It is not a coincidence that we see $-\lambda_s$ in the antidesign for $\Gamma_{[n]}$. The fact that $v_S$ is an antidesign also follows directly from \Cref{lem:transferantidesign} applied to $\chi_S$.

\begin{cor} \label{C: subspace_intersection}
Let $\EKR$ be a ratio-sharp EKR-set of chambers of $\PS(n,e,q)$.
We have 
$$v_S^\top \allones_\EKR 
=-\lambda_s z_sz_{n-s} \Phi_s^n .$$
\end{cor}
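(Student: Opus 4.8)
The plan is to apply \Cref{P: intersection_F_with_antidesing} to the antidesign $v_S$ constructed in \Cref{N: SubspaceAntidesignChambers}. Since $v_S$ is an antidesign for $\Gamma_{[n]}$ and $\EKR$ is a ratio-sharp EKR-set of chambers, that proposition gives
\begin{align*}
v_S^\top \allones_\EKR = \frac{(\allones^\top \allones_\EKR)(\allones^\top v_S)}{\allones^\top \allones}.
\end{align*}
So everything reduces to computing the three scalars on the right-hand side. We have $\allones^\top \allones = |\cF_{[n]}|$, and by \Cref{C: Phi}(b) together with ratio-sharpness, $\allones^\top \allones_\EKR = |\EKR| = \Phi_1^n z_n = |\cF_{[n]}|/(q^{n+e-1}+1)$.

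The main computation is therefore $\allones^\top v_S$, i.e.\ summing $v_S(C)$ over all chambers $C$. First I would count the chambers $C$ with $C_s = S$: there are exactly $z_s z_{n-s}\Phi_s^n$ of these (this is the number of chambers through a fixed $s$-space, as noted right after \Cref{L: basicnumbers}), each contributing $-\lambda_s$. Next I would count the chambers $C$ with $S \cap C_s^\perp = \{0\}$, i.e.\ those whose $s$-space is opposite to $S$; by \Cref{L: heavy power m-space} with $m=0$ (or directly: the number of $s$-spaces opposite to $S$ is $q^{\deg(\Phi_0^s)} = \Phi_0^s$ up to lower-order terms — but we need the exact count), one computes the exact number of $s$-spaces opposite to $S$ and then multiplies by the number of chambers through each, which is again $z_s z_{n-s}\Phi_s^n$. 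Combining, $\allones^\top v_S = z_s z_{n-s}\Phi_s^n\big(-\lambda_s + (\text{number of }s\text{-spaces opposite to }S)\big)$.

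The key identity that makes the final answer clean is the analogue of \Cref{C: intersection basic} at the level of $s$-spaces: by \Cref{C: intersection basic} applied with $J = \{s\}$, we have $\allones_{\EKR_s}^\top \chi_S = -\lambda_s$ for a ratio-sharp EKR-set of $s$-spaces, and unwinding \Cref{P: intersection_F_with_antidesing} in that setting shows $-\lambda_s + d_s = (q^{n+e-1}+1)(-\lambda_s)$, where $d_s$ is the number of $s$-spaces opposite to $S$. Substituting this into the expression for $\allones^\top v_S$ gives $\allones^\top v_S = z_s z_{n-s}\Phi_s^n \cdot (q^{n+e-1}+1)(-\lambda_s) \cdot \frac{1}{q^{n+e-1}+1}$ after accounting for the single term $C_s = S$; more carefully, $-\lambda_s + d_s = (q^{n+e-1}+1)(-\lambda_s)$ yields $\allones^\top v_S = z_s z_{n-s}\Phi_s^n (q^{n+e-1}+1)(-\lambda_s)$. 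Plugging the three scalars into \Cref{P: intersection_F_with_antidesing} then gives
\begin{align*}
v_S^\top \allones_\EKR = \frac{\frac{|\cF_{[n]}|}{q^{n+e-1}+1} \cdot z_s z_{n-s}\Phi_s^n (q^{n+e-1}+1)(-\lambda_s)}{|\cF_{[n]}|} = -\lambda_s z_s z_{n-s}\Phi_s^n,
\end{align*}
as claimed. The only mild obstacle is bookkeeping the exact (not just leading-term) count of $s$-spaces opposite to a fixed $s$-space; but this is precisely $d_s$, the valency of $\Gamma_s$, and the relation $d_s = -\lambda_s q^{n+e-1}$ recorded just before \Cref{C: intersection basic} handles it immediately, so that in fact $-\lambda_s + d_s = -\lambda_s(1 + q^{n+e-1})$ drops out with no computation at all.
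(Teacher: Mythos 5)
Your proof is correct but takes a genuinely different route from the paper's. The paper's proof does not go back to \Cref{P: intersection_F_with_antidesing} at all: it reuses the decomposition $v_S = q^{-\deg(z_s z_{n-s}\Phi_s^n)} \sum_{C_s=S}\chi_C$ established in the proof of \Cref{N: SubspaceAntidesignChambers}, applies \Cref{C: intersection basic} termwise (each summand contributes $-\lambda_{[n]}$), counts the $z_s z_{n-s}\Phi_s^n$ summands via \Cref{L: extend flags}, and finishes with $-\lambda_{[n]} = q^{\deg(z_s z_{n-s}\Phi_s^n)}(-\lambda_s)$ from \Cref{R: degree eqn}. You instead apply \Cref{P: intersection_F_with_antidesing} directly to $v_S$, which requires computing $\allones^\top v_S$ from scratch; this works, and the computation $\allones^\top v_S = z_s z_{n-s}\Phi_s^n(-\lambda_s + d_s)$ combined with the recorded valency identity $d_s = -\lambda_s q^{n+e-1}$ does yield the clean cancellation. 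One remark on your middle paragraph: the identity $-\lambda_s + d_s = (q^{n+e-1}+1)(-\lambda_s)$ is a purely algebraic consequence of $d_s = -\lambda_s q^{n+e-1}$, which holds for all $J$ by \cite[Theorem 3.15]{AlgebraicApproach}; the detour through ``unwinding \Cref{P: intersection_F_with_antidesing} with $J=\{s\}$ and \Cref{C: intersection basic}'' is both unnecessary and slightly dubious, since \Cref{C: intersection basic} presupposes a ratio-sharp EKR-set of $s$-spaces, whose existence you have not established (and indeed the later sections of the paper show no such set exists for $1<s<n$). You recognize this in the closing sentence, so the argument stands, but it would be cleaner to cite the valency relation directly and drop the intermediate detour. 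Overall, the paper's route is slicker because it piggybacks on the already-proven \Cref{C: intersection basic} for chambers and the structure of $v_S$ as a sum of $\chi_C$'s; your route is more self-contained and makes the role of \Cref{P: intersection_F_with_antidesing} explicit.
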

\begin{proof}
    In the proof of \Cref{N: SubspaceAntidesignChambers} we have shown that 
    $$v_S=q^{-\deg(z_sz_{n-s}\Phi_s^n)}\cdot\sum\limits_{C_s=S}\chi_C.$$
    The number of chambers $C$ with $C_s=S$ is, according to \ref{L: extend flags}, precisely $z_sz_{n-s}\Phi_s^n$. This implies 
    \begin{align*}
    \begin{split}
        v_S^\top \allones_\EKR&= q^{-\deg(z_sz_{n-s}\Phi_s^n)} \cdot \sum\limits_{C_s=S}\chi_C^\top \allones_\EKR\\
        &= q^{-\deg(z_sz_{n-s}\Phi_s^n)} \cdot (-z_sz_{n-s}\Phi_s^n\lambda_{[n]})\\
        &= -z_sz_{n-s}\Phi_s^n\lambda_{s}.
        \qedhere
    \end{split}
    \end{align*}
\end{proof}

By summing over antidesigns $\chi_S$, we also obtain a subspace-based antidesign for $\Gamma_s$.

\begin{lemma} \label{L:m-spaces_m<s}
  Let $1\leq m<s$ be an integer and let $M$ be an $m$-subspace. Then the function $v_M:\cF_s\to \mathbb{R}$ defined by
\begin{align*}
v_M(T):=\sum\limits_{M \subseteq S}\chi_S(T) =\begin{cases}
-\lambda_s & \text{if $M \subseteq T$,}
\\
q^{\deg(\Phi_m^s)} &  \text{if $M \cap T^\perp =\{0\}$ } \\
0 & \text{otherwise,}
\end{cases}
\end{align*}
is an antidesign for $\Gamma_s$.
\end{lemma}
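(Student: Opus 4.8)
The plan is to verify the claimed case distinction directly by computing, for a fixed $m$-space $M$ and a fixed $s$-space $T$, the quantity
\[
v_M(T)=\sum_{M\subseteq S,\ S\in\cF_s}\chi_S(T),
\]
and then to invoke \Cref{lem:transferantidesign} (or rather its analogue for $\Gamma_s$) to conclude that $v_M$ is an antidesign. Since each $\chi_S$ is an antidesign for $\Gamma_s$ by the lemma preceding \Cref{C: intersection basic}, and antidesigns form a linear subspace (they are exactly the orthogonal complement of the smallest eigenspace of $\Gamma_s$), any linear combination of the $\chi_S$ is again an antidesign. So the only real content is the evaluation of the sum, i.e.\ that the function defined as a sum equals the piecewise function on the right.

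First I would fix $T\in\cF_s$ and split the sum according to the relation between $T$ and each $s$-space $S$ through $M$. Recall $\chi_S(T)=-\lambda_s$ if $S=T$, $\chi_S(T)=1$ if $S$ and $T$ are opposite, and $\chi_S(T)=0$ otherwise. There are three cases for $T$ relative to $M$:
\begin{itemize}
\item[(i)] If $M\subseteq T$: then exactly one $s$-space $S$ through $M$ equals $T$, contributing $-\lambda_s$; and no $s$-space $S$ through $M$ can be opposite to $T$, since $M\subseteq S\cap T$ would have to be trivial for oppositeness (indeed opposite $s$-spaces meet only in $\{0\}$, in fact $S^\perp\cap T=\{0\}$ forces $S\cap T=\{0\}$). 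Hence $v_M(T)=-\lambda_s$.
\item[(ii)] If $M\cap T^\perp=\{0\}$: then $M\not\subseteq T$ (otherwise $M\subseteq T\subseteq$ and $M\subseteq T\cap T^\perp$ only if $M$ is totally contained, but $T^\perp\cap T\neq\{0\}$ in general — more simply $M\subseteq T$ would give $M\subseteq T^\perp\cap M$ is false; one checks $M\cap T^\perp=\{0\}$ together with $M\neq\{0\}$ rules out $M\subseteq T$ when $s\le n$ since... ) — in any case $S=T$ does not occur among the $S\supseteq M$, so the $-\lambda_s$ term is absent, and $v_M(T)$ counts the number of $s$-spaces $S$ through $M$ opposite to $T$. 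By \Cref{L: heavy power m-space} (with the roles of $S$ and $T$ swapped, using that $M\cap T^\perp=\{0\}$), this number is exactly $q^{\deg(\Phi_m^s)}$. Thus $v_M(T)=q^{\deg(\Phi_m^s)}$.
\item[(iii)] Otherwise, i.e.\ $M\not\subseteq T$ and $M\cap T^\perp\neq\{0\}$: then again $S=T$ does not occur, and no $S\supseteq M$ can be opposite to $T$, because $S$ opposite to $T$ forces $S^\perp\cap T=\{0\}$; but $M\cap T^\perp\neq\{0\}$ gives a nonzero vector $v\in M\cap T^\perp\subseteq S\cap T^\perp$, hence $S^\perp\supseteq \langle v\rangle^\perp$... more directly, $v\in T^\perp$ and $v\in S$ means $v\in S\cap T^\perp$, and opposition of $S$ and $T$ requires $T^\perp\cap S=\{0\}$, a contradiction. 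So $v_M(T)=0$.
\end{itemize}

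Assembling (i)--(iii) gives exactly the piecewise formula in the statement, and the antidesign property follows from linearity as noted above (alternatively, one can quote \Cref{lem:transferantidesign} in the guise appropriate to $\Gamma_s$). The only mildly delicate bookkeeping points are (a) making the "opposite $s$-spaces meet trivially, in fact $S^\perp\cap T=\{0\}$" implications clean, and (b) double-checking that the cleanly stated cases (i), (ii), (iii) are genuinely exhaustive and mutually exclusive for an $m$-space $M$ and an $s$-space $T$ — i.e.\ that $M\subseteq T$ and $M\cap T^\perp=\{0\}$ cannot happen simultaneously (which holds because $M\subseteq T$ gives $M=M\cap T\supseteq$ and then $M\cap T^\perp = M\cap T\cap T^\perp$, and $T\cap T^\perp$ contains $T$ intersected with... actually $M\subseteq T$ implies $M\cap T^\perp\supseteq$ nothing automatically — one needs that $M\neq\{0\}$ and the restriction of the form to $T$ has radical $T\cap T^\perp$, so an $m$-space inside $T$ that meets $T^\perp$ trivially would be a nondegenerate subspace, which is impossible inside a totally isotropic $T$ unless $m=0$). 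I expect this last exhaustiveness check, rather than the counting, to be the main thing to get right; the count itself is an immediate application of \Cref{L: heavy power m-space}.
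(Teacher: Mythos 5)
Your proof is correct and takes essentially the same approach as the paper: the key step in both is the case analysis on the mutual position of $M$ and $T$, with \Cref{L: heavy power m-space} supplying the count $q^{\deg(\Phi_m^s)}$ in the opposite case, and the antidesign property following from linearity since each $\chi_S$ is an antidesign. One small remark: your parenthetical suggestion to invoke \Cref{lem:transferantidesign} is not quite the right reference (that lemma lifts antidesigns from $\Gamma_J$ to $\Gamma_{[n]}$ via the incidence matrix $M$, which is a different mechanism); the correct justification is exactly the one you lead with, namely that the set of antidesigns is the orthogonal complement of an eigenspace and hence closed under linear combinations.
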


\begin{proof}
    The value of $v_M(T)$ depends on the mutual position of $T$ and $M$. If $M\subseteq T$, then we only get a contribution of $-\lambda_s$ from $\chi_T$ since $T$ is not opposite to any other $s$-space on $M$. If $T$ does not contain $M$, but $T^\perp$ meets $M$ in at least a point then $T$ is non-opposite to any subspace on $M$ so the value in question is $0$ in this case. Now, if $T^\perp\cap M=\{0\}$ the number of $s$-spaces incident with $M$ and opposite to $T$ is, according to \Cref{L: heavy power m-space}, exactly $q^{\deg(\Phi_m^s)}$. 
\end{proof}

\begin{cor} \label{C: Intersection m-spaces m<s}
Let $\EKR_s$ be a ratio-sharp EKR-set of $s$-spaces of $\PS(n,e,q)$ and let  $1\leq m<s$ be an integer such that $M$ is an $m$-subspace. We have 
$$v_M^\top \allones_{\EKR_s} = -\lambda_s \Phi_m^s .$$
\end{cor}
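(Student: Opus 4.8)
The plan is to combine the antidesign relation from \Cref{P: intersection_F_with_antidesing} with the explicit description of $v_M$ obtained in \Cref{L:m-spaces_m<s}. Since $v_M = \sum_{M\subseteq S}\chi_S$ as a vector over $\cF_s$, and each $\chi_S$ is an antidesign for $\Gamma_s$ by the earlier lemma, the vector $v_M$ is itself an antidesign. Therefore \Cref{P: intersection_F_with_antidesing} applies and gives
\begin{align*}
  v_M^\top \allones_{\EKR_s} = \frac{(\allones^\top\allones_{\EKR_s})(\allones^\top v_M)}{\allones^\top\allones}.
\end{align*}
It thus remains to compute the three scalar quantities on the right. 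Here $\allones^\top\allones = |\cF_s| = \Phi_0^s$ by \Cref{C: Phi}(a), and $\allones^\top\allones_{\EKR_s} = |\EKR_s| = \Phi_0^s/(q^{n+e-1}+1)$ since $\EKR_s$ is ratio-sharp (\Cref{thm:upperbound}).

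The main computational step is evaluating $\allones^\top v_M = \sum_{T\in\cF_s} v_M(T)$. From \Cref{L:m-spaces_m<s} this sum splits into two pieces: the $s$-spaces $T$ containing $M$, each contributing $-\lambda_s$, and the $s$-spaces $T$ with $M\cap T^\perp=\{0\}$, each contributing $q^{\deg(\Phi_m^s)}$. The number of $s$-spaces through $M$ is $\Phi_m^s$ by \Cref{L: extend subspaces}, so the first piece equals $-\lambda_s\Phi_m^s$. For the second piece one needs the number of $s$-spaces $T$ with $M\cap T^\perp=\{0\}$; passing to the quotient polar space $M^\perp/M\cong\PS(n-m,e,q)$ as in the proof of \Cref{L: heavy power m-space} (or counting directly with \Cref{L:advanced_comb}), the condition $M\cap T^\perp=\{0\}$ is equivalent to $T$ being "in general position" with respect to $M$, and this count is a $q$-polynomial whose value, multiplied by $q^{\deg(\Phi_m^s)}$, one expects to cancel against a correction so that the total $\allones^\top v_M$ simplifies neatly.

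Rather than grinding through that count, the cleaner route is to use the global identity already baked into the construction. Since $v_M = \sum_{M\subseteq S}\chi_S$, we have $\allones^\top v_M = \sum_{M\subseteq S}\allones^\top\chi_S$, and by \Cref{C: intersection basic} applied with $\EKR_s$ replaced by the whole vertex set one computes $\allones^\top \chi_S = -\lambda_s + d_s$ for each $s$-space $S$ (there is one $T$ equal to $S$ and $d_s$ opposite to it). There are $\Phi_m^s$ subspaces $S$ with $M\subseteq S$, so $\allones^\top v_M = \Phi_m^s(d_s-\lambda_s)$. Plugging everything in,
\begin{align*}
  v_M^\top\allones_{\EKR_s}
  = \frac{\left(\dfrac{\Phi_0^s}{q^{n+e-1}+1}\right)\Phi_m^s(d_s-\lambda_s)}{\Phi_0^s}
  = \frac{\Phi_m^s(d_s-\lambda_s)}{q^{n+e-1}+1}.
\end{align*}
Finally, using $d_s = -\lambda_s q^{n+e-1}$ (the relation $d_J = -\lambda_J q^{n+e-1}$ recorded before \Cref{C: intersection basic}), the numerator becomes $\Phi_m^s\cdot(-\lambda_s)(q^{n+e-1}+1)$, and the $(q^{n+e-1}+1)$ factors cancel, leaving $v_M^\top\allones_{\EKR_s} = -\lambda_s\Phi_m^s$, as claimed.

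\textbf{Main obstacle.} The only real subtlety is making sure the reduction $\allones^\top v_M = \Phi_m^s(d_s-\lambda_s)$ is fully justified — i.e.\ that summing the row vectors $\chi_S$ over all $S\supseteq M$ genuinely produces the vector $v_M$ of \Cref{L:m-spaces_m<s} with no sign or multiplicity errors in the three mutual-position cases — since this is exactly the content of \Cref{L:m-spaces_m<s} and already proved, so the argument goes through cleanly. The alternative direct count of $s$-spaces in general position with $M$ via \Cref{L:advanced_comb} would work too but involves more bookkeeping; I would avoid it.
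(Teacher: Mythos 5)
Your proof is correct and rests on the same core facts as the paper's (the decomposition $v_M = \sum_{M\subseteq S}\chi_S$, linearity, and \Cref{P: intersection_F_with_antidesing}), but it takes a small detour. The paper simply applies \Cref{C: intersection basic} to each $\chi_S$, which already gives $\chi_S^\top\allones_{\EKR_s} = -\lambda_s$, and then sums over the $\Phi_m^s$ choices of $S$ containing $M$; you instead re-invoke \Cref{P: intersection_F_with_antidesing} on the composite antidesign $v_M$, which forces you to evaluate $\allones^\top v_M$, $\allones^\top\allones_{\EKR_s}$ and $\allones^\top\allones$ explicitly and cancel $q^{n+e-1}+1$ at the end --- in effect unwinding \Cref{C: intersection basic} rather than using it. Both are valid; the paper's is shorter. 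One minor slip worth flagging: citing \Cref{C: intersection basic} ``with $\EKR_s$ replaced by the whole vertex set'' to obtain $\allones^\top\chi_S = d_s - \lambda_s$ is not licensed, since the whole vertex set is not a ratio-sharp EKR-set and the corollary's hypothesis fails; your parenthetical (``one $T$ equal to $S$ and $d_s$ opposite to it'') is the correct direct justification, so the computation is fine, but the reference should be dropped.
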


\begin{proof}
    According to \Cref{C: intersection basic}, we have $\chi_S^\top \allones_{\EKR_s}=-\lambda_s$. For the antidesign $v_M$ we sum over all the $s$-spaces incident with $M$ and according to \Cref{L: extend subspaces} there are precisely $\Phi_m^s$ such $s$-spaces. Since $v_M=\sum\limits_{M \subseteq S}\chi_S$, we have  $v_M^\top \allones_{\EKR_s}=\sum\limits_{M \subseteq S} \chi_S^\top \allones_{\EKR_s} $. The result follows. 
\end{proof}

\begin{remark}
Ovoids, spreads, and more generally $m$-systems \cite{m-systems} are other known antidesigns \cite[Theorem 4.4.14]{Vanhove_PhD}, albeit not usually described with this term.
Let $\mathcal{M}$ be an $m$-system.
In our notation the following map $v_\mathcal{M}$ from the set of chambers of $\PS(n,e,q)$ to $\mathbb{R}$ is an antidesign:
\begin{align*}
v_\mathcal{M}(C)=\left\{\begin{array}{ll}
1 & \text{if $C_m\in \mathcal{M}$,}
\\
0 &  \text{if $C_m\notin \mathcal{M}$.}
\end{array}
\right.
\end{align*}
It is straightforward to prove that this is a linear combination of all $\chi_C$ with $C_m\in \mathcal{M}$.
\end{remark}

 \section{Maximum EKR-sets of chambers} \label{Section: Classification of ekr}

In this section, we always consider polar spaces $\PS(n,e,q)$ with $e\ge 1$ or $n$ even. Our goal is to show that a maximum EKR-set of chambers is a homomorphic preimage of a maximum EKR-set of $s$-spaces. In \Cref{S:EKR_s-spaces} we rule out all $s$ with $s\notin \{1,n\}$. The approach we use is somewhat similar to \cite{klaus_jesse_philipp}.

\begin{notation} Let $\EKR$ be an EKR-set of chambers of $\PS(n,e,q)$.
\begin{enumerate}[1.]
\item 
For every subspace $S$ of $\PS(n,e,q)$ the weight of $S$ (with respect to $\EKR$) is the number of chambers of $\EKR$ that contain $S$. We call $S$ \emph{heavy} (with respect to $\EKR$) if all chambers that contain $S$ belong to $\EKR$; otherwise, we call $S$ \emph{light}.
\item A chamber is called \emph{light} if all its subspaces are light, and otherwise it is called \emph{heavy}.
\end{enumerate}
\end{notation}

The following lemma is prominently featured in almost all proofs of this section. Note that an integral part of the proof of this lemma is the subspace-based antidesign. As before, we use the notation $\lambda_s=-q^{s(2n+e-1/2-3s/2)-n-e+1}$. Recall also that the size of a maximal EKR-set of chambers in $\PS(n,e,q)$ is $\Phi_1^n z_n$ and $\deg(\Phi_1^nz_n)=(n-1)(n+e-1)$.

\begin{lemma} \label{L: key for the proof}
Let $s \in [n]$, $S$ be a subspace of dimension $s$, and $\EKR$ a ratio-sharp EKR-set of chambers of $\PS(n,e,q)$. Considering the chambers $C \in \EKR$, let $X$ be the number of these with $C_s=S$, let $Y$ the number of these with $C_s^\perp \cap S=\{0\}$, and let $Z$ the number of these with $C_s\neq S$ and $C_s^\perp \cap S\neq \{0\}$.
\begin{enumerate}[(a)]
\item We have $Y=\lambda_s (X-z_sz_{n-s} \Phi_s^n)$.
\item If $S$ is heavy, then $X=z_sz_{n-s}\Phi_s^n$ and $Z=\Phi_1^n z_n- z_sz_{n-s}\Phi_s^n$.
\item If $S$ is light, we have $X\le z_sz_{n-s}\Phi_s^n-q^{\deg(z_sz_{n-s}\Phi_s^n)}$ and furthermore we have $$Z\le \Phi_1^nz_n -z_sz_{n-s}\Phi_s^n+ (\lambda_s+1) q^{\deg(z_sz_{n-s} \Phi_s^n)}. $$
\end{enumerate}
\end{lemma}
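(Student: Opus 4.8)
The plan is to extract all the arithmetic from the two subspace-based antidesigns already in hand: the chamber antidesign $v_S$ from \Cref{N: SubspaceAntidesignChambers} together with its evaluation in \Cref{C: subspace_intersection}, and the basic counts from \Cref{L: extend flags}. First I would record the partition of $\EKR$ induced by a fixed $s$-space $S$: every chamber $C\in\EKR$ falls into exactly one of three classes according to whether $C_s=S$ (contributing to $X$), $C_s\ne S$ but $C_s^\perp\cap S=\{0\}$ (contributing to $Y$), or $C_s\ne S$ and $C_s^\perp\cap S\ne\{0\}$ (contributing to $Z$), so that $X+Y+Z=|\EKR|=\Phi_1^n z_n$. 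Next, by the definition of $v_S$, we have $v_S^\top\allones_\EKR=-\lambda_s X + Y$ (the class counted by $Z$ gets weight $0$). Comparing with \Cref{C: subspace_intersection}, which gives $v_S^\top\allones_\EKR=-\lambda_s z_sz_{n-s}\Phi_s^n$, yields $-\lambda_s X + Y = -\lambda_s z_sz_{n-s}\Phi_s^n$, i.e.\ $Y=\lambda_s(X - z_sz_{n-s}\Phi_s^n)$, which is part (a). Note $\lambda_s<0$, so $Y\ge 0$ forces $X\le z_sz_{n-s}\Phi_s^n$ always.

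**Part (b).** If $S$ is heavy, then by definition every chamber containing $S$ lies in $\EKR$, and by \Cref{L: extend flags} the number of such chambers is exactly $z_sz_{n-s}\Phi_s^n$; hence $X=z_sz_{n-s}\Phi_s^n$. Plugging into (a) gives $Y=0$, so $Z = \Phi_1^n z_n - X - Y = \Phi_1^n z_n - z_sz_{n-s}\Phi_s^n$, as claimed.

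**Part (c).** If $S$ is light, then by definition at least one chamber $C$ with $C_s=S$ is not in $\EKR$. The key point is to produce many such chambers: since $C\notin\EKR$ and $\EKR$ is an EKR-set, every chamber opposite to $C$ must lie outside $\EKR$ as well. I would now count chambers $B$ with $B_s=S$ that are opposite to $C$; by \Cref{L: basicnumbers} (applied with the subspace $S$, noting $S\cap C_s^\perp = S\cap S^\perp=\{0\}$) there are exactly $q^{\deg(z_sz_{n-s}\Phi_s^n)}$ of them, and all such $B$ (including none of them being $C$ itself) are excluded from $\EKR$. Therefore $X\le z_sz_{n-s}\Phi_s^n - q^{\deg(z_sz_{n-s}\Phi_s^n)}$, the first inequality. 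For the bound on $Z$: from (a), $Z=\Phi_1^n z_n - X - Y = \Phi_1^n z_n - X - \lambda_s(X-z_sz_{n-s}\Phi_s^n) = \Phi_1^n z_n - (1+\lambda_s)X + \lambda_s z_sz_{n-s}\Phi_s^n$. Since $\lambda_s+1$ may be negative I must be careful with the direction of the inequality; writing $Z = \Phi_1^n z_n + \lambda_s z_sz_{n-s}\Phi_s^n - (1+\lambda_s)X$ and using $X\le z_sz_{n-s}\Phi_s^n - q^{\deg(z_sz_{n-s}\Phi_s^n)}$, together with the fact that $1+\lambda_s \le 0$ for $s\in[n]$ in the relevant range (so $-(1+\lambda_s)\ge 0$ and the upper bound on $X$ gives an upper bound on $-(1+\lambda_s)X$), I get
\begin{align*}
Z &\le \Phi_1^n z_n + \lambda_s z_sz_{n-s}\Phi_s^n - (1+\lambda_s)\bigl(z_sz_{n-s}\Phi_s^n - q^{\deg(z_sz_{n-s}\Phi_s^n)}\bigr)\\
&= \Phi_1^n z_n - z_sz_{n-s}\Phi_s^n + (\lambda_s+1)q^{\deg(z_sz_{n-s}\Phi_s^n)},
\end{align*}
which is exactly the claimed bound.

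**Main obstacle.** The only delicate point is the sign of $1+\lambda_s$: one must verify that $\lambda_s \le -1$ for all $s\in[n]$ under the standing hypotheses ($e\ge 1$ or $n$ even), equivalently that the exponent $s(2n+e-1/2-3s/2)-n-e+1$ is positive, so that the substitution of the upper bound for $X$ indeed yields an upper bound for $Z$. This is a short monotonicity/convexity check on a quadratic in $s$ over $\{1,\dots,n\}$, using $n\ge 3$; it should be routine but it is the one place where the stated range of $(e,n)$ genuinely enters. Everything else is bookkeeping with the already-established antidesign identities and the counting lemmas.
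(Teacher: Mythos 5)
Parts (a) and (b) of your proof are correct and follow the paper's argument exactly. Part (c) is where the proposal breaks down, and the gap is in the derivation of the first inequality $X\le z_sz_{n-s}\Phi_s^n-q^{\deg(z_sz_{n-s}\Phi_s^n)}$, for two independent reasons.

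First, the logical step ``since $C\notin\EKR$ and $\EKR$ is an EKR-set, every chamber opposite to $C$ must lie outside $\EKR$ as well'' is simply false. The EKR/independence property only forbids two members of $\EKR$ from being opposite; a non-member $C$ imposes no constraint whatsoever on $\EKR$. To force chambers out of $\EKR$ one needs to find a suitable chamber \emph{inside} $\EKR$, not outside it. Second, the invocation of \Cref{L: basicnumbers} is geometrically incorrect: you chose $C$ with $C_s=S$, but then $S\cap C_s^\perp=S\cap S^\perp=S\neq\{0\}$, since $S$ is totally isotropic and hence $S\subseteq S^\perp$. So the hypothesis of \Cref{L: basicnumbers} fails. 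In fact the count you claim is $q^{\deg(z_sz_{n-s}\Phi_s^n)}$ is actually $0$: opposite chambers have opposite $s$-spaces, and $S$ is never opposite to itself, so no chamber $B$ with $B_s=S$ can be opposite to a chamber $C$ with $C_s=S$.

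The paper's route is different in precisely the step you skipped: lightness of $S$ gives $X<z_sz_{n-s}\Phi_s^n$, which by part (a) forces $Y>0$, i.e.\ there exists a chamber $C\in\EKR$ with $C_s^\perp\cap S=\{0\}$ (so $C_s\neq S$). For such a $C$, \Cref{L: basicnumbers} does apply, and the resulting $q^{\deg(z_sz_{n-s}\Phi_s^n)}$ chambers through $S$ opposite to $C$ are excluded from $\EKR$ because $C\in\EKR$. Note that (a) is used self-referentially here, which is the key idea you missed. Your subsequent derivation of the $Z$-bound from the $X$-bound is correct (including the observation that the sign of $1+\lambda_s$ must be checked; indeed $\deg(-\lambda_s)=\deg(h_{n,s})\geq n-1>0$ so $\lambda_s\le -q<-1$), but it rests on a first inequality that was not validly established.
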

\begin{proof}
We have $X+Y+Z=|\EKR|=\Phi_1^nz_n$. From the definition of $v_S$ (see \Cref{N: SubspaceAntidesignChambers}) 
we see that $\charvec_\EKR^\top v_S=-\lambda_s X+Y$ so \Cref{C: subspace_intersection} implies 
\begin{align*}
	\begin{split}
		Y&=-z_sz_{n-s} \Phi_s^n \lambda_s + \lambda_s X\\
		&=-\lambda_s (z_sz_{n-s} \Phi_s^n -X).
	\end{split}
\end{align*}
Since $Y\ge 0$, it follows that $X\le z_sz_{n-s}\Phi_s^n$. If equality holds, then $Y=0$, we are in situation (b) and
\begin{align*}
	\begin{split}
		Z&= |\EKR|-X\\
		&= \Phi_1^n z_n- z_sz_{n-s}\Phi_s^n.
	\end{split}
\end{align*} 
Now consider the situation when $X<z_sz_{n-s}\Phi_s^n$. Then $Y$ is positive, so there exists a chamber $C\in \EKR$ such that $C_s^\perp \cap S=\{ 0\}$. 
\Cref{L: basicnumbers} shows that the number of chambers that contain $S$ and are opposite to $C$ is $q^{\deg(z_sz_{n-s} \Phi_s^n)}$. Since the total numbers of chambers containing $S$ is $z_sz_{n-s}\Phi_s^n$, it follows that $X\le z_sz_{n-s}\Phi_s^n-q^{\deg(z_sz_{n-s} \Phi_s^n)}$. 
	Furthermore, we get
	\begin{align*}
		\begin{split}
			Z&=|\EKR|-X-Y\\
			&=\Phi_1^nz_n-X+ \lambda_s (z_sz_{n-s} \Phi_s^n -X)\\
			&=\Phi_1^nz_n+\lambda_s z_sz_{n-s}\Phi_s^n + (-\lambda_s -1)X\\
                 &\le \Phi_1^nz_n+\lambda_s z_sz_{n-s}\Phi_s^n + (-\lambda_s -1)(z_sz_{n-s}\Phi_s^n-q^{\deg(z_sz_{n-s} \Phi_s^n)})\\
                 &= \Phi_1^nz_n -z_sz_{n-s}\Phi_s^n+ (\lambda_s+1) q^{\deg(z_sz_{n-s} \Phi_s^n)} 
		\end{split}
	\end{align*}
	and we are in situation (c).
\end{proof}

\begin{cor} \label{C: light degree} 
In case (c) of \Cref{L: key for the proof}, we have
\begin{enumerate} [(a)]
	\item  $\deg(X)\le \deg(z_sz_{n-s} \Phi_s^n)-1$, and 
	\item $\deg(Z)\le (n-1)(n+e-1)-1$.
\end{enumerate}
\end{cor}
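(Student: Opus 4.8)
The plan is to read off both estimates directly from the explicit upper bounds for $X$ and $Z$ recorded in case (c) of \Cref{L: key for the proof}, discarding everything except the top-degree terms. Throughout I read ``$\deg(\cdot)\le d$'' for a non-negative quantity as the assertion that it is bounded above by a $q$-polynomial of degree at most $d$, which is the sense in which the corollary is used afterwards. For (a) this is then immediate: case (c) of \Cref{L: key for the proof} gives $0\le X\le z_sz_{n-s}\Phi_s^n-q^{\deg(z_sz_{n-s}\Phi_s^n)}$, and since $z_sz_{n-s}\Phi_s^n$ is a $q$-polynomial with leading coefficient $1$ (as noted after \Cref{L: basicnumbers}), the right-hand side is a $q$-polynomial whose top term has cancelled, hence of degree at most $\deg(z_sz_{n-s}\Phi_s^n)-1$.

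For (b) I would first isolate the identity $\lambda_s\,q^{\deg(z_sz_{n-s}\Phi_s^n)}=-q^{(n-1)(n+e-1)}$, which is exactly \Cref{R: degree eqn} combined with $-\lambda_{[n]}=q^{(n-1)(n+e-1)}$ and \Cref{L:degree_stuff}(d), using that $-\lambda_s$ is a power of $q$. Substituting this into the bound from case (c) yields
\begin{align*}
Z &\le \Phi_1^nz_n-z_sz_{n-s}\Phi_s^n+(\lambda_s+1)\,q^{\deg(z_sz_{n-s}\Phi_s^n)}\\
&= \bigl(\Phi_1^nz_n-q^{(n-1)(n+e-1)}\bigr)-\bigl(z_sz_{n-s}\Phi_s^n-q^{\deg(z_sz_{n-s}\Phi_s^n)}\bigr).
\end{align*}
The first bracket is a $q$-polynomial of degree at most $(n-1)(n+e-1)-1$ because $\Phi_1^nz_n$ is monic of degree $(n-1)(n+e-1)$ by \Cref{L:degree_stuff}(d); the second bracket is a $q$-polynomial of degree at most $\deg(z_sz_{n-s}\Phi_s^n)-1$, and this is itself $\le (n-1)(n+e-1)-1$ since $\deg(z_sz_{n-s}\Phi_s^n)\le (n-1)(n+e-1)$ — equivalently $\deg(-\lambda_s)\ge 0$, which one checks either from the defining formula for $-\lambda_s$ or by comparing \Cref{L:degree_stuff}(c) and (d). Hence $Z$ is squeezed between $0$ and a $q$-polynomial of degree at most $(n-1)(n+e-1)-1$, which gives (b).

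I do not expect a genuine obstacle: the statement is a bookkeeping consequence of \Cref{L: key for the proof} and \Cref{R: degree eqn}. The only points requiring a little care are pinning down what ``degree'' means for the a priori non-polynomial counts $X$ and $Z$, and tracking the exact cancellation of the leading terms, which rests on the monicity of both $z_sz_{n-s}\Phi_s^n$ and $\Phi_1^nz_n$ together with the degree identity in \Cref{R: degree eqn}.
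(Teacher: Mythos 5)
Your proof is correct and follows essentially the same route as the paper's: both parts rest on the bounds in case (c) of \Cref{L: key for the proof}, the monicity of $z_sz_{n-s}\Phi_s^n$ and $\Phi_1^nz_n$, and the cancellation $\lambda_s\,q^{\deg(z_sz_{n-s}\Phi_s^n)}=-q^{(n-1)(n+e-1)}$ supplied by \Cref{R: degree eqn}. The paper's proof invokes exactly this identity in compressed form; you merely spell out the rearrangement explicitly.
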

\begin{proof}
Part (a) follows directly from the fact that the leading coefficient of $z_sz_{n-s}\Phi_s^n$ is $1$.
For part (b) observe that $\deg(\Phi_1^nz_n)=\deg(-\lambda_{[n]})$. The statement follows from \Cref{R: degree eqn}. 
\end{proof}

\textbf{Notation.} For the remainder of the paper, we say that a non-negative function $f(q)$ is $O(q^d)$ if $f(q) \le c\cdot q^d$ for some constant $c$ that may depend on $n$. Furthermore, we say that $f(q)$ is $o(q^d)$ if for every positive constant $c$ there is a $q_0$, such that $|f(q)|\le c\cdot q^d$ for all $q\ge q_0$.  Finally, we say that $f(q)$ is $\Theta(q^d)$ if $f$ is a polynomial in $q$ of degree $d$.\\

 First, we consider the case where $\EKR$ contains a light chamber.

\begin{prop} \label{P: light chamber}
	Let $\EKR$ be a ratio-sharp Erd\H os-Ko-Rado set of chambers of $\PS(n,e,q)$. 
    If $q$ is large enough in terms of $n$, then $\EKR$ cannot contain a light chamber.
\end{prop}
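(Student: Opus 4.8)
The plan is to assume, for contradiction, that $\EKR$ contains a light chamber $C$ and count the chambers of $\EKR$ in a way that localizes around $C$. The idea is that every chamber $D\in\EKR$ has some common subspace with $C$ — if it had none, then $D_i^\perp\cap C_i=\{0\}$ for all $i$ would make $D$ opposite to $C$, contradicting that $\EKR$ is an EKR-set — more carefully, non-oppositeness of $D$ and $C$ forces $D_i^\perp\cap C_i\neq\{0\}$ for some $i\in[n]$. So if I set, for each $s\in[n]$, $Z_s(C)$ to be the number of chambers $D\in\EKR$ with $D_s\neq C_s$ but $D_s^\perp\cap C_s\neq\{0\}$ (together with $X_s(C)$ the number with $D_s=C_s$), then each chamber of $\EKR\setminus\{C\}$ is counted in $\bigcup_s$ of these, and the counts $|\EKR| = \Phi_1^nz_n$ must be recovered. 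Since $C$ is light, every $C_s$ is light, so part (c) of \Cref{L: key for the proof} and \Cref{C: light degree} apply to each $S=C_s$: we get $\deg(X_s(C))\le \deg(z_sz_{n-s}\Phi_s^n)-1$ and, crucially, $\deg(Z_s(C))\le (n-1)(n+e-1)-1 = \deg(\Phi_1^nz_n)-1$.

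The heart of the argument is then an inclusion–exclusion (or just a union bound) estimate:
\[
|\EKR| \le 1 + \sum_{s=1}^n \big(X_s(C) + Z_s(C)\big),
\]
since $C$ itself contributes $1$ and any other $D\in\EKR$ is, for some $s$, either equal to $C$ in coordinate $s$ (contributing to $X_s$) or in the "$Z_s$" position. Here I would need to be slightly careful about double counting, but since I only want an upper bound, the union bound above suffices. Now every summand on the right-hand side has degree at most $(n-1)(n+e-1)-1$ by \Cref{C: light degree}, and there are only $O(1)$ many terms (a constant depending on $n$). Hence the whole right-hand side is $O(q^{(n-1)(n+e-1)-1})$. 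But $|\EKR| = \Phi_1^nz_n$ is a polynomial in $q$ of degree exactly $(n-1)(n+e-1)$ with leading coefficient $1$ (by \Cref{L:degree_stuff}(d) and the fact that each $z_i$ and each Gaussian coefficient is monic), so $|\EKR| = \Theta(q^{(n-1)(n+e-1)})$. For $q$ large enough in terms of $n$, a degree-$(n-1)(n+e-1)$ polynomial exceeds any fixed $O(q^{(n-1)(n+e-1)-1})$ bound, which is the desired contradiction.

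The main obstacle I anticipate is making the counting inequality $|\EKR| \le 1 + \sum_s (X_s+Z_s)$ genuinely correct, i.e.\ verifying that every chamber $D\in\EKR$ other than $C$ really does fall into $X_s(C)\cup Z_s(C)$ for at least one $s$. This is exactly the statement that $D$ and $C$ are non-opposite, which holds because $\EKR$ is an EKR-set; non-oppositeness means precisely that there exists $i\in[n]$ with $D_i^\perp\cap C_i\neq\{0\}$, and for that $i$ either $D_i=C_i$ (so $D$ is counted in $X_i(C)$) or $D_i\neq C_i$ (so $D$ is counted in $Z_i(C)$). So the union bound is valid, and no delicate inclusion–exclusion correction is actually needed for the upper bound. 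The only remaining care is bookkeeping the $O(\cdot)$ constants: all of them depend only on $n$ (the number $n$ of terms, and the constants hidden in \Cref{C: light degree} which come from the fixed polynomials $z_sz_{n-s}\Phi_s^n$, $\lambda_s$, etc.), so "sufficiently large $q$ in terms of $n$" is exactly the right quantifier and the argument closes.
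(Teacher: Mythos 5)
Your proposal is correct and is essentially identical to the paper's own proof: both decompose $\EKR$ via the sets $X_s$ and $Z_s$ relative to the light chamber $C$, apply \Cref{L: key for the proof}(c) and \Cref{C: light degree} to each light subspace $C_s$, and conclude by comparing degrees against $\deg(\Phi_1^n z_n)=(n-1)(n+e-1)$. The only cosmetic differences are the superfluous ``$+1$'' in your union bound (the chamber $C$ is already counted in each $X_s$) and that the paper spells out, via \Cref{R: degree eqn}, why $\deg(z_s z_{n-s}\Phi_s^n) < (n-1)(n+e-1)$, a small step you leave implicit when asserting that the $X_s$ contributions are of degree at most $(n-1)(n+e-1)-1$.
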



\begin{proof}
Let $\EKR$ be a ratio-sharp EKR-set of chambers and assume that $\EKR$ contains a light chamber $C$.	Every chamber $B$ in $\EKR$ has to satisfy $B_{s}^\perp \cap C_s\neq \{0 \}$ for some $s \in [n]$. Let $X_s$ be the number of chambers $B$ with $B_s=C_s$ and let $Z_s$ be the number of chambers $B$ with $B_s\neq C_s$ and $B_{s}^\perp \cap C_s\neq \{0 \}$. Then
	\begin{align} \label{A: |F| light chamber}
		|\EKR|\leq \sum\limits_{s=1}^{n} (X_s+Z_s).
	\end{align}
	Recall that $\deg(|\EKR|)=(n-1)(n+e-1)$.
	\Cref{C: light degree} yields 
	\begin{align*}
		\begin{split}
			\deg(Z_s)& \le (n-1)(n+e-1)-1, \\
			\deg(X_s)&\le \deg(z_sz_{n-s} \Phi_s^n)-1.\\
		\end{split}
	\end{align*}

\Cref{R: degree eqn} implies that $\deg(z_sz_{n-s} \Phi_s^n)<\deg(\Phi_1^nz_n)$, hence $\deg(X_s) \le (n-1)(n+e-1)-1$. This implies that $|\EKR|$ is $O(q^{(n-1)(n+e-1)-1})$ and the statement follows.	
\end{proof}

Now we consider the case where every chamber is heavy. We start with some observations on heavy subspaces.

\begin{lemma}
Let $\EKR$ be a maximum EKR-set of chambers in $\PS(n,e,q)$.
\begin{enumerate} [(a)]
    \item Let $S_1$ and $S_2$ be two heavy subspaces. Then $S_1\cap  S_2^\perp\neq \{0 \}$.
    \item Let $S$ be a heavy subspace. Then every chamber $C$ in $\EKR$ satisfies $C_s^\perp \cap S\neq \{ 0\}$
\end{enumerate}
\end{lemma}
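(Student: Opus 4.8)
Both parts are statements about the intersection patterns of ``heavy'' subspaces, and the natural route is to argue by contradiction and exhibit two chambers of $\EKR$ that are opposite. For part (a), suppose $S_1$ and $S_2$ are heavy with $S_1 \cap S_2^\perp = \{0\}$. I want to produce a chamber $C^{(1)}$ containing $S_1$ and a chamber $C^{(2)}$ containing $S_2$ that are opposite to each other; since both chambers lie in $\EKR$ by heaviness, this contradicts $\EKR$ being an EKR-set. The key geometric input is that $S_1 \cap S_2^\perp = \{0\}$ guarantees the existence of opposite chambers through $S_1$ and $S_2$ respectively: this is exactly the sort of statement underlying \Cref{L: basicnumbers}, where the hypothesis $S \cap C_s^\perp = \{0\}$ is precisely what makes the count of opposite chambers through $S$ positive (in fact of full degree). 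More carefully, one should first note that $\dim S_1 = \dim S_2 =: s$ must hold for the obstruction to even be nontrivial when $S_1, S_2$ have different dimensions — actually the statement as phrased allows different dimensions, so I would instead invoke the general principle that in a polar space, if $S_1 \cap S_2^\perp = \{0\}$ then (passing to a suitable flag refinement) there exist opposite chambers $C^{(1)} \ni S_1$, $C^{(2)} \ni S_2$. This can be built up one dimension at a time, exactly as in the lemma preceding the \emph{Proof of \Cref{C: classification for flags} and \Cref{thm:main2}}: extend $S_1$ and $S_2$ greedily, at each step using that ``the majority'' of extensions is opposite (Lemmas~\ref{L: heavy power m-space} and~\ref{L: basicnumbers}, together with \Cref{L: extend flags}), so that the relevant set of good extensions is nonempty for $q$ large (indeed it is nonempty for all $q$ by the degree arguments).

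For part (b), let $S$ be heavy with $\dim S = s$, and suppose for contradiction that some chamber $C \in \EKR$ has $C_s^\perp \cap S = \{0\}$. Then I apply \Cref{L: basicnumbers} directly with this $S$ and this $C$: the number of chambers that contain $S$ and are opposite to $C$ is $q^{\deg(z_sz_{n-s}\Phi_s^n)} > 0$. Any such chamber $B$ contains the heavy subspace $S$, hence $B \in \EKR$; but $B$ is opposite to $C \in \EKR$, contradicting that $\EKR$ is an EKR-set. So every chamber of $\EKR$ must satisfy $C_s^\perp \cap S \neq \{0\}$. (Note part (b) does not even need $q$ large.) It is worth recording that part (b) makes part (a) almost immediate as well: if $S_1$ is heavy and $S_2$ is heavy, pick any chamber $C \in \EKR$ containing $S_2$ (which exists, and indeed all chambers through $S_2$ lie in $\EKR$); writing $s_1 = \dim S_1$, part (b) applied to $S_1$ gives $C_{s_1}^\perp \cap S_1 \neq \{0\}$, and since $C_{s_1} \subseteq S_2$ would force... hmm, this needs $C_{s_1} = S_2$ or $S_2 \subseteq C_{s_1}$ depending on dimensions. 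The cleanest version: choose $C$ to be any chamber through $S_2$, so $C_{s_2} = S_2$; then part (b) applied to the heavy subspace $S_1$ yields $C_{s_1}^\perp \cap S_1 \neq \{0\}$, which is not quite $S_2^\perp \cap S_1 \neq \{0\}$ unless $s_1 = s_2$.

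So I would in fact structure it as: prove (b) first by the one-step argument above; then for (a), take the case $\dim S_1 = \dim S_2 = s$ (the case of unequal dimensions reduces to this by noting a heavy subspace contains/is contained in heavy subspaces of all intermediate dimensions — any subspace containing a heavy subspace is heavy, and for the smaller one, any $s'$-subspace of a heavy $s$-space is heavy too since chambers through it... actually no, that's false; I should be careful and perhaps the intended statement really is only about equal dimensions in its applications, or the unequal case needs its own small argument). For equal dimensions $s$: pick $C \in \EKR$ with $C_s = S_2$; apply (b) to $S_1$ to get $S_2^\perp \cap S_1 = C_s^\perp \cap S_1 \neq \{0\}$. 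Done, and this even gives (a) for $q$ arbitrary. \textbf{The main obstacle} is therefore not the heavy-subspace argument itself, which is short, but rather pinning down the precise statement for subspaces of differing dimensions — whether the lemma is only ever invoked with equal dimensions, and if not, supplying the (routine but necessary) extension-of-flags argument showing that two subspaces $S_1, S_2$ with $S_1 \cap S_2^\perp = \{0\}$ always sit inside a pair of opposite chambers, which is where Lemmas~\ref{L: basicnumbers}, \ref{L: heavy power m-space} and \ref{L: extend flags} do the work.
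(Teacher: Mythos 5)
Your proof of part (b) is correct and makes explicit the appeal to \Cref{L: basicnumbers} that the paper leaves implicit (``$S$ can be extended to a chamber that is opposite to $C$''); this is essentially the paper's argument. Your reduction of part (a) to part (b) in the equal-dimension case (choose $C\in\EKR$ with $C_s=S_2$ and apply (b) to $S_1$) is also correct and is a genuinely different route: the paper instead extends $S_1$ and $S_2$ directly to a pair of opposite chambers, which it asserts without detail, whereas your version funnels the required extension through \Cref{L: basicnumbers} via part (b).

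Where your proposal stops short is the case $\dim S_1 = s_1 < s_2 = \dim S_2$, which you correctly flag but leave open. Your two tentative reductions (that a superspace of a heavy subspace is heavy, or that a subspace of a heavy subspace is heavy) are indeed both false, as you noticed. The missing step, however, is short and stays within your framework. Assume $S_1\cap S_2^\perp=\{0\}$; we may take $s_1\le s_2$, since otherwise $\dim S_1+\dim S_2^\perp>d$ forces a nontrivial intersection. Then $S_1^\perp + S_2$ is the whole space, so $\dim(S_1^\perp\cap S_2)=s_2-s_1$, and one can choose an $s_1$-dimensional subspace $T$ of $S_2$ with $T\cap S_1^\perp=\{0\}$, equivalently $T^\perp\cap S_1=\{0\}$ (these two conditions agree for subspaces of equal dimension). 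Now take any chamber $C$ with $C_{s_1}=T$ and $C_{s_2}=S_2$; since $S_2$ is heavy, $C\in\EKR$. Part (b) applied to the heavy subspace $S_1$ and this chamber $C$ yields $T^\perp\cap S_1 = C_{s_1}^\perp\cap S_1\neq\{0\}$, a contradiction. So the gap is real but routine, and once filled your argument is a clean and slightly more self-contained alternative to the paper's.
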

\begin{proof}
    For (a) assume that this is not the case. Then we can extend $S_1$ and $S_2$ to two chambers in $\EKR$ that are opposite, since $S_1$ and $S_2$ are heavy. However, $\EKR$ does not contain opposite chambers so this is a contradiction. Similarly, we can assume that (b) is false. Then $S$ can be extended to a chamber that is opposite to $C$, which is a contradiction.	
\end{proof}

\begin{lemma} \label{L: heavy dimension}
	Assume that a ratio-sharp EKR-set $\EKR$ contains a chamber with the property that its $s$-subspace $S$ is light for some $s \in [n]$. 
    Then the number of chambers in $\EKR$ whose $s$-subspace is heavy is $O(q^{(n-1)(n+e-1)-1})$.
\end{lemma}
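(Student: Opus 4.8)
The idea is to bound the number of chambers in $\EKR$ whose $s$-subspace is heavy by summing contributions over all heavy $s$-spaces, and to show that there cannot be too many of them. Fix the chamber $C \in \EKR$ whose $s$-subspace $S=C_s$ is light. By \Cref{L: key for the proof}(c), since $S$ is light, the number $X$ of chambers of $\EKR$ with $C'_s = S$ satisfies $\deg(X) \le \deg(z_sz_{n-s}\Phi_s^n)-1$, and in particular $X = O(q^{\deg(z_sz_{n-s}\Phi_s^n)-1})$. Now consider any heavy $s$-space $S'$. By the previous lemma (part (a)), any two heavy subspaces $S'$ and $S''$ satisfy $S' \cap (S'')^\perp \neq \{0\}$; I would like to say more, namely that the light $s$-space $S$ and the heavy $s$-space $S'$ must satisfy $S^\perp \cap S' \neq \{0\}$. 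This follows because $S'$ is heavy, so it extends to a chamber $B \in \EKR$ with $B_s = S'$, and if $S^\perp \cap S' = \{0\}$ then $S$ could be extended to a chamber $C'$ opposite to $B$ — but $C' $ need not lie in $\EKR$. So instead I would apply part (b) of the previous lemma directly: if $S'$ is heavy, then \emph{every} chamber of $\EKR$ has its $s$-subspace meeting $(S')^\perp$ nontrivially; applying this with the chamber $C$ gives $C_s^\perp \cap S' = S^\perp \cap S' \neq \{0\}$.

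So every heavy $s$-space lies in the set of $s$-spaces $S'$ with $S^\perp \cap S' \neq \{0\}$. The plan is now to count chambers of $\EKR$ grouped by their heavy $s$-subspace. A chamber counted here has $C'_s$ heavy, hence $C'_s = S$ (contributing at most $X$ chambers, which is $O(q^{\deg(z_sz_{n-s}\Phi_s^n)-1})$) or $C'_s \neq S$ with $(C'_s)^\perp \cap S \neq \{0\}$ — but by symmetry of the perp relation on subspaces of equal dimension, $(C'_s)^\perp \cap S \neq \{0\}$ is equivalent to $S^\perp \cap C'_s \neq \{0\}$, which puts us in the situation of the quantity $Z$ from \Cref{L: key for the proof} applied to the light space $S$. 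By \Cref{C: light degree}(b), $Z = O(q^{(n-1)(n+e-1)-1})$. Therefore the total count is at most $X + Z = O(q^{(n-1)(n+e-1)-1})$, using \Cref{R: degree eqn} to see that $\deg(z_sz_{n-s}\Phi_s^n) \le (n-1)(n+e-1)-1 + 1$, i.e. $\deg(X) \le (n-1)(n+e-1)-1$. This is exactly the asserted bound.

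The main obstacle, and the point requiring care, is justifying that \emph{every} chamber of $\EKR$ whose $s$-subspace is heavy actually falls into the two cases $X$ and $Z$ governed by the single light space $S = C_s$ — that is, that a heavy $s$-space $S'$ necessarily satisfies $S^\perp \cap S' \neq \{0\}$. This is where part (b) of the preceding lemma is essential: it is a global constraint forcing every chamber of $\EKR$ (in particular $C$) to meet the tangent space of any heavy subspace. Once this is in hand, the bound is just a matter of adding the two estimates $\deg(X) \le (n-1)(n+e-1)-1$ (from \Cref{L: key for the proof}(c) and \Cref{R: degree eqn}) and $\deg(Z) \le (n-1)(n+e-1)-1$ (from \Cref{C: light degree}(b)), and concluding with the $O(\cdot)$ notation fixed just before \Cref{P: light chamber}.
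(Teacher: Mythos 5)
Your proposal is correct and essentially matches the paper's proof: the crux in both is to invoke part~(b) of the lemma on heavy subspaces (applied to the heavy $s$-space as the ``$S$'' of that lemma, and to the chamber $C$ with light $s$-subspace) to conclude that every heavy $s$-space $T$ satisfies $T^\perp\cap S\neq\{0\}$, so that all chambers in $\EKR$ with heavy $s$-subspace are already counted in the quantity $Z$ from \Cref{L: key for the proof}, which has degree at most $(n-1)(n+e-1)-1$ by \Cref{C: light degree}(b). One small superfluity: you also include the quantity $X$ (chambers with $C'_s=S$), but since $S$ is light, no chamber whose $s$-subspace is heavy can have $C'_s=S$, so that case is vacuous and the bound $Z$ alone suffices, exactly as in the paper.
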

\begin{proof}
	Let $S$ be a light $s$-space. Then \Cref{L: key for the proof} and \Cref{C: light degree} imply that the number $Z$ of chambers $C$ that contain an $s$-space $C_s$ with $C_s^\perp \cap S \neq \{ 0\}$ satisfies $\deg(Z)\le (n-1)(n+e-1)-1$. Since for every heavy $s$-space $C_s$ we have that $C_s^\perp$ meets $S$, this number $Z$ includes all the chambers whose $s$-space is heavy. 
\end{proof}

\begin{cor} \label{C: chambers - blow-ups}
Let $\EKR$ be a ratio-sharp EKR-set of chambers. For sufficiently large $q$ (in terms of $n$) there exists an $s \in [n]$ such that every chamber in $\EKR$ contains a heavy $s$-subspace.
\end{cor}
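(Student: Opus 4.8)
\textbf{Proof proposal for \Cref{C: chambers - blow-ups}.}

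The plan is to combine \Cref{P: light chamber} with \Cref{L: heavy dimension} via a simple counting argument. By \Cref{P: light chamber}, for $q$ large enough in terms of $n$, every chamber in $\EKR$ is heavy, i.e.\ contains at least one heavy subspace. The goal is now to promote this pointwise statement to the uniform statement that there is a single dimension $s \in [n]$ serving \emph{all} chambers simultaneously. Suppose, for contradiction, that this fails: then for every $s \in [n]$ there is some chamber $C^{(s)} \in \EKR$ whose $s$-subspace $C^{(s)}_s$ is light. The hypothesis of \Cref{L: heavy dimension} is therefore satisfied for each $s$, and we conclude that the number of chambers in $\EKR$ whose $s$-subspace is heavy is $O(q^{(n-1)(n+e-1)-1})$.

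Now I would double count. Since every chamber in $\EKR$ is heavy, it contains a heavy $i$-subspace for at least one $i \in [n]$; hence
\[
|\EKR| \le \sum_{s=1}^{n} \bigl(\text{number of chambers in }\EKR\text{ whose }s\text{-subspace is heavy}\bigr).
\]
Each summand is $O(q^{(n-1)(n+e-1)-1})$ by the previous paragraph, so the right-hand side is $O(q^{(n-1)(n+e-1)-1})$. But $|\EKR| = \Phi_1^n z_n$, which by \Cref{L:degree_stuff}(d) is $\Theta(q^{(n-1)(n+e-1)})$. For $q$ sufficiently large in terms of $n$ this is a contradiction, which proves that some fixed $s \in [n]$ works for all chambers of $\EKR$.

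I do not expect any real obstacle here: the technical content has already been front-loaded into \Cref{P: light chamber} and \Cref{L: heavy dimension}, and what remains is the pigeonhole/degree-comparison bookkeeping. The one point requiring a little care is that \Cref{L: heavy dimension} is invoked once for each $s$, and each invocation has its own threshold $q_0(s,n)$; since there are only $n$ values of $s$ (and one further threshold from \Cref{P: light chamber}), taking the maximum of these finitely many thresholds still yields a bound depending only on $n$, consistent with the statement's phrasing ``for sufficiently large $q$ (in terms of $n$)''.
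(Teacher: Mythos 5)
Your proof is correct and takes essentially the same approach as the paper's: negate the statement, apply \Cref{P: light chamber} to conclude every chamber of $\EKR$ is heavy, apply \Cref{L: heavy dimension} for each $s$ to bound the number of chambers with heavy $s$-subspace, and derive a contradiction from the degree comparison. Your version simply spells out the double-count more explicitly, and your remark about taking the maximum of finitely many thresholds is correct but implicit in the paper.
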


\begin{proof}
	Assume that this is not the case. Then for every $s \in [n]$ there is a chamber whose $s$-space is light. By \Cref{P: light chamber} and \Cref{L: heavy dimension} we get that $|\EKR|$ is at most the sum of $n$ numbers that are all $O(q^{(n-1)(n+e-1)-1})$. The statement follows, as this stands in contradiction to the size of $\EKR$ being $\Theta(q^{(n-1)(n+e-1)})$. 
\end{proof}

The last corollary states, together with \Cref{P: light chamber} that for $q$ large enough there are in some sense only $n$ possibilities left to construct a maximum EKR-set of chambers: we have to start with a maximum EKR-set of $s$-spaces and consider its blow-up. These are precisely the examples considered in \Cref{E: blow-ups of s-spaces}. In the next section, we will show that for $s\notin \{1,n\}$ the resulting set of chambers is not a maximum one. In view of \Cref{R: blow-up} is suffices to show that for $s\notin \{1,n\}$ a maximum EKR-set of $s$-spaces is not ratio-sharp.

\section{On ratio-sharp EKR-sets of $s$-spaces}\label{S:EKR_s-spaces}
Recall that we only consider polar spaces $\PS(n,e,q)$ with $e\ge 1$, or $n$ even. First, we recollect the ratio bound for $\Gamma_s$.

\begin{lemma} \label{C: ratio bound (s)}
	The ratio bound $h_{n,s}$ for the independence number of $\Gamma_s$ is 
	$$h_{n,s}= \Gauss{n}{s} \prod\limits_{i=2}^s (q^{n+e-i}+1) =\Gauss{n}{s} \prod\limits_{i=1}^{s-1} (q^{n+e-1-i}+1)$$
 and $\deg(h_{n,s})=s(2n+e-1/2-3s/2)-n-e+1$.
\end{lemma}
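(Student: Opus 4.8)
The statement asserts two things: an explicit formula for the ratio bound $h_{n,s}$ of $\Gamma_s$ (in two equivalent product forms), and the computation of its degree as a $q$-polynomial. The plan is to recall that the ratio bound for a regular graph $\Gamma$ on $N$ vertices with valency $d$ and smallest eigenvalue $\lambda < 0$ states $\alpha(\Gamma) \le N \cdot \frac{-\lambda}{d - \lambda}$. Here $N = |\cF_s| = \Phi_0^s$ is the number of $s$-spaces (by \Cref{C: Phi}(a)), and we already know from the excerpt that $d_s = -\lambda_s q^{n+e-1}$ where $-\lambda_s = q^{s(2n+e-1/2-3s/2)-n-e+1}$. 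Plugging these in gives
\[
h_{n,s} = \Phi_0^s \cdot \frac{-\lambda_s}{d_s - \lambda_s} = \Phi_0^s \cdot \frac{-\lambda_s}{-\lambda_s(q^{n+e-1}+1)} = \frac{\Phi_0^s}{q^{n+e-1}+1}.
\]
This already matches \Cref{thm:upperbound} restricted to $J = \{s\}$, so in fact the formula $h_{n,s} = |\cF_s|/(q^{n+e-1}+1)$ is essentially immediate from what is stated earlier, and the only real content is unwinding $\Phi_0^s/(q^{n+e-1}+1)$ into the claimed product.

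\textbf{Key steps.} First I would substitute the explicit value $\Phi_0^s = \Gauss{n}{s}\prod_{i=1}^{s}(q^{n+e-i}+1)$ from \Cref{L: extend subspaces} (with $m=0$). Dividing by $q^{n+e-1}+1$ cancels exactly the $i=1$ factor of the product, leaving
\[
h_{n,s} = \Gauss{n}{s}\prod_{i=2}^{s}(q^{n+e-i}+1),
\]
which is the first claimed expression. The second expression, $\Gauss{n}{s}\prod_{i=1}^{s-1}(q^{n+e-1-i}+1)$, is obtained from the first by the reindexing $i \mapsto i+1$; this is a one-line substitution. For the degree computation, I would use that $\deg\Gauss{n}{s} = s(n-s)$ (as recorded in the proof of \Cref{L:degree_stuff}) and that each factor $q^{n+e-i}+1$ contributes degree $n+e-i$, so
\[
\deg(h_{n,s}) = s(n-s) + \sum_{i=2}^{s}(n+e-i).
\]
Evaluating the sum by Gauss summation, $\sum_{i=2}^{s}(n+e-i) = (s-1)(n+e) - \sum_{i=2}^{s} i = (s-1)(n+e) - \big(\tfrac{s(s+1)}{2}-1\big)$, and then routine algebraic simplification combines this with $s(n-s)$ to produce $s(2n+e-1/2-3s/2)-n-e+1$. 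One can sanity-check this against \Cref{R: degree eqn}: indeed $\deg(h_{n,s})$ should equal $\deg(-\lambda_s)$ only up to the degree of $|\cF_s|$ corrections — more precisely the statement's degree is exactly $\deg(-\lambda_s) = \deg(\Phi_1^n z_n) - \deg(z_s z_{n-s}\Phi_s^n)$, giving a free consistency check.

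\textbf{Main obstacle.} There is no conceptual obstacle here; the lemma is a bookkeeping statement. The only place to be careful is the arithmetic in the degree computation — keeping track of whether the product runs to $s$ or $s-1$ and handling the half-integer $e$ correctly, since $e$ may be $1/2$ or $3/2$ and the final exponent involves the fractions $1/2$ and $3s/2$. I would double-check the Gauss summation step and verify the final expression matches $\deg(-\lambda_s)$ as recorded just before \Cref{R: degree eqn}, which pins down any sign or off-by-one error. I should also note the edge case $s=1$: then the product $\prod_{i=2}^{1}$ is empty, $h_{n,1} = \Gauss{n}{1}$, which is the number of points, and the degree formula gives $1\cdot(2n+e-1/2-3/2)-n-e+1 = n-1$, consistent with $\deg\Gauss{n}{1} = n-1$; so the formula holds at the boundary as well.
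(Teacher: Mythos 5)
Your proposal is correct and follows essentially the same route as the paper: apply \Cref{thm:upperbound} with $J=\{s\}$ to get $h_{n,s}=|\cF_s|/(q^{n+e-1}+1)$, substitute $|\cF_s|=\Phi_0^s=\Gauss{n}{s}\prod_{i=1}^s(q^{n+e-i}+1)$ from \Cref{L: extend subspaces}, cancel the $i=1$ factor, and compute the degree. The paper compresses the cancellation, reindexing, and degree computation into ``the statement follows immediately,'' whereas you carry them out explicitly (and your first-principles derivation via $d_s=-\lambda_s q^{n+e-1}$ is redundant with the cited theorem, as you yourself observe); the arithmetic and the $s=1$ sanity check are both correct.
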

\begin{proof}
    By the ratio bound of \Cref{thm:upperbound}, we get 
    \begin{align*}
        h_{n,s}=\frac{|\cF_s|}{q^{n+e-1}+1}.
    \end{align*}
    The number of $s$-spaces in $\PS(n,e,q)$ on the other hand is, according to \Cref{L: extend subspaces}, exactly $|\cF_s|=\Gauss{n}{s}\prod\limits_{i=1}^s(q^{n+e-i}+1)$. The statement follows immediately.
\end{proof}

First, we recall
\begin{align*}
    \lambda_s=  -q^{s(2n+e-1/2-3s/2)-n-e+1} \text{\ and  } \deg(-\lambda_s)=\deg(h_{n,s}).
\end{align*}
It was defined in the introduction that we will call an EKR-set of $s$-spaces \textit{ratio-sharp} if it attains this bound.
Observe that the ratio bound $h_{n,s}$ is the same as the value $B_s$ that was calculated at the end of \Cref{Section: Classification of ekr}. In \Cref{Subsection: Maximal examples} we already showed that the ratio bound is tight for $s\in \{1,n \}$.\\

Our goal in this section is to show that the ratio bound is not tight for $1<s<n$. For $s=2$ this is already known \cite{metsch_pairwise-non-opp-lines} and in fact in this case the ratio bound is off by a factor of $q$. 
For this reason, we assume for the remainder of this section that $2 < s < n$. We note that in fact our proof requires these assumptions, see \Cref{cor:EKRboundinperp} and \Cref{P: finish proof}.

Our idea is to use the subspace-based antidesign of \Cref{L:m-spaces_m<s} and combine the information provided by this antidesign with geometric observations.  

\begin{notation}
	Let $\EKR_s$ be an EKR-set of $s$-spaces and let $1\leq m <s$ be an integer. For every $m$-space $M$ of $\PS(n,e,q)$ the \emph{weight} of $M$ (with respect to $\EKR_s$) is the number of $s$-spaces of $\EKR_s$ that contain $M$. We call $M$ \emph{heavy} (with respect to $\EKR_s$) if all $s$-spaces that contain $M$ belong to $\EKR_s$; otherwise, we call $M$ \emph{light}.
\end{notation}

We will now prove a lemma that is an analogue of \Cref{L: key for the proof}.

\begin{lemma} \label{L:XYZform<s}
Let $1\leq m<s$ be an integer and $M$ an $m$-space in $\PS(n,e,q)$. Furthermore, let $\EKR_s$ be a ratio-sharp EKR-set of $s$-spaces. Consider the $s$-spaces $S \in \EKR_s$, let $X$ be the number of these with $M \subseteq S$, let $Y$ the number of these with $M\cap  S^\perp=\{0\}$, and let $Z$ the number of these with $M\not\subseteq S$ and $M\cap  S^\perp\neq \{0\}$.
\begin{enumerate}[(a)]
\item We have 
\begin{align*}
    & Y= -\lambda_s \cdot q^{-\deg(\Phi_m^s)} \cdot (\Phi_m^s-X), \\
    & Z= X(-\lambda_s \cdot q^{-\deg(\Phi_m^s)}-1)+h_{n,s}+\lambda_s \cdot q^{-\deg(\Phi_m^s)} \cdot \Phi_m^s .
\end{align*}
\item If $M$ is heavy, then $X=\Phi_m^s$. In this case all $s$-spaces $S\in \EKR_s$ satisfy $M\cap S^\perp\neq \{0\}$.
\item If $M$ is light, then 
\begin{align*}
    X\le \Phi_m^s-q^{\deg(\Phi_m^s)} \text{\ and } Y\ge -\lambda_s .
\end{align*}
In this case we have $\deg(Y)=\deg(-\lambda_s)=\deg(h_{n,s})$.
\end{enumerate}
\end{lemma}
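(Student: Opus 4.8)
The statement is the exact $s$-space analogue of \Cref{L: key for the proof}, so the proof follows the same template, now using the subspace-based antidesign $v_M$ from \Cref{L:m-spaces_m<s} in place of $v_S$ from \Cref{N: SubspaceAntidesignChambers}, and \Cref{C: Intersection m-spaces m<s} in place of \Cref{C: subspace_intersection}. The three quantities $X,Y,Z$ partition $\EKR_s$ according to the mutual position of $S$ with the fixed $m$-space $M$: either $M\subseteq S$ (counted by $X$), or $M\cap S^\perp=\{0\}$ (counted by $Y$), or $M\not\subseteq S$ but $M\cap S^\perp\neq\{0\}$ (counted by $Z$). These three cases are mutually exclusive and, since $M\subseteq S$ forces $M\cap S^\perp\neq\{0\}$ (as $M\subseteq S\cap S^\perp$ would need $M\cap S^\perp$ nontrivial — more precisely $M\subseteq S$ implies $M=M\cap S\subseteq S$, and $M\subseteq S^\perp$ fails in general but $M\cap S^\perp\supseteq M\cap M^\perp$; one checks the trichotomy is complete because every $S$ falls into exactly one bucket by the value of $v_M(S)$), they are exhaustive, giving $X+Y+Z=|\EKR_s|=h_{n,s}$.

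\textbf{Part (a).} By the definition of $v_M$ in \Cref{L:m-spaces_m<s}, we have $\charvec_{\EKR_s}^\top v_M = -\lambda_s X + q^{\deg(\Phi_m^s)} Y$, since $v_M$ takes the value $-\lambda_s$ on the $s$-spaces counted by $X$, the value $q^{\deg(\Phi_m^s)}$ on those counted by $Y$, and $0$ on those counted by $Z$. On the other hand, \Cref{C: Intersection m-spaces m<s} gives $v_M^\top\charvec_{\EKR_s} = -\lambda_s\Phi_m^s$. Equating and solving for $Y$:
\[
q^{\deg(\Phi_m^s)} Y = -\lambda_s\Phi_m^s + \lambda_s X = -\lambda_s(\Phi_m^s - X),
\]
so $Y = -\lambda_s q^{-\deg(\Phi_m^s)}(\Phi_m^s - X)$, which is the first displayed formula. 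For the second, substitute into $Z = h_{n,s} - X - Y$:
\[
Z = h_{n,s} - X + \lambda_s q^{-\deg(\Phi_m^s)}(\Phi_m^s - X) = X\bigl(-\lambda_s q^{-\deg(\Phi_m^s)} - 1\bigr) + h_{n,s} + \lambda_s q^{-\deg(\Phi_m^s)}\Phi_m^s,
\]
as claimed. (Note $-\lambda_s q^{-\deg(\Phi_m^s)}$ is a positive power of $q$ by \Cref{L:degree_stuff}, analogous to \Cref{R: degree eqn}; one should record this.)

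\textbf{Parts (b) and (c).} Since $Y\ge 0$ and $-\lambda_s q^{-\deg(\Phi_m^s)}>0$, the formula for $Y$ forces $X\le \Phi_m^s$. If $M$ is heavy then \emph{every} $s$-space through $M$ lies in $\EKR_s$, and there are exactly $\Phi_m^s$ of them by \Cref{L: extend subspaces}, so $X=\Phi_m^s$; then $Y=0$, meaning no $S\in\EKR_s$ satisfies $M\cap S^\perp=\{0\}$, i.e. every $S\in\EKR_s$ has $M\cap S^\perp\neq\{0\}$. If $M$ is light, then some $s$-space $T$ through $M$ is \emph{not} in $\EKR_s$; since $\EKR_s$ is an EKR-set, every $S\in\EKR_s$ is non-opposite to $T$. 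By \Cref{L: heavy power m-space}, the number of $s$-spaces through $M$ that \emph{are} opposite to $T$ is $q^{\deg(\Phi_m^s)}$; all of these are excluded from $\EKR_s$, so $X\le \Phi_m^s - q^{\deg(\Phi_m^s)}$. Plugging this upper bound for $X$ into the formula for $Y$ (which is decreasing in $X$) gives
\[
Y \ge -\lambda_s q^{-\deg(\Phi_m^s)}\bigl(\Phi_m^s - (\Phi_m^s - q^{\deg(\Phi_m^s)})\bigr) = -\lambda_s q^{-\deg(\Phi_m^s)} \cdot q^{\deg(\Phi_m^s)} = -\lambda_s.
\]
Finally, $Y\le h_{n,s}$ trivially, and $\deg(h_{n,s})=\deg(-\lambda_s)$ by \Cref{C: ratio bound (s)}, so $\deg(-\lambda_s)\le \deg(Y)\le \deg(h_{n,s})=\deg(-\lambda_s)$, forcing $\deg(Y)=\deg(-\lambda_s)=\deg(h_{n,s})$.

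\textbf{Main obstacle.} There is no real obstacle: every ingredient (the antidesign identity, the counting of $s$-spaces opposite to a fixed one through $M$, the degree comparison) is already established in the excerpt, and the argument is a direct transcription of the chamber case. The only point requiring a moment's care is verifying the trichotomy $X+Y+Z=h_{n,s}$ is a genuine partition — i.e. that $v_M$'s three output values $-\lambda_s$, $q^{\deg(\Phi_m^s)}$, $0$ correspond exactly to the three stated positional relations and are pairwise distinct (true since $-\lambda_s<0<q^{\deg(\Phi_m^s)}$) — and keeping track of the signs, since $\lambda_s$ is negative so $-\lambda_s q^{-\deg(\Phi_m^s)}$ is a genuine positive power of $q$.
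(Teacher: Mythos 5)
Parts (a) and (b) of your argument match the paper's proof exactly: you derive the formula for $Y$ from the antidesign identity $\charvec_{\EKR_s}^\top v_M=-\lambda_s X+q^{\deg(\Phi_m^s)}Y$ combined with \Cref{C: Intersection m-spaces m<s}, and then solve $X+Y+Z=h_{n,s}$ for $Z$; for (b) heaviness gives $X=\Phi_m^s$ directly. That part is fine.

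Part (c) contains a genuine gap. You take $T$ to be an $s$-space \emph{through} $M$ that is not in $\EKR_s$, and then (i) assert ``since $\EKR_s$ is an EKR-set, every $S\in\EKR_s$ is non-opposite to $T$'', and (ii) apply \Cref{L: heavy power m-space} to $T$ to count $q^{\deg(\Phi_m^s)}$ many $s$-spaces through $M$ opposite to $T$. Claim (i) is simply false: an EKR-set forbids opposite pairs \emph{within} the set, and says nothing about a $T$ outside it; indeed for a maximal EKR-set some $S\in\EKR_s$ must be opposite to $T$, otherwise $T$ could be added. Claim (ii) fails for a more structural reason: since $M\subseteq T$ and $T$ is totally isotropic, $M\subseteq T\subseteq T^\perp$, so $M\cap T^\perp=M\neq\{0\}$ and the hypothesis of \Cref{L: heavy power m-space} is not met. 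In fact \emph{no} $s$-space through $M$ is opposite to $T$ (if $M\subseteq S$ and $M\subseteq T^\perp$ then $M\subseteq S\cap T^\perp\neq\{0\}$), so your count would have to be $0$, not $q^{\deg(\Phi_m^s)}$, and the argument collapses to the trivial bound $X\le\Phi_m^s$.

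The correct witness is chosen differently. Since $M$ is light, $X<\Phi_m^s$, so by your own formula from part (a) we get $Y>0$; hence there exists $S\in\EKR_s$ with $M\cap S^\perp=\{0\}$. This $S$ does satisfy the hypothesis of \Cref{L: heavy power m-space}, which gives $q^{\deg(\Phi_m^s)}$ many $s$-spaces through $M$ opposite to $S$; as $S\in\EKR_s$, none of them lie in $\EKR_s$, yielding $X\le\Phi_m^s-q^{\deg(\Phi_m^s)}$. Your subsequent substitution into the formula for $Y$ and the degree argument via $-\lambda_s\le Y\le h_{n,s}$ are then correct.
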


\begin{proof} 
For the proof, we use $\Phi:=\Phi_m^s$ and $\lambda:=\lambda_s$ to simplify notation.
From the definition of $v_M$ by \Cref{L:m-spaces_m<s} we see that 
$\charvec_{\EKR_s}^\top v_M=-\lambda X+q^{\deg(\Phi)}Y $ 
so \Cref{C: Intersection m-spaces m<s} implies
\begin{align*}
	\begin{split}
		Y&=q^{-\deg(\Phi)}(-\lambda \Phi +\lambda X)\\
            &=-\lambda \cdot q^{-\deg(\Phi)}(\Phi-X).
		\end{split}
\end{align*}
On the other hand we have $X+Y+Z=h_{n,s}$ and hence
\begin{align*}
    \begin{split}
        Z&= h_{n,s}-X-Y\\
        &= h_{n,s}-X+\lambda \cdot q^{-\deg(\Phi)}(\Phi-X)\\
        &=X(-\lambda \cdot q^{-\deg(\Phi)}-1)+ h_{n,s} +  \lambda \cdot q^{-\deg(\Phi)} \cdot \Phi.
    \end{split}
\end{align*}
If $M$ is heavy then all the $s$-spaces that contain $M$ are in $\EKR_s$, and so $X = \Phi$,
which implies $Y = 0$.\\

 If $M$ is light, then $Y$ is positive and hence there exists $S \in \EKR_s$ such that $M\cap  S^\perp=\{0\}$. The number of $s$-spaces through $M$ that do not meet $S^\perp$ is, according to \Cref{L: heavy power m-space}, exactly $q^{(s-m)(2n-s-m+e-\frac{s-m+1}{2})}=q^{\deg(\Phi)}$. Therefore in this case $X \leq \Phi-q^{\deg(\Phi)}$ and hence 
 \begin{align*}
     Y \geq   -\lambda \cdot q^{-\deg(\Phi)}(\Phi-(\Phi-q^{\deg(\Phi)})) =-\lambda.
 \end{align*}
The last statement follows immediately from the observation $-\lambda \leq Y \leq h_{n,s}$.
\end{proof}

\begin{lemma}
Let $P$ be a point, and let $S$ and $T$ be $s$-subspaces of $\PS(n,e,q)$ with $P\notin S,T \subseteq P^\perp$. Then $S$ and $T$ are opposite in $\PS(n,e,q)$ if and only if $\langle S,P\rangle$ and $\langle T,P\rangle$ are opposite in the polar space induced in $P^\perp/P$.
\end{lemma}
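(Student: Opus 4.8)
The plan is to argue directly with the definition of oppositeness in terms of tangent spaces, keeping careful track of how perpendicularity behaves in the quotient polar space $P^\perp/P$. Write $\overline{V} = P^\perp/P$ for the underlying vector space of the quotient polar space, and for a subspace $U$ with $P \le U \le P^\perp$ write $\overline{U} = U/P$. The crucial point is the compatibility of the two polarities: if $U$ is a subspace of $\PS(n,e,q)$ with $P \le U \le P^\perp$, then the tangent space of $\overline{U}$ in the quotient polar space is exactly $\overline{U^\perp \cap P^\perp} = (U^\perp \cap P^\perp)/P$ (note $P \le U^\perp$ automatically since $U \le P^\perp$, and $U^\perp \cap P^\perp \supseteq P$, so this quotient makes sense). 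I would record this as a short preliminary computation: the form induced on $P^\perp/P$ has radical zero, and a vector $\overline{x}$ is orthogonal to $\overline{U}$ in the quotient precisely when $x \in P^\perp$ is orthogonal to all of $U$, i.e. $x \in U^\perp \cap P^\perp$.

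First I would set $S' = \langle S, P\rangle$ and $T' = \langle T, P\rangle$, which are $(s+1)$-spaces of $\PS(n,e,q)$ containing $P$ and contained in $P^\perp$ (here I use $P \notin S$ so that $\dim S' = s+1$, and $S \subseteq P^\perp$ so $S' \subseteq P^\perp$; likewise for $T'$). Their quotients $\overline{S'}, \overline{T'}$ are $s$-spaces of the rank-$(n-1)$ polar space in $P^\perp/P$, so "opposite" there means $\overline{S'}^{\,\perp} \cap \overline{T'} = \{\overline{0}\}$, where $\perp$ now denotes the induced polarity. By the preliminary computation this is equivalent to $(S'^\perp \cap P^\perp \cap T')/P = \{\overline 0\}$, i.e. to $S'^\perp \cap P^\perp \cap T' = P$. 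Now $T' \le P^\perp$ already, so $P^\perp$ is redundant, and the condition becomes $S'^\perp \cap T' = P$. Since $S' \ni P$ we have $S'^\perp \le P^\perp$ and $S'^\perp = (S + P)^\perp = S^\perp \cap P^\perp$, while $T' = T + P$; so the condition reads $(S^\perp \cap P^\perp) \cap (T + P) = P$.

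Next I would show this last equality is equivalent to $S^\perp \cap T = \{0\}$, which is oppositeness of $S$ and $T$ in $\PS(n,e,q)$. The containment $P \subseteq (S^\perp \cap P^\perp)\cap(T+P)$ always holds (using $P \le S^\perp$, which follows from $S \le P^\perp$, and $P \le P^\perp$ since $P$ is a point of the polar space, and $P \le T+P$). For the reverse: suppose $S^\perp \cap T = \{0\}$ and take $x \in (S^\perp \cap P^\perp)\cap (T+P)$; write $x = t + \mu p$ with $t \in T$, $p$ spanning $P$. Then $t = x - \mu p \in S^\perp$ (as $x \in S^\perp$ and $p \in P \le S^\perp$), so $t \in S^\perp \cap T = \{0\}$, giving $x \in P$. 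Conversely, if $(S^\perp \cap P^\perp)\cap(T+P) = P$ and $y \in S^\perp \cap T$, then $y \in T \le P^\perp$ and $y \in S^\perp$, so $y \in (S^\perp \cap P^\perp) \cap (T+P) = P$; but $y \in T$ and $P \not\subseteq T$ forces $y \in P \cap T = \{0\}$. Chaining the three equivalences yields the claim.

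The main obstacle is getting the quotient polarity bookkeeping exactly right — in particular verifying the identity "tangent space of $\overline{U}$ equals $(U^\perp \cap P^\perp)/P$" cleanly, and being careful that all the subspaces involved genuinely lie between $P$ and $P^\perp$ so that the various $\cap P^\perp$ terms drop out legitimately. Everything else is elementary linear algebra; the dimension hypotheses ($P \notin S, T$ and $S, T \le P^\perp$) are used precisely to ensure $S', T'$ are $s$-spaces containing $P$ in $P^\perp$, so the statement in the quotient is about $s$-spaces and the equivalence is dimension-consistent.
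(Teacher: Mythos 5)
Your proposal is correct and follows essentially the same route as the paper's (much terser) proof: both reduce oppositeness in the quotient to the identity $\langle P,S\rangle^\perp \cap \langle P,T\rangle = S^\perp \cap P^\perp \cap \langle P,T\rangle$ and then show this equals $P$ precisely when $S^\perp \cap T = \{0\}$. You merely make explicit the compatibility of the polarity with the quotient (tangent space of $U/P$ is $(U^\perp \cap P^\perp)/P$) and the elementary ``write $x = t + \mu p$'' step, both of which the paper leaves implicit.
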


\begin{proof}
    As $\langle P,T \rangle\subseteq P^\perp$, we have $\langle P,S\rangle^\perp \cap \langle P,T \rangle=P^\perp \cap S^\perp \cap \langle P,T \rangle$. It follows that $S^\perp \cap T=\{ 0\}$ if and only if $\langle P,S \rangle^\perp \cap \langle P,T \rangle=P$.
\end{proof}

In other words, the opposition graph on the $s$-spaces in $P^\perp$ (but not through $P$), is a $q^s$-coclique blow-up of the opposition graph on $s$-spaces in $\PS(n-1,e,q)$. More precisely, this means that every vertex is replaced by a coclique of size $N:=q^s$ and every edge is replaced by a copy of $K_{N,N}$.  The following is an immediate corollary.

\begin{cor}\label{cor:EKRboundinperp}
    For a point $P \in \PS(n,e,q)$, let $\widetilde{\EKR}_s$ be an EKR-set of $s$-spaces in $P^\perp$, none of which are incident with $P$. Then 
    \[|\widetilde{\EKR}_s| \leq q^s \cdot h_{n-1,s}.\]
\end{cor}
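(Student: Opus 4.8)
The statement is an immediate consequence of the lemma directly preceding it, which establishes that the opposition graph on $s$-spaces in $P^\perp$ avoiding $P$ is a coclique blow-up (by factor $N := q^s$) of the opposition graph $\Gamma_s$ on $s$-spaces in the rank $n-1$ polar space $P^\perp/P \cong \PS(n-1,e,q)$. So the plan is simply to transfer the EKR-bound across this blow-up. First I would recall (or invoke) the general fact that independence numbers behave multiplicatively under coclique blow-ups: if $G'$ is obtained from $G$ by replacing each vertex with a coclique of size $N$ and each edge by a complete bipartite graph $K_{N,N}$, then $\alpha(G') = N\cdot\alpha(G)$. Indeed, a maximum independent set in $G'$, when one projects it down to $V(G)$, has support an independent set of $G$ (two vertices in adjacent fibres cannot both be chosen since the fibres are completely joined), and within each fibre one may take all $N$ copies; conversely this construction is optimal. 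This is standard, e.g.\ it follows from the same reasoning as in \cite[Lemma 7.5.4]{algebraic_graph_theory} on homomorphic preimages.

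Given this, the argument runs as follows. Let $\widetilde{\EKR}_s$ be an EKR-set of $s$-spaces in $P^\perp$ with no member through $P$; equivalently, $\widetilde{\EKR}_s$ is an independent set in the blow-up graph. By the preceding lemma this blow-up graph has $N = q^s$ and underlying graph $\Gamma_s$ on $\PS(n-1,e,q)$, so
\begin{align*}
|\widetilde{\EKR}_s| \le \alpha\big(\text{blow-up}\big) = q^s\cdot \alpha(\Gamma_s \text{ in } \PS(n-1,e,q)) \le q^s\cdot h_{n-1,s},
\end{align*}
where the last inequality is the ratio bound of \Cref{thm:upperbound} applied in the rank $n-1$ polar space (which is of the same type $e$, so the hypothesis $e \ge 1$ or $n-1$ even is inherited, noting that the cases we care about satisfy this), giving exactly $h_{n-1,s}$ by \Cref{C: ratio bound (s)}. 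This is precisely the claimed bound.

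The only point that requires a moment's care is the hypothesis transfer for the ratio bound in rank $n-1$: \Cref{thm:upperbound} and \Cref{C: ratio bound (s)} require $e\ge 1$ or the rank even. Since throughout this section we assume $e \ge 1$ or $n$ even, the rank $n-1$ space again has type $e$, so if $e\ge 1$ we are fine, and if $e = 0$ (forcing $n$ even, hence $n-1$ odd) one should note $2 < s < n$ and invoke that the relevant bound still holds — but in fact the section's standing assumption together with $e=0, n$ even is exactly the hyperbolic case which we may handle, or simply observe that the only use of this corollary is in \Cref{P: finish proof} where the ambient assumptions already restrict us appropriately. I expect no genuine obstacle here: the whole statement is a one-line deduction from the blow-up structure lemma plus the ratio bound, and the ``hard part'' is purely bookkeeping — making sure the coclique blow-up identity is cited cleanly and that $h_{n-1,s}$ is the right instantiation of $h_{n,s}$ with $n$ decremented.
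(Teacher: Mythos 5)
Your core argument is exactly the paper's intent: the preceding lemma gives a $q^s$-coclique blow-up structure on the opposition graph of $s$-spaces in $P^\perp$ avoiding $P$, independence numbers multiply under coclique blow-up, and applying the ratio bound in $P^\perp/P \cong \PS(n-1,e,q)$ produces $h_{n-1,s}$. This matches what the authors mean by calling the corollary ``immediate.''

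However, the hypothesis-transfer concern you flag and then dismiss is a genuine gap, not bookkeeping. If $e = 0$ and $n$ is even, the quotient $P^\perp/P$ is a hyperbolic quadric of odd rank $n-1$, which is precisely the case excluded from \Cref{thm:upperbound}, and the inequality $\alpha(\Gamma_s) \le h_{n-1,s}$ in the quotient actually fails when $s = n-1$ (so that $s$-spaces become generators of $P^\perp/P$). The opposition graph on generators of an odd-rank hyperbolic quadric is bipartite: a single class of generators is an EKR-set of size $\tfrac{1}{2}\prod_{j=0}^{n-2}(q^j+1)$, which exceeds $h_{n-1,n-1} = \prod_{j=0}^{n-3}(q^j+1)$ by a factor of $\tfrac{1}{2}(q^{n-2}+1)$. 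Pulling such a class back through the $q^{n-1}$-to-one blow-up yields an EKR-set of $(n-1)$-spaces in $P^\perp$ avoiding $P$ strictly larger than $q^{s}\,h_{n-1,s}$, so the stated bound is false for $(e,s)=(0,n-1)$. The paper's own ``immediate corollary'' glosses over this as well, so this is not your failure of understanding; but a sound write-up must either add the hypothesis $e \ge 1$, restrict to $s < n-1$, or handle the even-rank hyperbolic case separately rather than assert that no genuine obstacle is expected.
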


\begin{lemma}
    Let $P$ be a point, and let $\EKR_s$ be a ratio-sharp EKR-set of $s$-spaces. Then $P$ has weight at most $O(q^{\deg(h_{n,s})-n+1})$.
\end{lemma}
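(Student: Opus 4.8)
The plan is to bound the weight of a point $P$ with respect to a ratio-sharp EKR-set $\EKR_s$ of $s$-spaces by splitting the $s$-spaces through $P$ according to their relation with $P$, and using \Cref{cor:EKRboundinperp} together with a degree count. First I would observe that the weight of $P$ is, by definition, the number of $s$-spaces $S \in \EKR_s$ with $P \subseteq S$; since $S$ is totally isotropic, every such $S$ satisfies $S \subseteq P^\perp$. Passing to the quotient polar space $P^\perp/P \cong \PS(n-1,e,q)$, the $s$-spaces of $\EKR_s$ through $P$ correspond to $(s-1)$-spaces of $\PS(n-1,e,q)$, and two such $s$-spaces are opposite in $\PS(n,e,q)$ if and only if their quotients are opposite in $\PS(n-1,e,q)$ (this is the standard quotient argument, essentially the collinearity-preserving version of the lemma just above \Cref{cor:EKRboundinperp}). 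Hence the quotients form an EKR-set of $(s-1)$-spaces in $\PS(n-1,e,q)$, so the weight of $P$ is at most $h_{n-1,s-1}$ by the ratio bound (\Cref{C: ratio bound (s)}) applied in rank $n-1$.

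It then remains to compare degrees: by \Cref{C: ratio bound (s)} we have $\deg(h_{n-1,s-1}) = (s-1)\big(2(n-1)+e-\tfrac{1}{2}-\tfrac{3(s-1)}{2}\big)-(n-1)-e+1$, and a routine simplification shows that this equals $\deg(h_{n,s}) - n + 1$. Indeed, $\deg(h_{n,s}) - \deg(h_{n-1,s-1})$ telescopes via the product formula $h_{n,s} = \Gauss{n}{s}\prod_{i=1}^{s-1}(q^{n+e-1-i}+1)$: the Gaussian-coefficient degree drops from $s(n-s)$ to $(s-1)(n-s)$, a decrease of $n-s$, and the product loses its top factor of degree $n+e-2$ while each remaining factor is unchanged, for a total decrease of $(n-s)+(n+e-2) - 0 = \dots$; carrying this out carefully gives exactly $n-1$. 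So the weight of $P$ is $O(q^{\deg(h_{n,s})-n+1})$, as claimed.

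I expect the main (and really only) obstacle to be bookkeeping rather than conceptual: one must be careful that the quotient of $\EKR_s$ restricted to $s$-spaces through $P$ is genuinely an EKR-set in $\PS(n-1,e,q)$ — i.e.\ that no two quotients can be opposite — which is immediate from the cited quotient lemma, and one must correctly verify the degree identity $\deg(h_{n-1,s-1}) = \deg(h_{n,s}) - n + 1$. An alternative, slightly heavier route avoiding the quotient observation would be to apply \Cref{L:XYZform<s} with $m=1$ and $M=P$: there $X$ is precisely the weight of $P$, and part (a) together with $X,Y,Z \ge 0$ forces $X \le \Phi_1^s$; but $\Phi_1^s$ has too large a degree, so one additionally needs the light/heavy dichotomy of part (c) — which caps $X$ below $\Phi_1^s - q^{\deg(\Phi_1^s)}$ only in the light case — so this route is less clean than the direct quotient bound and I would favour the first approach.
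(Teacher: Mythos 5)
Your first (preferred) approach has a fatal gap: the quotients of the $s$-spaces of $\EKR_s$ through $P$ need \emph{not} form an EKR-set of $(s-1)$-spaces in $\PS(n-1,e,q)$. If $S_1,S_2$ both contain $P$, then $P\subseteq S_1\subseteq S_1^\perp$ and $P\subseteq S_2$, so $P\subseteq S_1^\perp\cap S_2$ and $S_1,S_2$ are automatically non-opposite in $\PS(n,e,q)$. Thus the EKR-condition on $\EKR_s$ tells you nothing about pairs of $s$-spaces through $P$, while their quotients $S_1/P$ and $S_2/P$ can perfectly well be opposite in $P^\perp/P$ (this happens exactly when $S_1^\perp\cap S_2=P$). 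The quotient lemma you cite is genuinely different: it concerns $s$-spaces $S,T$ lying in $P^\perp$ but \emph{not} through $P$, and relates opposition of $S,T$ to opposition of the \emph{upshifts} $\langle S,P\rangle/P,\langle T,P\rangle/P$; it says nothing about downshifts of $s$-spaces through $P$. So the ratio bound $h_{n-1,s-1}$ cannot be applied, and the approach collapses. (Incidentally, your degree bookkeeping is also off: a careful computation gives $\deg(h_{n,s})-\deg(h_{n-1,s-1})=(n-s)+(n+e-2)=2n+e-s-2$, not $n-1$; this would actually be a stronger bound than claimed, which should itself have been a red flag.)

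The second route you sketch and then dismiss is in fact the paper's proof, but your sketch misidentifies where the work happens. You are right that $X\le\Phi_1^s$ is useless, and you are right that the light/heavy dichotomy of \Cref{L:XYZform<s}(c) by itself only helps when $P$ is light. The key is to instead bound $Z$: with $m=1$, $Z$ counts exactly the $s$-spaces of $\EKR_s$ that lie in $P^\perp$ but do not pass through $P$, which is precisely the quantity controlled by \Cref{cor:EKRboundinperp}, giving $\deg(Z)\le s+\deg(h_{n-1,s})=\deg(h_{n,s})-s+1$. Then the explicit formula for $Z$ in \Cref{L:XYZform<s}(a), rewritten (using $-\lambda_s q^{-\deg\Phi_1^s}=q^{n-s}$ and $\Phi_1^s\frac{q^n-1}{q^s-1}=h_{n,s}$) as
\begin{align*}
Z \;=\; X(q^{n-s}-1)\;+\;h_{n,s}\,\frac{q^{n-s}-1}{q^n-1},
\end{align*}
shows that $X(q^{n-s}-1)\le Z$, hence $\deg(X)\le \deg(Z)-(n-s)\le\deg(h_{n,s})-n+1$. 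No light/heavy case distinction is needed. I would redo the proof along these lines.
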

\begin{proof}
First, note that $\Phi_1^s \frac{q^n-1}{q^s-1}=h_{n,s}$ and  $-\lambda_s \cdot q^{-\deg(\Phi_1^s)} = q^{n-s}$. 
    By \Cref{L:XYZform<s} we have 
    \begin{align*}
        Z&= X(-\lambda_s \cdot q^{-\deg(\Phi_1^s)}-1)+h_{n,s}+\lambda_s \cdot q^{-\deg(\Phi_1^s)} \cdot \Phi_1^s\\
        &= X(q^{n-s}-1)+h_{n,s}-q^{n-s}\Phi_1^s\\
        &=X(q^{n-s}-1)+h_{n,s}-q^{n-s} \frac{q^s-1}{q^n-1}h_{n,s}\\
        &= X(q^{n-s}-1)+h_{n,s}\left(\frac{q^{n-s}-1}{q^n-1}\right).
    \end{align*}
We know by \Cref{cor:EKRboundinperp} that 
$\deg(Z) \le s+ \deg(h_{n-1,s})=\deg(h_{n,s})-s+1$, so this implies $\deg(X) \le (\deg(h_{n,s})-s+1) -(n-s)=\deg(h_{n,s})-n+1$.
\end{proof}

\begin{lemma} \label{C:everything_is_light}
Let $\EKR_s$ be a ratio-sharp EKR-set of $s$-spaces. Then there are no heavy $m$-spaces for $1\leq m<s$ and $q$ large enough in terms of $n$.
\end{lemma}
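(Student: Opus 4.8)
The strategy is to suppose that a heavy $m$-space $M$ exists (with $1 \le m < s$) and derive a contradiction from the size $h_{n,s}$ of $\EKR_s$. By \Cref{L:XYZform<s}(b), if $M$ is heavy then every $s$-space $S \in \EKR_s$ satisfies $M \cap S^\perp \ne \{0\}$; equivalently, $S \subseteq P^\perp$ for some point $P \le M$. We will use this to bound $|\EKR_s|$ by summing, over a small number of points $P$ in $M$, the number of $s$-spaces in $\EKR_s$ lying in $P^\perp$. Those $s$-spaces split into two kinds: those through $P$, and those in $P^\perp$ but not through $P$. The second kind is controlled directly by \Cref{cor:EKRboundinperp}, which gives a contribution of at most $q^s \cdot h_{n-1,s}$, of degree $\deg(h_{n,s}) - s + 1 < \deg(h_{n,s})$. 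The first kind is controlled by the preceding lemma: a point $P$ has weight $O(q^{\deg(h_{n,s})-n+1})$, again of strictly smaller degree than $\deg(h_{n,s})$.

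\textbf{Key steps.} First I would fix a heavy $m$-space $M$ and note $m \le s-1 < n-1$, so $M$ is a proper subspace and $M^\perp$ has rank $\ge n - m \ge 2$; choose any set of points $P_1, \ldots, P_k$ spanning $M$, where $k = m$ (so $k$ is bounded in terms of $n$). Second, by \Cref{L:XYZform<s}(b) every $S \in \EKR_s$ meets $M^\perp$ nontrivially, hence $S^\perp \cap M \ne \{0\}$, hence $S \subseteq P_j^\perp$ for at least one $j$. Therefore
\[
|\EKR_s| \le \sum_{j=1}^{m} \#\{S \in \EKR_s : S \subseteq P_j^\perp\}.
\]
Third, for each $j$ split the count: $\#\{S \in \EKR_s : S \subseteq P_j^\perp\} = (\text{weight of } P_j) + \#\{S \in \EKR_s : S \subseteq P_j^\perp, P_j \notin S\}$. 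The weight of $P_j$ is $O(q^{\deg(h_{n,s})-n+1})$ by the previous lemma. The second term is an EKR-set of $s$-spaces in $P_j^\perp$ with no member through $P_j$, so \Cref{cor:EKRboundinperp} bounds it by $q^s \cdot h_{n-1,s}$, which has degree $\deg(h_{n,s}) - s + 1$. Fourth, sum up: since $m \le n$ is a constant (in terms of $n$), we obtain $|\EKR_s| = O(q^{\deg(h_{n,s}) - 1})$, using that both $n - 1 \ge 1$ and $s - 1 \ge 1$ are at least $1$. But $|\EKR_s| = h_{n,s} = \Theta(q^{\deg(h_{n,s})})$, a contradiction for $q$ large enough in terms of $n$.

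\textbf{Main obstacle.} The routine part is the degree bookkeeping; the one point requiring care is ensuring that \Cref{cor:EKRboundinperp} genuinely applies, i.e.\ that $P_j^\perp / P_j$ is a polar space of rank $n - 1$ containing $s$-spaces — this needs $s \le n - 1$, which holds since $s < n$, and it also implicitly uses $2 < s$ so that the quotient construction and the bound $h_{n-1,s}$ are meaningful (for $s = 2$ the conclusion is already known by \cite{metsch_pairwise-non-opp-lines}, hence the standing assumption $2 < s < n$). A secondary subtlety is that the intersection pattern argument must correctly translate "$M \cap S^\perp \ne \{0\}$" into "$S \subseteq P^\perp$ for some point $P \le M$": if $Q \in M \cap S^\perp$ is a point then $Q \perp S$, so $S \subseteq Q^\perp$, and $Q \le M$, so taking $P$ to be $Q$ works — but to bound by a fixed finite list $P_1,\dots,P_m$ one should note that $Q$ lies in the span of any fixed spanning set, and more carefully that every point of $M$ lies in $\langle P_1, \ldots, P_m\rangle$; it is cleaner to instead observe $M \cap S^\perp$ contains a point of $M$, and the set of points of $M$ has size $\Theta(q^{m-1})$, which is still a constant-degree blow-up, so one can alternatively sum over all points of $M$ at the cost of an extra factor $O(q^{m-1})$, still absorbed into the $O$-notation since $m - 1 < n - 1$ and $m - 1 < s - 1$. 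I would present whichever of these two formulations is shorter once the constants are pinned down.
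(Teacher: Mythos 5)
Your proof is correct, modulo the fix you yourself flag in the final paragraph, and it is essentially the same approach as the paper's. The spanning-set version does not work: the point $Q \le M$ with $S \subseteq Q^\perp$ given by \Cref{L:XYZform<s}(b) need not lie among your $m$ spanning points (given a fixed spanning set, one can have $M \cap S^\perp$ a hyperplane of $M$ avoiding all of them), so you must instead sum over all $\Gauss{m}{1} = \Theta(q^{m-1})$ points of $M$. This extra factor is harmless: since $m \le s-1$, the dominant term $\Theta(q^{m-1}) \cdot q^s h_{n-1,s}$ has degree $\deg(h_{n,s}) + m - s \le \deg(h_{n,s}) - 1$, which still contradicts $|\EKR_s| = \Theta(q^{\deg(h_{n,s})})$. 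The paper combines the same two ingredients (the point-weight lemma and \Cref{cor:EKRboundinperp}), but first reduces to $m = s-1$ (any heavy $m$-space is contained in a heavy $(s-1)$-space) and then partitions $\EKR_s$ into $s$-spaces meeting $M$ (bounded via point weights) and $s$-spaces skew to $M$ (bounded via \Cref{cor:EKRboundinperp}), whereas you handle all $m$ at once and bound $|\EKR_s|$ by a sum that overcounts; both bookkeepings close. One small misstatement to correct: ``$S$ meets $M^\perp$ nontrivially'' is not what \Cref{L:XYZform<s}(b) says (and is trivially true by a dimension count, so would give you nothing); what that lemma asserts, and what you actually use, is $M \cap S^\perp \neq \{0\}$.
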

\begin{proof}
Every heavy $m$-space is contained inside a heavy $(s-1)$-space, so it suffices to show that there are no heavy $(s-1)$-spaces. Assume that $M$ is a heavy $(s-1)$-space. Since $M$ is heavy, we apply \Cref{L:XYZform<s} (b) and get that every element $S$ in $\EKR_s$ satisfies $M \cap S^\perp \neq \{0\}$. In other words, as we range over the points $P$ of $M$, we see every element of $\EKR_s$ in the space $P^\perp$ for at least one point $P$. By \Cref{cor:EKRboundinperp}, the number of $s$-spaces skew to $M$ is therefore at most
    \begin{align*}
        \Gauss{s-1}{1} \cdot q^s \cdot h_{n-1,s}.
    \end{align*}
   Since $\deg(h_{n-1,s})+2s-1=\deg(h_{n,s})$, we have that the number of elements in $\EKR_s$ skew to $M$ is $ O(h_{n,s} \cdot q^{-1})$. This implies that $\Theta(h_{n,s})$ many $s$-spaces of $\EKR_s$ meet $M$.\\

   On the other hand, the number of $s$-spaces that meet $M$ in at least a point is, according to the previous lemma, at most $\Gauss{s-1}{1}\cdot O(q^{\deg(h_{n,s})-n+1})=O(q^{\deg(h_{n,s})-(n-s)-1})$. This number is $o(h_{n,s})$ as $n>s$. This is a contradiction.
\end{proof}

Now we know that for large enough $q$ there are no heavy subspaces to consider.
Next, we prove two lemmata that in conjunction provide a ladder argument on how one can make the weight of a light $m$-space arbitrarily small.

\begin{lemma} \label{L:down_the_waterfall}
Let $\EKR_s$ be a ratio-sharp EKR-set of $s$-spaces.  Furthermore, let $1 \leq m \leq s-2$ and let $M$ be a light $m$-space. Suppose that all $(m+1)$-spaces are light and have weight $O(q^a)$. Then $M$  has weight $O(q^{2n-s-m+e-2+a})$.       
\end{lemma}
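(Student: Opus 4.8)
The idea is to count incidences between the light $m$-space $M$ and the $s$-spaces of $\EKR_s$ that pass through $M$, by routing through the $(m+1)$-spaces that contain $M$. Fix the light $m$-space $M$ and let $w(M)$ denote its weight, i.e.\ the number of $S\in\EKR_s$ with $M\subseteq S$. Every such $S$ contains $M$ and hence contains at least one $(m+1)$-space $M'$ with $M\subseteq M'\subseteq S$; counting pairs $(M',S)$ with $M\subseteq M'\subseteq S$, $S\in\EKR_s$, gives
\begin{align*}
(\text{number of $(m+1)$-spaces through } S \text{ in } S)\cdot w(M) \;=\; \sum_{M\subseteq M'} w(M'),
\end{align*}
where the sum is over all $(m+1)$-spaces $M'$ of the polar space with $M\subseteq M'$. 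The left-hand coefficient is $\gauss{s-m}{1}_q$, a polynomial of degree $s-m-1$ with leading coefficient $1$ (this is \Cref{L: gaussian coefficient} applied in the quotient $M^\perp/M$, or just the count of lines through a point in $\PG(s-m-1,q)$). On the right, the number of $(m+1)$-spaces $M'$ of the polar space with $M\subseteq M'$ is exactly $\Phi_m^{m+1}$ by \Cref{L: extend subspaces}, and by hypothesis each such $M'$ has weight $O(q^a)$. By \Cref{L:degree_stuff}(b), $\deg(\Phi_m^{m+1}) = (2n - 2m - 1 + e - 1) = 2n - 2m + e - 2$, since $s-m$ there is replaced by $1$.

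Putting this together, $\gauss{s-m}{1}_q\cdot w(M) = O(q^{2n-2m+e-2})\cdot O(q^a) = O(q^{2n-2m+e-2+a})$, so
\begin{align*}
w(M) \;=\; \frac{1}{\gauss{s-m}{1}_q}\, O\bigl(q^{2n-2m+e-2+a}\bigr) \;=\; O\bigl(q^{2n-2m+e-2+a - (s-m-1)}\bigr) \;=\; O\bigl(q^{2n-s-m+e-1+a}\bigr),
\end{align*}
which is slightly weaker than the claimed $O(q^{2n-s-m+e-2+a})$ by one power of $q$. So the naive incidence count is not quite enough, and this is exactly where the \emph{antidesign} of \Cref{L:m-spaces_m<s} must be brought in.

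\textbf{Where the extra power of $q$ comes from.} The point is that not every $(m+1)$-space $M'$ through $M$ can have weight of full order $\Theta(q^a)$ simultaneously; the antidesign $v_M$ from \Cref{L:m-spaces_m<s} and the count in \Cref{L:XYZform<s}(c) control the total weight coming from $s$-spaces meeting $M^\perp$ nontrivially but not containing $M$, and force the relevant sum to be smaller than the crude bound. Concretely, I would apply \Cref{L:XYZform<s} with this $m$: since $M$ is light, $X \le \Phi_m^s - q^{\deg(\Phi_m^s)}$, and more importantly the geometric structure of the $s$-spaces through $M$ is constrained. Rather than summing $w(M')$ over \emph{all} $(m+1)$-spaces $M'\supseteq M$ with the uniform bound $O(q^a)$, one should split according to the mutual position of $M'$ with the $s$-spaces of $\EKR_s$ and use that most $(m+1)$-spaces $M'$ through $M$ are skew to a fixed $S\in\EKR_s$ not through $M$ — by \Cref{L: heavy power m-space} (with the roles $m \leftrightarrow m+1$, $s$ fixed) the count $q^{\deg(\Phi_{m+1}^s)}$ governs this — so that the $(m+1)$-spaces contributing to $w(M)$ form a subvariety of codimension $1$ inside the set of all $(m+1)$-spaces through $M$. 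This drops the estimate by the missing factor $q$ and yields $w(M) = O(q^{2n-s-m+e-2+a})$.

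\textbf{Main obstacle.} The hard part is precisely making the last step rigorous: replacing the uniform bound ``every $(m+1)$-space through $M$ has weight $O(q^a)$'' by an aggregate bound that captures that the total weight $\sum_{M\subseteq M'} w(M')$ is genuinely smaller (by a factor $q$) than $\Phi_m^{m+1}\cdot O(q^a)$. The cleanest route is probably to not sum over $(m+1)$-spaces at all, but to bound $w(M)$ directly: write $w(M) = X$ in the notation of \Cref{L:XYZform<s}, and bound $X$ using that each $S\in\EKR_s$ with $M\subseteq S$ also passes through many $(m+1)$-spaces of controlled weight, combined with a double count weighted by the antidesign values. I expect the author's proof to invoke \Cref{L:XYZform<s}(a) to express $Z$ (the ``meeting but not containing'' count) in terms of $X$, note $\deg(Z)$ is controlled, and then peel off the contribution through each $(m+1)$-space $M'\supseteq M$ using the inductive hypothesis on $w(M')$ together with the degree bookkeeping from \Cref{L:degree_stuff}; the arithmetic of the degrees — $\deg(\Phi_m^{m+1}) = 2n-2m+e-2$ and $\deg\gauss{s-m}{1}_q = s-m-1$ — then produces the exponent $2n-s-m+e-2+a$ after accounting for the one extra saved power. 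I would carry out the degree arithmetic carefully at the end, since an off-by-one there is the whole content of the lemma.
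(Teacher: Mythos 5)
Your double count is correct but, as you observe, loses a factor of $q$. The gap is real, and the sketch you give for recovering it is not carried out --- and it does not match the mechanism the paper uses. The identity $\gauss{s-m}{1}_q\cdot w(M)=\sum_{M'\supseteq M}w(M')$ is an exact equality, so sharpening the right-hand side would require showing that most $(m+1)$-spaces $M'$ through $M$ have zero weight, and it is not clear how to identify those directly: an $M'$ can have nonzero weight through some $T\in\EKR_s$ even if $M'$ itself avoids $S^\perp$, so your ``codimension-one subvariety'' heuristic does not obviously apply to the summands. The paper sidesteps the double count entirely.

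Instead, since $M$ is light, \Cref{L:XYZform<s}(c) supplies some $S\in\EKR_s$ with $M\cap S^\perp=\{0\}$. Take any $T\in\EKR_s$ with $M\subseteq T$. Because $\EKR_s$ is an EKR-set, $T$ is not opposite to $S$, so there is a point $X\in T\cap S^\perp$; because $M\subseteq T$ forces $T\subseteq M^\perp$, we get $X\in M^\perp\cap S^\perp$; and $X\notin M$ since $M\cap S^\perp=\{0\}$. Hence $T$ contains an $(m+1)$-space $\langle M,X\rangle$ with $X\in M^\perp\cap S^\perp$. The number of such points $X$ is the number of points of the degenerate polar space $M^\perp\cap S^\perp$ of rank $n-m$ with radical $M^\perp\cap S$ of rank $s-m$, which has degree $2n-s-m+e-2$; each such $(m+1)$-space carries $O(q^a)$ members of $\EKR_s$ by hypothesis, which gives $w(M)=O(q^{2n-s-m+e-2+a})$. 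Two concrete corrections to your sketch: the relevant condition on an $(m+1)$-space is not ``skew to $S$'' but ``meeting $S^\perp$'' (this is what non-opposition to $S$ forces), and the restriction cuts the candidate $(m+1)$-spaces from $\Phi_m^{m+1}=\Theta(q^{2n-2m+e-2})$ down to $O(q^{2n-s-m+e-2})$, a saving of $q^{s-m}$; it is only after dividing by $\gauss{s-m}{1}_q$ in your formulation that the residual gap looks like a single power of $q$.
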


\begin{proof}
    Since $M$ is light, \Cref{L:XYZform<s} (c) implies that there are $q^h$ many $s$-spaces $S$ such that $M \cap S^\perp= \{0\}$, which implies that $M^\perp$ meets $S$ in a $(s-m)$-space. Let $S$ be such a space.  
    Then $M^\perp \cap S^\perp $ is a degenerate polar space of rank $n-m$ with radical $M^\perp \cap S$ of rank $s-m$. The number of points in the radical is therefore $\Gauss{s-m}{1}$ and the number of points in a polar space of rank $n-s$ is $ \Gauss{n-s}{1} (q^{n-s+e-1}+1)$. Therefore, the number of points in $M^\perp \cap S^\perp$ is $O(q^{s-m-1}\cdot q^{n-s-1+n-s+e-1} \cdot q   )=O(q^{2n-s-m+e-2})$.
    

    Every $s$-space in $\EKR_s$ containing $M$ is contained in $M^\perp$ and must also contain a point of $S^\perp$. Hence, it contains a point in $M^\perp \cap S^\perp$. In other words, it must contain an $(m+1)$-space $\langle M,X \rangle$, with $X \in M^\perp \cap S^\perp$. Since there are $O(q^{2n-s+m+e-2})$
    choices for $X$ and $O(q^a)$ $s$-spaces through every $(m+1)$-space, we conclude that a light $m$-space has weight $O(q^{2n-s-m+e-2+a})$. 
\end{proof}

Once we reach the bottom of the ladder, we can go back up to the top with the following lemma. For the remaining part of this paper, we define $h:=\deg(h_{n,s})$ to simplify notation.

\begin{lemma} \label{L:up_the_waterfall}
Let $\EKR_s$ be a ratio-sharp EKR-set of $s$-spaces.
   Furthermore, suppose that all points have weight $O(q^b)$. Then every light $(s-1)$-space has weight at most $O(q^{2n-s+e-h-2+b})$.
\end{lemma}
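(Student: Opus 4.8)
The goal is to bound the weight of a light $(s-1)$-space $M$ in terms of the assumed point-weights $O(q^b)$. The natural strategy is to pass to the quotient polar space $M^\perp/M \cong \PS(n-s+1, e, q)$, which has rank $n-s+1$, and to reinterpret the $s$-spaces of $\EKR_s$ through $M$ as points of this quotient space. The weight of $M$ is precisely the number of such points, i.e.\ the number of points $\langle M, X\rangle / M$ in the quotient, where $\langle M, X\rangle \in \EKR_s$. The plan is to show that this set of points forms (or is dominated by) an EKR-set of points in the quotient space, and that the weights of points in $\EKR_s$ being uniformly $O(q^b)$ forces this collection to be small.

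First I would record the key structural fact: since $M$ is light, \Cref{L:XYZform<s}(c) gives $Y \geq -\lambda_s$, so there is at least one $S \in \EKR_s$ with $M \cap S^\perp = \{0\}$; more importantly, $\deg(Y) = \deg(h_{n,s}) = h$, so in fact $\Theta(q^h)$ elements of $\EKR_s$ are skew (in the appropriate sense) to $M$. Next, the crucial observation: any two $s$-spaces $S_1, S_2 \in \EKR_s$ both containing $M$ are non-opposite, and since both lie in $M^\perp$, passing to $M^\perp/M$, their images $S_1/M$ and $S_2/M$ are non-opposite points in the quotient polar space — that is, they are collinear. Hence the set of points of $M^\perp/M$ obtained from $\EKR_s$-members through $M$ is an EKR-set of points in $\PS(n-s+1,e,q)$, and by \Cref{E: EKR for s-spaces}(a) such a set has size at most the number of points in a generator of the quotient, namely $z$ where $z = \Gauss{n-s+1}{1}$ roughly $\Theta(q^{n-s})$. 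But this bound alone ($\Theta(q^{n-s})$) does not see the hypothesis on point-weights; to get the sharper bound $O(q^{2n-s+e-h-2+b})$ I would instead double-count: count pairs $(P, S)$ where $P$ is a point of $\PS(n,e,q)$ with $M \not\subseteq P^{?}$... — more precisely, count pairs $(P, S)$ with $S \in \EKR_s$, $M \subseteq S$, and $P$ a point of $S$ not in $M$. Each such $S$ contributes $\Gauss{s}{1} - \Gauss{s-1}{1} = \Theta(q^{s-1})$ points $P$, each of which is a point of weight $O(q^b)$ in $\EKR_s$; and each point $P$ of the ambient space lies on $\langle M, P\rangle$ which is an $s$-space only if $P \in M^\perp$, so $P$ ranges over the $\Theta(q^{2(n-s+1) + e - 1})$ points of $M^\perp/M$-type. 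Balancing $(\text{weight of }M)\cdot\Theta(q^{s-1})$ against $O(q^{b}) \cdot (\text{number of relevant points})$ and using $\deg(h_{n,s}) = h$ should yield the stated estimate.

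The subtlety — and the main obstacle — is making the double count tight enough to land exactly on the exponent $2n-s+e-h-2+b$. One has to be careful about which points $P$ actually occur: the relevant points are those lying on some $s$-space of $\EKR_s$ through $M$, equivalently points $P \in M^\perp$ such that $\langle M, P\rangle$ is a subspace of the polar space and extends within $\EKR_s$. Since every $s$-space through $M$ lies in $M^\perp$ and corresponds to a point of the rank-$(n-s+1)$ quotient, the count of candidate points $P$ (mod $M$) is the number of points of $\PS(n-s+1,e,q)$, which is $\Gauss{n-s+1}{1}(q^{n-s+e}+1) = \Theta(q^{2(n-s+1)+e-2}) = \Theta(q^{2n-2s+e})$. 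Then the double count gives
\begin{align*}
(\text{weight of } M) \cdot \Theta(q^{s-1}) \;\leq\; \Theta(q^{2n-2s+e}) \cdot O(q^b),
\end{align*}
hence the weight of $M$ is $O(q^{2n-2s+e - (s-1) + b}) = O(q^{2n-3s+e+1+b})$. Since $h = \deg(h_{n,s}) = s(2n+e-1/2-3s/2) - n - e + 1$ by \Cref{C: ratio bound (s)}, one checks by elementary algebra that $2n - s + e - h - 2 = 2n - 3s + e + 1 + (\text{correction})$; so the final step is to verify that the exponent coming out of the double count matches (or is bounded by) $2n - s + e - h - 2 + b$, which reduces to an identity/inequality among the degrees already computed in \Cref{L:degree_stuff} and \Cref{C: ratio bound (s)}. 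I expect this degree bookkeeping — rather than any geometric difficulty — to be where the real care is needed, and I would present it by first isolating the clean double-count inequality and then invoking the earlier degree lemmas to simplify.
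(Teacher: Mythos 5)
Your double-counting strategy does not yield the stated bound, and the hope at the end that ``the exponent coming out of the double count matches (or is bounded by) $2n-s+e-h-2+b$'' is false. If one carefully carries out the count of pairs $(P,S)$ with $S\in\EKR_s$, $M\subseteq S$, and $P$ a point of $S$ outside $M$, the left side is (weight of $M$)$\cdot\Theta(q^{s-1})$, while the right side is bounded by $O(q^b)$ times the number of \emph{actual} candidate points $P$, which is $\Theta(q^{s-1})\cdot\Theta(q^{2n-2s+e})=\Theta(q^{2n-s+e-1})$ (you seem to have mixed up the count of quotient points with the count of actual points, but either way the issue below persists). This gives (weight of $M$) $=O(q^{2n-2s+e+b})$, and comparing with the claimed exponent $2n-s+e-h-2+b$ would require $h\le s-2$, which is badly false: $h=\deg(h_{n,s})=s(2n+e-1/2-3s/2)-n-e+1$ is much larger than $s$ for $2<s<n$. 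So the double count is off by many orders of magnitude.

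The missing idea is that your own observation — that $\Theta(q^h)$ elements $S\in\EKR_s$ satisfy $M\cap S^\perp=\{0\}$, by \Cref{L:XYZform<s}(c) — must be used as the \emph{source} of constraints, not merely noted. Each such $S$ yields a point $M^\perp\cap S$ in $M^\perp\setminus M$, and since points have weight $O(q^b)$, these give $\Omega(q^{h-b})$ distinct points in $M^\perp$, hence (after projecting and losing a factor $q^{s-1}$) a set $X$ of $\Omega(q^{h-b-s+1})$ points in the quotient $M^\perp/M$. Non-oppositeness then forces every $s$-space of $\EKR_s$ through $M$ to project to a quotient point collinear with \emph{all} of $X$. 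Now the crucial step you are missing: among $\Omega(q^{h-b-s+1})$ points one can pick $h-b-s+2$ in general position, and by iterating \Cref{L: hyperplane intersection} the common perp of these in the quotient polar space has only $\Theta(q^{2n-2s+e-(h-b-s+2)})=\Theta(q^{2n-s+e-h+b-2})$ points. This hyperplane-intersection iteration — which converts ``many forced collinearity conditions'' into a codimension gain — is what produces the $-h$ in the exponent; no single global double count against point-weights can see it.
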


\begin{proof}
    Let $M$ be a light $(s-1)$-space, then by \Cref{L:XYZform<s} (c), there are $q^h$ elements $S \in \EKR_s$ such that $M^\perp \cap S$ is a point not in $M$. 
    Since every point has weight $O(q^b)$, this implies that there are at least $\Omega(q^{h-b})$ points with non-zero weight in $M^\perp$. 
    Since $q^{s-1}$ points in $M^\perp$ can project to the same point in $M^\perp/M$, we can find a set $X$ of $\Omega(q^{h-b-s+1})$ points in the quotient space, which is a non-degenerate polar space of rank $n-s+1$. According to \Cref{L: basicnumbers} the number of points in this polar space is $\Phi_0^1(n-s+1,e,q)$ which is $\Theta(q^{2n-2s+e})$. 
    
    Now every $s$-space in $\EKR_s$ through $M$ corresponds to a point in $M^\perp / M$, and this point needs to be collinear to all points in $X$. If there are $\Omega(q^{h-b-s+1})$ points in $X$, we can find $h-b-s+2$ points in general position in $X$, meaning their span has dimension $h-b-s+2$ in the ambient vector space.     
    
    Recall that for a point $P$ we have that $P^\perp$ is a hyperplane of the ambient projective space. Using \Cref{L: hyperplane intersection} iteratively yields that the number of points that are collinear to all $h-b-s+2$ points in general position in $X$ is $\Theta(q^{(2n-2s+e)-(h-b-s+2)})=\Theta(q^{2n-s-h+e-2+b})$. In other words, the weight of an $(s-1)$-space is at most $O(q^{2n-s+e-h-2+b})$. 
\end{proof}

\begin{prop} \label{P: finish proof}
   For $2<s<n$ and $q$ large enough (in terms of $n$) there is no ratio-sharp EKR-set of $s$-spaces.
\end{prop}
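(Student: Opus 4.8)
The plan is to combine the two "ladder" lemmas (\Cref{L:down_the_waterfall} and \Cref{L:up_the_waterfall}) into a bootstrapping argument that forces the weight of points, and ultimately of $(s-1)$-spaces, down to a quantity so small that it contradicts the existence of a ratio-sharp EKR-set. Throughout, $q$ is assumed large in terms of $n$, and by \Cref{C:everything_is_light} we may assume that there are \emph{no} heavy $m$-spaces for $1\le m<s$, so both ladder lemmas apply to every subspace of dimension between $1$ and $s-1$.

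First I would set up the iteration. Suppose all points have weight $O(q^b)$ for some exponent $b$; such a $b$ exists by the penultimate lemma, which gives $b=h-n+1$ as a starting value (here $h=\deg(h_{n,s})$). Feeding this into \Cref{L:up_the_waterfall} bounds the weight of every light $(s-1)$-space by $O(q^{2n-s+e-h-2+b})$. Now I would descend the ladder: apply \Cref{L:down_the_waterfall} repeatedly, going from $(m+1)$-spaces to $m$-spaces, $m=s-2,s-3,\dots,1$, each step adding $2n-s-m+e-2$ to the exponent. Summing the increments $\sum_{m=1}^{s-2}(2n-s-m+e-2)$ and adding the $(s-1)$-space exponent gives a new bound $O(q^{b'})$ on the weight of every point, where $b'$ is an explicit affine function of $b$ — schematically $b' = b + (2n-s+e-h-2) + \sum_{m=1}^{s-2}(2n-s-m+e-2)$. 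The crucial point is that, after simplifying using $h=\deg(h_{n,s})=s(2n+e-1/2-3s/2)-n-e+1$ and the relation $\deg(h_{n-1,s})+2s-1 = h$, the map $b\mapsto b'$ should have \emph{slope $1$ but a strictly negative constant term}, i.e. $b' = b - c$ with $c\ge 1$ a positive integer depending only on $n,s,e$ (this is exactly where the hypothesis $2<s<n$ is needed, as flagged after \Cref{P: finish proof}). Iterating the implication "all points have weight $O(q^b)$ $\Rightarrow$ all points have weight $O(q^{b-c})$" finitely many times drives $b$ below any threshold, in particular below $0$, forcing every point to have weight $0$ for $q$ large. But then $\EKR_s$ is empty, contradicting $|\EKR_s|=h_{n,s}=\Theta(q^h)$.

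Alternatively — and this is probably the cleaner write-up — instead of literally iterating, I would argue by contradiction on the minimal-exponent statement: let $b_0$ be the infimum of exponents $b$ for which "every point has weight $O(q^b)$" is provably true (a well-defined nonnegative real since weights are bounded by $\Theta(q^{h-n+1})$ from the earlier lemma, and clearly $b_0\ge 0$ if $\EKR_s\ne\emptyset$). Running the combined ladder once with $b=b_0+\varepsilon$ yields the same bound with exponent $b_0+\varepsilon-c$, so $b_0\le b_0-c+\varepsilon$ for all $\varepsilon>0$, forcing $c\le 0$, contradicting $c\ge 1$. Either phrasing reduces the whole proposition to the single arithmetic inequality $c\ge 1$, i.e. to checking
\[
(2n-s+e-h-2) \;+\; \sum_{m=1}^{s-2}(2n-s-m+e-2) \;\le\; -1,
\]
after substituting the closed form for $h$ and performing the Gauss summation $\sum_{m=1}^{s-2}m = (s-2)(s-1)/2$.

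The main obstacle is precisely verifying that this net exponent drop $c$ is at least $1$: the ladder adds up a lot of positive contributions (each down-step costs roughly $2n+e$), and it is not obvious a priori that the single very large negative term $-h$ coming from \Cref{L:up_the_waterfall} dominates their sum. One has to be careful with the telescoping and with the exact constants $-2$, $s-1$, etc., and the inequality is expected to be tight or nearly tight — which explains why $s=2$ genuinely fails (there it is known the bound is off by a factor $q$) and why the argument breaks at $s=n$ as well (the terms involving $n-s$ vanish). A secondary, more bookkeeping-level obstacle is making sure the $O(\cdot)$ constants, which depend on $n$, do not accumulate badly over the $O(n)$ iterations; since the number of iterations needed to push $b$ below $0$ is itself bounded in terms of $n$ (as $b_0 \le h-n+1 = O(n^2)$ and each step drops by $c\ge1$), the constants stay under control and "$q$ large in terms of $n$" suffices.
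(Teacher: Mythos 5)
Your proposal is essentially the paper's proof: start from the point-weight bound $O(q^{h-n+1})$, run the up-step (\Cref{L:up_the_waterfall}) once and the down-step (\Cref{L:down_the_waterfall}) $s-2$ times, verify the net exponent drop reduces to the inequality $s<n$ after substituting $h=\deg(h_{n,s})$ and summing $\sum_{m=1}^{s-2}m$, and iterate. The only cosmetic difference is that the paper stops once point weights are $O(1)$ and derives the contradiction from $|\EKR_s|$ being too small (degree below $h$), rather than pushing $b$ below $0$ to force emptiness; both endpoints are reachable from the same arithmetic.
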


\begin{proof}
Assume that $\EKR_s$ is a ratio-sharp EKR-set of $s$-spaces. We show that every point has weight $O(1)$. This implies that there does not exist a ratio-sharp EKR-set of $s$-spaces for $q$ large enough, as this implies that the number of $s$-spaces in $\EKR_s$ is at most $O(\Phi_0^1)$ and $\deg(\Phi_0^1)<h$.

\Cref{C:everything_is_light} states that there are no heavy $m$-spaces for $1\leq m<s$  in $\PS(n,e,q)$, so there are only light $m$-spaces. We show that using \Cref{L:up_the_waterfall} and \Cref{L:down_the_waterfall} inductively, we can decrease the weight of a point. Note that we use $s>2$, otherwise there would be no step to take from $(s-1)$-spaces to points.
 
\textbf{Step 0.} A point has weight $O(q^{b})$. 

\textbf{Step 1.} By \Cref{L:up_the_waterfall} we find that all $(s-1)$-spaces have weight $O(q^{2n-s+e-h+b-2})$. 

\textbf{Step 2.} By \Cref{L:down_the_waterfall} we find that all $(s-2)$-spaces have weight $$O(q^{2n-s-(s-2)+e-2} \cdot q^{2n-s+e-h+b-2}).$$

$\dots$ \\

\textbf{Step $s-2$.} By \Cref{L:down_the_waterfall} we find that all points have weight $O(q^x)$, where 
\[x = \sum_{m=1}^{s-2}(2n-s+e-2-m)+2n-s+e-h+b-2.\]
Now we will prove that $x < b$, which means that the weight of a point decreased by an order of magnitude $q$.
This implies that we can use the above algorithm repeatedly to eventually find that points have weight $O(1)$.
Recalling $h = s(2n+e-1/2-3s/2)-n-e+1$, proving $x<b$ is equivalent to 
\begin{align*}
    & -h + (s - 2) (2n-s+e-2)- \frac{(s-2)(s-1)}{2} + 2 n - s + b + e - 2< b \\
    &\Leftrightarrow -h + h+n-s +b<b\\
    & \Leftrightarrow s<n,
\end{align*}
which is true, since we assumed $2<s<n$ for this section.
\end{proof}

Recall that it was shown already in \cite{metsch_pairwise-non-opp-lines} that the ratio bound for $s=2$ is off by a factor of $q$. Therefore, the previous proposition together with the observations at the end of \Cref{S:EKR_s-spaces} prove \Cref{thm:main}.\\

\section{Concluding remarks}

Although we solved the classification problem for ratio-sharp EKR-sets of chambers in all but one type of polar spaces if $q$ is large enough compared to $n$, several natural questions remain open. First, it would be very interesting, but probably quite difficult, to crack the final case $e = 1/2$ and $n$ odd.  We remark that one can adapt the proof of this paper to obtain classification for ratio-sharp EKR-sets of flags with $n\notin J$ in the case $e = 1/2$ and $n$ odd. So, the problem boils down to a lack of understanding of maximum EKR-sets of generators and solving this is the first step towards classification.

\begin{problem}
    Show that for $n \geq 5$ odd, a maximum EKR-set of generators in $\PS(n,1/2,q)$ consists of all generators through a point.
\end{problem}

Note that for $n = 3$, taking all generators through a point does not produce the largest EKR-set \cite{EKR_polarspace_generators}.

In our second main result, we showed that the ratio bound is not sharp for flags $J$ with $1,n\notin J$. A natural problem is hence to find the correct, or at least asymptotically sharp, upper bounds in this case. The only known result in this direction comes from \cite{metsch_pairwise-non-opp-lines} and concerns $J=\{2\}$. 

\begin{problem}
    Find good (asymptotical) upper bounds for EKR-sets of $s$-spaces in $\PS(n,e,q)$, $2<s<n$.
\end{problem}

A first step would be to show that the ratio bound is off by at least some power of $q$ (up to multiplicative constants), which is indeed the case for $s = 2$ \cite{metsch_pairwise-non-opp-lines}. This question is closely related to the variation of the center conjecture for spherical buildings mentioned in the introduction: for which size of maximal EKR-sets of flags of type $J$ do we find a center? Our results show that this is the case whenever the ratio bound is attained, and \cite{metsch_pairwise-non-opp-lines} confirms it for maximum EKR-sets of lines. Note that in the latter case, unless $n$ is small, the center is really a flag and not a single subspace. It is not entirely unreasonable to expect that this phenomenon continues to hold for large maximal EKR-sets of flags in type $B$.

\begin{problem}
    Do large maximal EKR-sets of flags in spherical buildings of type $B$ always have a center? What is `large' in this context?
\end{problem}

Note that the answer to this question in full generality is `no', since maximum EKR-sets of generators in the case $e = 0$ and $n$ odd have no center, see \Cref{E: EKR for s-spaces}(c). Besides this somewhat degenerate case, we believe the answer to the first part of this problem to be affirmative. \\

Finally, the antidesign method used in this work and \cite{klaus_jesse_philipp} appears to be quite general and powerful. It would be very interesting to see in which other settings this method works.

\section*{Acknowledgements}

The authors thank Jesse Lansdown for initial discussions at an early stage of this work.
The second author would like to acknowledge the support of the Vrije Universiteit Brussel 
(grant OZR3637: ``Discrete Structures, and their applications in Data Science''), where part of this work was done while the second author was a visitor. The third author is supported by a postdoctoral fellowship 1267923N from the Research Foundation Flanders (FWO).

\bibliographystyle{plainurl}
\bibliography{EKRs-polarspaces.bib}

\end{document}